\newtheorem{theorem}{Theorem}[section]
\newtheorem{proposition}[theorem]{Proposition}
\newtheorem{corollary}[theorem]{Corollary}
\newtheorem{lemma}[theorem]{Lemma}
\theoremstyle{definition}
\newtheorem{remark}[theorem]{Remark}
\numberwithin{equation}{section}
\def\eps{\varepsilon}
\def\R{\mathbb{R}}
\def\supp{\mathrm{supp\,}}
\def\dist{\mathrm{dist}}
\def\hbar{\bar{h}}
\def\le{\leqslant}
\def\ge{\geqslant}
\newcommand{\bu}{\mathbf{u}}
\newcommand{\id}{{\rm Id}}
\def\pa{\partial}
\newcommand{\rst}[1]{\ensuremath{{\mathbin |}%
\raise-.5ex\hbox{$#1$}}}
\newcommand{\mf}[1]{\mathbf{#1}}
\let\div\relax
\DeclareMathOperator{\div}{div}
\definecolor{verde}{rgb}{0,0.35,0.1} 
\definecolor{rosso}{rgb}{0.7,0,0}
\definecolor{blue}{rgb}{0,0,1}
\definecolor{viola}{rgb}{0.6,0,0.4}
\definecolor{grigio}{rgb}{0.5,0.5,0.5}
\author[N. Soave]{Nicola Soave}\thanks{}
\address{Nicola Soave \newline \indent
Dipartimento di Matematica,  Politecnico di Milano,  \newline \indent
Via Edoardo Bonardi 9, 20133 Milano, Italy}
\email{nicola.soave@gmail.com; nicola.soave@polimi.it}
\author[H. Tavares]{Hugo Tavares}\thanks{}
\address{Hugo Tavares \newline \indent CAMGSD (Center for Mathematical Analysis, Geometry and Dynamical Systems) \newline\indent
Departamento de Matem\'atica, Instituto Superior T\'ecnico, Universidade de Lisboa\newline \indent Av. Rovisco Pais, 1049-001 Lisboa, Portugal\newline
\indent {\small{and}} \newline \indent Departamento de Matem\'atica, Faculdade de Ci\^encias da Universidade de Lisboa \newline \indent Edif\'icio C6, Piso 1, Campo Grande 1749-016  Lisboa, Portugal}
\email{htavares@math.ist.utl.pt; hrtavares@ciencias.ulisboa.pt}
\author[S. Terracini]{Susanna Terracini}\thanks{}
\address{Susanna Terracini \newline \indent
 Dipartimento di Matematica ``Giuseppe Peano'', Universit\`a di Torino, \newline \indent
Via Carlo Alberto, 10,
10123 Torino, Italy}
\email{susanna.terracini@unito.it}
\author[A. Zilio]{Alessandro Zilio}\thanks{}
\address{Alessandro Zilio \newline \indent  \'{E}cole des Hautes \'{E}tudes en Sciences Sociales, PSL Research University Paris, Centre d'analyse et \indent de math\'{e}matique sociales (CAMS), CNRS, \newline \indent 190-198 Avenue de France, 75244, Paris CEDEX 13, France}
\email{azilio@ehess.fr, alessandro.zilio@polimi.it}
\title{Variational problems with long-range interaction}
\keywords{Free boundary condition, Lipschitz regularity, Long-range interaction, Optimal partition problems, Segregation problems, Variational methods}
\date{\today}
\begin{document}
\maketitle

\begin{abstract}
We consider a class of variational problems for densities that repel each other at distance. Typical examples are given by the Dirichlet functional and the Rayleigh functional
\[
	D(\mf{u}) =  \sum_{i=1}^k \int_{\Omega} |\nabla u_i|^2 \quad \text{or} \quad R(\mf{u}) =  \sum_{i=1}^k \frac{\int_{\Omega} |\nabla u_i|^2}{\int_{\Omega} u_i^2}
\]
minimized in the class of $H^1(\Omega,\R^k)$ functions attaining some boundary conditions on $\partial \Omega$, and subjected to the constraint  
\[
	\dist (\{u_i > 0\}, \{u_j > 0\}) \ge 1 \qquad \forall i \neq j.
\]
For these problems, we investigate the optimal regularity of the solutions, prove a free-boundary condition, and derive some preliminary results characterizing the free boundary $\partial \{\sum_{i=1}^k u_i > 0\}$.
\end{abstract}

\section{Introduction}

The object of this paper is the study of a class of minimal configurations for variational problems involving arbitrarily many densities related by long-range repulsive interactions. The mathematical setting we consider is described by the following two archetypical situations.

\medskip 

\noindent \textbf{Problem (A)} Let $\Omega$ be a bounded domain of $\R^N$, $N \ge 2$, and let 
\[
\Omega_1=\bigcup_{x \in \Omega} B_1(x)=\{x\in \R^N:\ \dist (x,\Omega)<1\}.
\] 
Given $k \ge 2$ nonnegative nontrivial functions $f_1,\ldots, f_k\in H^1(\Omega_1)\cap C(\overline\Omega_1)$ satisfying
\footnote{Here and in the rest of the paper, the distance between two sets $A$ and $B$ is understood as
\begin{equation}\label{def dist}
\dist(A,B):= \inf\{ |x-y|: \ x \in A, \ y \in B\}.
\end{equation}
}
\[
\dist(\supp f_i,\supp f_j)\ge 1\qquad \forall i\neq j, 
\]
we consider the minimization problem 
\[
\inf_{\mf{u}\in H_\infty} J_\infty(\mf{u}),
\] where the set $H_\infty$ and the functional $J_\infty$ are defined by
\begin{equation}\label{def H infty}
H_\infty=\left\{\mf{u} = (u_1,\dots,u_k) \in H^1(\Omega_1,\R^k)\left| \begin{array}{l} \dist(\supp u_i,\supp u_j)\ge 1\quad  \forall i\neq j \\ 
u_i = f_i \ \text{a.e. in $\Omega_1 \setminus \Omega$}
\end{array}\right. \right\},
\end{equation}
and
 \begin{equation}\label{def J infty}
J_\infty(\bu)=\sum_{i=1}^k \int_\Omega |\nabla u_i|^2.
\end{equation}
The support of each component $u_i$ is taken in the weak sense: it corresponds to the complement in $\Omega_1$ of the largest open set $\omega\subseteq \R^N$ where $u_i=0$ a.e. on $\omega$ (cf. \cite[Proposition 4.17]{Brezis_book_2011}). Notice also that the existence of $f_1,\dots,f_k$ with the above properties imposes some conditions on $\Omega$ (for instance, the diameter of $\Omega$ cannot be too small), and we suppose that such conditions are satisfied.

We are interested in existence and qualitative properties of minimizers.

\medskip

\noindent \textbf{Problem (B)} Let $\Omega$ be a bounded domain of $\R^N$, $N \ge 2$, and let $k \ge 2$. We consider the \emph{set of open partitions of $\Omega$ at distance $1$}, defined as
\[
	\mathcal{P}_{k}(\Omega) = \left\{(\omega_1, \dots, \omega_k) \left| \begin{array}{l} \text{$\omega_i\subset \Omega$ is open and non-empty for every $i$}, \\
	\text{and $\dist(\omega_i, \omega_j) \ge 1 \quad \forall i \neq j$}
\end{array}\right. \right\}.
\]
Then, for a cost function $F \in \mathcal{C}^1((\R^+)^k, \R)$ satisfying
\begin{itemize}
	\item $\partial_i F(x) > 0$ for all $x \in (\R^+)^k$ and $i=1, \dots, k$, which in particular yields that $F$ is component-wise increasing;
	\item for any given $i = 1, \dots, k$,
	\[
		\lim_{x_i \to +\infty} F(\bar x_1, \dots,\bar x_{i-1},x_i, \bar x_{i+1} \dots, \bar x_k ) = +\infty
	\]
	for all $(\bar x_1, \dots,\bar x_{i-1}, \bar x_{i+1} \dots, \bar x_k ) \in (\R^+)^{k-1}$,
\end{itemize}
we consider the minimization problem
\begin{equation}\label{eqn part dist}
	\inf_{ (\omega_1, \dots, \omega_k) \in \mathcal{P}_{k}(\Omega) } F(\lambda_1(\omega_1), \dots, \lambda_1(\omega_k)),
\end{equation}
where $\lambda_1(\omega)$ is the first eigenvalue of the Laplace operator in $\omega$ with homogeneous Dirichlet boundary conditions. Problem \eqref{eqn part dist} is a particular case of an \emph{optimal partition problem} (cf. \cite{BourdinBucurOudet, BucurButtazzoHenrot}). A typical case we have in mind is the cost function $F(\lambda_1(\omega_1), \ldots, \lambda_1(\omega_k))=\sum_{i=1}^k \lambda_1(\omega_i)$.

We are interested in existence and qualitative properties of an optimal partition.

\medskip

Our main results are, for problem (A):
\begin{itemize}
\item the existence of a minimizer;
\item the optimal interior regularity of any minimizer;
\item the derivation of several properties of the positivity sets $\{u_i>0\}$;
\item the derivation of a free boundary condition involving the normal derivatives of different components of any minimizers on the regular part of the free-boundary $\pa \{u_i>0\}$.
\end{itemize}
For problem (B):
\begin{itemize}
\item the introduction of a weak formulation in terms of densities, and the existence of weak solutions;
\item the global optimal regularity of any weak solution, which leads in particular to the existence of a strong solution for the original problem;
\item the derivation of properties of the subsets $\omega_i$, and of a free boundary condition on the regular part of $\pa \omega_i$.
\end{itemize}
In a forthcoming paper, we will study more in detail the regularity of the free-boundary.
 
We stress that, both in problems (A) and (B), the interaction among different densities takes place at distance: in problem (A) the positivity sets $\{u_i>0\}$, and in problem (B) the open subsets $\omega_i$, are indeed forced to stay at a fixed minimal distance from each other. 

\medskip

When the interaction among the densities takes place point-wisely, segregation problems analogue to (A) and (B) have been studied intensively, in connection with optimal partition problems for Laplacian eigenvalues \cite{ctv2002, CoTeVe2003, CTV_fucik, CaffLin2007, rtt, ttAIHP, tt}, with the regularity theory of harmonic maps into singular manifold \cite{CaffLin_2008, CoTeVe05, tt}, and with segregation phenomena for systems of elliptic equations arising in quantum mechanics driven by strong competition \cite{CaffLin_2008, DaWaZh, NoTaTeVe, SoTaTeZi, SoZi, SoZi2, WeiWeth}. 

In contrast, the only results available so far regarding segregation problems driven by long-range competition are given in \cite{CaPaQu}, where the authors analyze the spatial segregation for systems of type
\begin{equation}\label{syst p}
\begin{cases}
-\Delta u_{i,\beta} = -\beta u_{i,\beta}\sum_{j \neq i} (\mathds{1}_{B_1} \star |u_j|^p)  & \text{in $\Omega$} \\
u_{i,\beta} = f_i \geq 0 & \text{in $\Omega_1 \setminus \Omega$},
\end{cases}
\end{equation}
with $1 \le p \le +\infty$. In the above equation, $\mathds{1}_{B_1}$ denotes the characteristic function of $B_1$, the ball\footnote{We denote by $B_r(x)$ the ball of center $x$ and radius $r$ in $\R^N$. In case $x=0$, we simply write $B_r$.} of center $0$ and radius $1$, and $\star$ stays for the convolution for $p<+\infty$, so that
\[
(\mathds{1}_{B_1} \star |u_j|^p)(x) = \int_{B_1(x)} |u_j(y)|^p\,dy \qquad \forall x \in \Omega, \ \text{with } 1 \le p < +\infty;
\]
in case $p=+\infty$, we intend that the integral is replaced by the supremum over $B_1(x)$ of $|u_j|$. In \cite{CaPaQu}, the authors prove the equi-continuity of families of viscosity solutions $\{\mf{u}_\beta: \beta > 0\}$ to \eqref{syst p}, the local uniform convergence to a limit configuration $\mf{u}$, and then study the free-boundary regularity of the positivity sets $\{u_i>0\}$ in cases $p=1$ and $p=+\infty$, mostly in dimension $N=2$. As we shall see, our problem (A) is strictly related with the asymptotic study of the solutions to \eqref{syst p} in case $p=2$ (see the forthcoming Theorem \ref{prop:existence}); nevertheless, also in such a situation our approach is very different with respect to the one in \cite{CaPaQu}, since we heavily rely on the variational nature of the problem. This  gives differenti free boundary conditions which requires different techniques, and allows us to prove new results. 

Regarding problem \eqref{syst p}, we also refer to \cite{Bozo}, where the author proves uniqueness results in the cases $p=1$ and $p=+\infty$.

\subsection{Main results} We adopt the notation previously introduced. First of all, we have the following existence results for problems (A) and (B).

\begin{theorem}[Problem (A)]\label{thm: existence}
There exists a minimizer $\mf{u}=(u_1,\ldots, u_k)$ for $\inf_{H_\infty} J_\infty$.
\end{theorem}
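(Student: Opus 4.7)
The proof is a direct method of the calculus of variations, and the essential content is to check that the nonconvex distance constraint $\dist(\supp u_i,\supp u_j)\ge 1$ survives weak $H^1$ convergence.

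First I would verify that $H_\infty$ is nonempty and $J_\infty$ is bounded below on it: the tuple $\mf{f}=(f_1,\dots,f_k)$ is admissible by hypothesis, so $\inf_{H_\infty}J_\infty\in[0,J_\infty(\mf{f})]$. Then I would pick a minimizing sequence $\mf{u}_n=(u_1^n,\dots,u_k^n)\in H_\infty$. Since $u_i^n-f_i\in H^1_0(\Omega)$ (extended by $0$ to $\Omega_1\setminus\Omega$) and $\|\nabla u_i^n\|_{L^2(\Omega)}$ is bounded, Poincar\'e's inequality gives a uniform $H^1(\Omega_1)$ bound. Passing to a subsequence, $u_i^n\rightharpoonup u_i$ weakly in $H^1(\Omega_1)$, strongly in $L^2(\Omega_1)$, and a.e.\ in $\Omega_1$. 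The boundary condition $u_i=f_i$ a.e.\ on $\Omega_1\setminus\Omega$ is preserved under a.e.\ convergence, and the weak lower semicontinuity of $\mf{v}\mapsto\sum_i\int_\Omega|\nabla v_i|^2$ yields $J_\infty(\mf{u})\le\liminf_n J_\infty(\mf{u}_n)$. Thus once $\mf{u}\in H_\infty$ is established, $\mf{u}$ is a minimizer.

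The main (and only nontrivial) point is therefore that the limit $\mf{u}$ satisfies $\dist(\supp u_i,\supp u_j)\ge 1$ for every $i\neq j$. Fix $i\neq j$, take arbitrary $x\in\supp u_i$ and $y\in\supp u_j$, and suppose towards a contradiction that $|x-y|<1$. Choose $r>0$ so small that $|x'-y'|<1$ for all $x'\in B_r(x)$, $y'\in B_r(y)$. By the weak definition of support (the complement of the largest open set on which $u_i\equiv 0$ a.e.), $u_i$ is not a.e.\ zero on $B_r(x)$, so $\int_{B_r(x)}u_i^2>0$. The strong $L^2$ convergence then gives $\int_{B_r(x)}(u_i^n)^2>0$ for all large $n$, and hence $\supp u_i^n\cap B_r(x)\neq\emptyset$, since otherwise $u_i^n$ would vanish a.e.\ on an open subset of $B_r(x)$, contradicting the definition of the support. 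Pick $x_n\in\supp u_i^n\cap B_r(x)$ and analogously $y_n\in\supp u_j^n\cap B_r(y)$; by construction $|x_n-y_n|<1$, contradicting $\dist(\supp u_i^n,\supp u_j^n)\ge 1$. Hence $|x-y|\ge 1$ and $\mf{u}\in H_\infty$.

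The obstacle I expect is precisely the interplay between the \emph{weak} notion of support and pointwise/$L^2$ convergence: supports are inherently not closed under weak $H^1$ limits (components may create new supports at the limit, and old supports may thin out). The proof above handles the only relevant direction—passing the distance inequality to the limit—by exploiting that the distance condition prevents supports from collapsing in the problematic way, via the $L^2$-strong convergence on small balls. Once this is in place, the remaining steps are the standard direct method.
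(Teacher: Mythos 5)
Your proof is correct, but it follows a genuinely different route from the paper. You apply the direct method to the constrained problem itself: take a minimizing sequence in $H_\infty$, extract a weak $H^1$ limit, and show that the constraint $\dist(\supp u_i,\supp u_j)\ge 1$ is closed under this convergence via a localization argument on small balls (if $x\in\supp u_i$ then $\int_{B_r(x)}u_i^2>0$, which propagates back to the approximating sequence by strong $L^2$ convergence and forces the supports of $\mf{u}_n$ to come within distance $1$ of each other). This is a clean, self-contained argument, and the ball argument correctly handles the weak notion of support, which is indeed the only delicate point. The paper instead proves Theorem \ref{thm: existence} as a corollary of Theorem \ref{prop:existence}: it penalizes the constraint by adding the interaction term $\beta\iint\mathds{1}_{B_1}(x-y)u_i^2(x)u_j^2(y)$, minimizes the unconstrained functional $J_\beta$ over the larger set $H$, and sends $\beta\to+\infty$; membership of the limit in $H_\infty$ is read off from the vanishing of the interaction integrals (using the equivalent integral characterization of $H_\infty$), and minimality follows from $c_\beta\le c_\infty$ together with weak lower semicontinuity. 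What the penalization buys, beyond existence, is the link between problem (A) and the competition--diffusion system \eqref{syst p} with $p=2$: the approximating minimizers solve \eqref{eq:system}, converge strongly in $H^1$, and this asymptotic picture is one of the stated aims of the paper. Your approach buys economy (no auxiliary $\beta$-problem, no Euler--Lagrange equation needed) at the cost of losing that approximation result. One minor technical remark: to get the uniform $H^1(\Omega_1)$ bound you invoke $u_i^n-f_i\in H^1_0(\Omega)$, which for a rough $\partial\Omega$ is not automatic from vanishing a.e.\ outside $\Omega$; it is safer to apply the Poincar\'e inequality for $H^1(\Omega_1)$ functions vanishing on a set of positive measure (here $\Omega_1\setminus\Omega$), which is all your argument actually needs.
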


\begin{theorem}[Problem (B)]\label{thm: existence2}
There exists a minimizer $(\omega_1,\ldots, \omega_k)\in \mathcal{P}_k$ for \eqref{eqn part dist}.
\end{theorem}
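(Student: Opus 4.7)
The plan is to lift Problem (B) to a density formulation in the spirit of Problem (A) --- minimizing a Rayleigh-type functional over nonnegative $H^1_0(\Omega)$ components whose supports remain at mutual distance at least $1$ --- and then to recover an open partition from the limiting densities. Given a minimizing sequence $(\omega_1^n,\ldots,\omega_k^n)\in \mathcal{P}_k(\Omega)$, let $\phi_i^n$ denote the positive, $L^2$-normalized first Dirichlet eigenfunction on $\omega_i^n$, extended by zero to $\Omega$. Then $\phi_i^n\in H^1_0(\Omega)$ with $\int_\Omega |\nabla \phi_i^n|^2=\lambda_1(\omega_i^n)$, and the inclusion $\supp \phi_i^n\subset \overline{\omega_i^n}$ yields $\dist(\supp \phi_i^n,\supp \phi_j^n)\ge 1$ for $i\ne j$.

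First I would extract a priori bounds: the coercivity of $F$ in each variable, combined with its componentwise monotonicity, ensures that boundedness of $F(\lambda_1(\omega_1^n),\ldots,\lambda_1(\omega_k^n))$ forces each $\lambda_1(\omega_i^n)$ to stay bounded. Hence $(\phi_i^n)$ is bounded in $H^1_0(\Omega)$, and up to a subsequence $\phi_i^n\rightharpoonup \phi_i$ weakly in $H^1_0$, strongly in $L^2$, with $\phi_i\ge 0$, $\|\phi_i\|_{L^2}=1$ (so $\phi_i\not\equiv 0$), and $\lambda_1(\omega_i^n)\to \ell_i\in [0,+\infty)$.

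Next I would show that the distance constraint survives in the limit. The supports $K_i^n:=\supp \phi_i^n$ are compact subsets of $\overline{\Omega}$, hence, up to a further subsequence (Blaschke selection), converge in the Hausdorff metric to compact sets $K_i$. For any $a\in K_i$ and $b\in K_j$ one finds sequences $a_n\in K_i^n$, $b_n\in K_j^n$ with $a_n\to a$, $b_n\to b$, yielding $|a-b|\ge 1$, so $\dist(K_i,K_j)\ge 1$. Strong $L^2$ convergence then forces $\supp \phi_i\subset K_i$: for any open $U$ with $\overline{U}\cap K_i=\emptyset$, eventually $\overline{U}\cap K_i^n=\emptyset$, so $\phi_i^n\equiv 0$ on $U$, and the same holds for $\phi_i$ in the limit. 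Consequently $\dist(\supp \phi_i,\supp \phi_j)\ge 1$.

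Finally I would recover an admissible partition by taking $\omega_i$ to be a (quasi-)open representative of $\{\phi_i>0\}$, contained in $\supp \phi_i\subset K_i$; this preserves the distance constraint, and non-emptiness follows from the $L^2$-normalization. The variational characterization of $\lambda_1$ together with weak lower semicontinuity gives $\lambda_1(\omega_i)\le \int_\Omega|\nabla \phi_i|^2\le \ell_i$, so by componentwise monotonicity of $F$
\[
F(\lambda_1(\omega_1),\ldots,\lambda_1(\omega_k))\le F(\ell_1,\ldots,\ell_k)=\inf_{\mathcal{P}_k(\Omega)} F.
\]
The main obstacle is the last step: ensuring that $\omega_i$ is genuinely \emph{open}, not merely quasi-open. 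This is precisely where the weak-to-strong transfer outlined in the introduction comes in: once the Lipschitz regularity of minimizing densities is established elsewhere in the paper, the positivity sets $\{\phi_i>0\}$ are automatically open, thereby upgrading the weak minimizer produced above to an actual partition in $\mathcal{P}_k(\Omega)$.
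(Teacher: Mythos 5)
Your construction up to the last step is sound, and it takes a genuinely different route from the paper: you run the direct method on a minimizing sequence of \emph{partitions} via their normalized first eigenfunctions, and you preserve the distance constraint through Hausdorff convergence of the supports, whereas the paper works with the density level $c=\inf_{H_\infty}J$ from the start, obtains a minimizer of $c$ by a $\beta$-penalization scheme (Theorem \ref{thm eig existence}, the analogue of Theorem \ref{prop:existence}, where the constraint passes to the limit by Fatou applied to $\iint \mathds{1}_{B_1}(x-y)u_i^2(x)u_j^2(y)$), and only afterwards identifies $c$ with $\inf_{\mathcal{P}_k}F$. Your a priori bound from coercivity (using $\lambda_1(\omega_i^n)\ge\lambda_1(\Omega)>0$ and componentwise monotonicity) and the Hausdorff argument for $\dist(\supp\phi_i,\supp\phi_j)\ge 1$ are both correct.

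The genuine gap is in the final step. The Lipschitz regularity you invoke (Theorem \ref{thm: lip general} together with the structure of the sets $C_i,D_i,A_i$ in Section \ref{sec:Properties_of_minimizers}) is established for \emph{minimizers of $J$ over $H_\infty$}, i.e.\ for the level $c$; in particular the key step that $u_i$ solves the equation in $D_i$ (Lemma \ref{lem: harmonicity}) uses minimality in $H_\infty$. Your limit $\boldsymbol{\phi}$ satisfies only $c\le J(\boldsymbol{\phi})\le \inf_{\mathcal{P}_k}F$, and since a priori $c$ could be strictly smaller than $\inf_{\mathcal{P}_k}F$, you cannot conclude that $\boldsymbol{\phi}$ minimizes $c$ — that identity is itself a \emph{consequence} of the regularity of minimizers of $c$, so the argument as written is circular. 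Nor does $\boldsymbol{\phi}$ obviously solve the eigenvalue equation on the relevant open sets, since Hausdorff convergence of $\supp\phi_i^n$ does not give convergence of the open sets $\omega_i^n$ from the inside. The repair is to apply the direct method (or the paper's penalization) to the density level $c$ itself: take a minimizing sequence in $H_\infty$, pass the constraint to the limit by Fatou, use weak lower semicontinuity and the monotonicity of $F$ to get a genuine minimizer $\mf{u}$ of $c$; the regularity theory then applies to $\mf{u}$, the sets $\{u_i>0\}$ are open and admissible, and $\inf_{\mathcal{P}_k}F\le F(\lambda_1(\{u_1>0\}),\dots)\le J(\mf{u})=c\le\inf_{\mathcal{P}_k}F$ closes the loop. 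Once this is done, your eigenfunction construction is no longer needed.
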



Observe that, to each optimal partition $(\omega_1,\ldots, \omega_k)$, we can associate a vector of signed first eigenfunctions. To fix ideas, from now on we always consider nonnegative eigenfunctions. The second part of our analysis concerns the properties satisfied by any minimizer of problems (A) and (B).

\begin{theorem}\label{thm: properties minimizers}
Let $\bu=(u_1,\ldots, u_k)$ be either any minimizer of $J_\infty$ in $H_\infty$, or a vector of first eigenfunctions associated to an optimal partition $(\omega_1,\ldots, \omega_k)$ of \eqref{eqn part dist}. Then $\bu$ is a vector of nonnegative functions in $\Omega$, and denoting by $S_i$ the positivity set $\{x\in \Omega:\ u_i>0\}$, for every $i=1,\dots,k$, we have:
\begin{enumerate}
\item \label{p1} \emph{Subsolution in $\Omega$}: We have that 
\begin{itemize}
\item[]$-\Delta u_i  \le 0$ in distributional sense in $\Omega$, if $\mf{u}$ is a solution to problem (A),
\item[]$-\Delta u_i  \le \lambda_1(\omega_1)u_i$ in distributional sense in $\Omega$, if $\mf{u}$ is a solution to problem (B).
\end{itemize}
\item \label{p2} \emph{Solution in $S_i$:} We have that  
\begin{itemize}
\item[]$-\Delta u_i=0$ in $\textrm{int}(S_i)$, if $\mf{u}$ is a solution to problem (A),
\item[]$-\Delta u_i=\lambda_1(\omega_i)$ in $\textrm{int}(S_i)$, if $\mf{u}$ is a solution to problem (B).
\end{itemize}
\item \label{p3} \emph{Exterior sphere condition for the positivity sets:} $S_i$ satisfies the \emph{$1$-uniform exterior sphere condition in $\Omega$}, in the following sense: for every $x_0 \in \pa S_i \cap \Omega$ there exists a ball $B$ with radius $1$ which is exterior to $S_i$ and tangent to $S_i$ at $x_0$, i.e.
\[
S_i \cap B = \emptyset \quad \text{and} \quad x_0 \in \overline{S_i} \cap \overline{B}.
\]
Moreover, in $B \cap B_1(x_0)$ we have $u_j \equiv 0$ for every $j =1,\dots,k$ (including $j=i$).
\item \label{p4} \emph{Lipschitz continuity:} $u_i$ is Lipschitz continuous in $\Omega$, and in particular $S_i$ is an open set, for every $i$.
\item \label{p6} \emph{Lebesgue measure of the free-boundary:} the free-boundary $\pa \{u_i>0\}$ has zero Lebesgue measure, and its Hausdorff dimension is strictly smaller than $N$.
\item \label{p5} \emph{Exact distance between the supports:} for every $x_0 \in \pa S_i \cap \Omega$ there exists $j \neq i$ such that 
\[
\overline{B_1(x_0)} \cap \pa \, \supp u_j  \neq \emptyset.
\] 
\end{enumerate}
\end{theorem}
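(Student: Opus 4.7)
The plan is to deduce the properties in an order that reuses earlier ones. A preliminary observation: one may assume $u_i \ge 0$, since replacing $u_i$ by $|u_i|$ preserves the Dirichlet energy (resp.\ Rayleigh quotient), the positivity set, and the boundary data (as $f_i \ge 0$); for problem (B), first Dirichlet eigenfunctions are taken nonnegative by convention. I will then derive \eqref{p1} and \eqref{p2} from direct variational arguments. For \eqref{p2}, for $\varphi \in C_c^\infty(\interior(S_i))$, the perturbation $u_i + \varepsilon\varphi$ still has support contained in $\supp u_i$, so the distance constraint is preserved, and the first-order condition gives the PDE on $\interior(S_i)$ (for problem (B), combined with the characterization of first Dirichlet eigenfunctions on $\omega_i$). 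For \eqref{p1}, for $\varphi \in C_c^\infty(\Omega)$ with $\varphi \ge 0$, the truncation $(u_i - \varepsilon\varphi)_+$ has support in $\supp u_i$, hence is admissible; expanding the energy inequality and letting $\varepsilon \to 0^+$ yields $\int_\Omega \nabla u_i \cdot \nabla \varphi \le 0$ (plus a lower-order term in problem (B)), which is the required subharmonicity.

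For \eqref{p3}, I argue by contradiction: if $\dist(x_0, S_j) > 1$ for every $j \neq i$, then for sufficiently small $r > 0$, $B_r(x_0) \subset \Omega$ and $\dist(B_r(x_0), S_j) > 1$ for each $j \neq i$. Replacing $u_i$ on $B_r(x_0)$ by its harmonic extension (nonnegative by the maximum principle) gives an admissible competitor of strictly lower energy, since $u_i$ vanishes on a nontrivial open subset of $B_r(x_0)$ and is therefore not harmonic there. Hence $\dist(x_0, S_j) = 1$ for some $j \neq i$, and compactness provides $z \in \overline{S_j}$ with $|z - x_0| = 1$. The distance constraint (with a small approximation if $z \in \partial S_j$) then forces $B_1(z) \cap S_i = \emptyset$, so $B := B_1(z)$ is the required exterior tangent ball at $x_0$, and the moreover clause is immediate from the constraint. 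Property \eqref{p5} follows from the same construction: were $z \in S_j$, nearby points of $S_j$ would be at distance strictly less than $1$ from $x_0 \in \overline{S_i}$, so necessarily $z \in \partial \supp u_j \cap \overline{B_1(x_0)}$.

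Property \eqref{p4} is the main technical step. Starting from a global $L^\infty$ bound on $u_i$ (from the weak maximum principle and the boundary data for problem (A), and a standard normalization of the first eigenfunction for problem (B)), at every free-boundary point $x_0$ I use the exterior tangent ball $B_1(z)$ from \eqref{p3} and compare $u_i$ in the annulus $B_2(z) \setminus \overline{B_1(z)}$ with the radial harmonic barrier $w(x) = \|u_i\|_{L^\infty} h(|x - z|)$, where $h$ is harmonic in the annulus with $h(1) = 0$ and $h(2) = 1$. The subsolution property \eqref{p1} combined with the weak maximum principle then yields $u_i(x) \le C|x - x_0|$ near $x_0$, with $C$ uniform in $x_0$. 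Coupling this linear growth at the free boundary with interior gradient estimates for harmonic functions (resp.\ for $-\Delta u = \lambda_1(\omega_i) u$) applied on balls $B_{\dist(x, \partial S_i)/2}(x) \subset S_i$ yields the global Lipschitz bound. The hard part is the uniformity of $C$ across boundary points and the verification that the barrier dominates $u_i$ on the whole annulus; whenever the annulus exits $\Omega$ or meets another $S_j$, I will need a suitably localized version of the comparison.

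Finally \eqref{p6} follows from \eqref{p3}: the exterior ball condition gives a form of one-sided porosity of $\partial S_i$, since for every $x_0 \in \partial S_i$ and every $r \in (0,1)$, the ball $B_1(z)$ contains a ball of radius comparable to $r$ inside $B_r(x_0) \setminus \overline{S_i}$, hence disjoint from $\partial S_i$ (as $\overline{S_i} \cap B_1(z) = \emptyset$). By classical results on porous sets, $\partial S_i$ then has Hausdorff dimension strictly less than $N$, and in particular zero Lebesgue measure.
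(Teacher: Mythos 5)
Your overall strategy is the same as the paper's (variational identities for \eqref{p1}--\eqref{p2}, an exterior tangent ball from the distance constraint, a barrier plus interior gradient estimates for the Lipschitz bound, porosity for the dimension statement, harmonic replacement for the exact-distance property), but the step you yourself defer as ``the hard part'' is a genuine gap, and it is precisely where the paper's specific construction is needed. Your barrier is $w=\|u_i\|_{\infty}\,h(|x-z|)$ on the annulus $B_2(z)\setminus\overline{B_1(z)}$, built on the \emph{full} exterior ball $B_1(z)$. Since the outer radius is $2$, you must localize the comparison to $(B_2(z)\setminus\overline{B_1(z)})\cap B_\rho(x_0)$ (the annulus leaves $\Omega$, and for problem (A) the subsolution property holds only inside $\Omega$). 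But on the lateral boundary $\pa B_\rho(x_0)\setminus B_1(z)$ the inequality $u_i\le w$ is not verified near the tangential set $\pa B_\rho(x_0)\cap \pa B_1(z)$: there $|x-z|\downarrow 1$, so $w\downarrow 0$, while nothing prevents $u_i$ from being positive at such points, which lie outside the exterior ball and arbitrarily close to $x_0\in\pa S_i$. The paper's barrier lemma is designed exactly to fix this: the comparison function is the Kelvin transform $w_{r/2}^*$ built on the \emph{half-radius} ball internally tangent to the exterior ball at $x_0$, so that the set where $u_i$ can be positive is uniformly separated (by a $\delta(r,\rho,N)>0$) from the zero set of the barrier on $\pa B_\rho(x_0)$, yielding the lower bound $m(r,\rho,N)>0$ there; moreover $-\Delta w_{r/2}^*$ is bounded below by a positive constant, which absorbs the right-hand side $\lambda_1(\omega_i)u_i$ in problem (B) — your purely harmonic $h$ does neither. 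Without such a device the linear growth estimate at the free boundary, and hence \eqref{p4}, is not established.

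A second, more structural issue is circularity in the choice of representative. Before continuity is proved, $S_i$, $\interior(S_i)$ and $\pa S_i$ depend on the representative of the $H^1$ class, so your arguments for \eqref{p2}, \eqref{p3} and \eqref{p5}, which quantify over $x_0\in\pa S_i$ and perturb inside $\interior(S_i)$, are not yet well posed; in addition, your Lipschitz argument needs an open set on which $u_i$ solves the equation and outside of which $u_i=0$ a.e., and $\interior(S_i)$ has the latter property only once $|\pa S_i|=0$ is known — which in your ordering comes after \eqref{p4}. The paper breaks this circle by working first with the continuous functions $\Phi_i(x)=\int_{B_1(x)}u_i^2$ and the canonical open sets $C_i$, $D_i=\interior(\Omega\setminus C_i)$ and $A_i$: harmonicity on $D_i$, the exterior sphere property of $D_i$ and the porosity of $\pa D_i$ follow from the constraint and the definitions alone, the Lipschitz theorem is applied to $A_i$, and only afterwards is $S_i$ identified with $A_i$. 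Your treatment of \eqref{p1}, \eqref{p6} and \eqref{p5} is otherwise sound, and deriving the exterior ball from the exact-distance property (rather than, as in the paper, from the constraint alone via $\{\Phi_i=0\}$) is a legitimate variant once the representative issue is settled.
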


Notice that, if $y_0\in \pa S_j$ is such that  $|x_0-y_0|=1$, then $B_1(y_0)$ is an exterior sphere to $S_i$ at $x_0$. Moreover, by the Hopf lemma, the interior Lipschitz regularity is optimal.

Regarding the regularity of a vector of eigenfunctions $\mf{u}$ of problem (B), if we ask that $\Omega$ satisfies the exterior sphere condition,  then we have actually a stronger statement.

\begin{theorem}\label{thm: improved reg}
Let $\bu$ be a vector of first eigenfunctions associated to an optimal partition $(\omega_1,\ldots, \omega_k)$ of \eqref{eqn part dist}. Assume that $\Omega$ satisfies the exterior sphere condition with radius $r>0$. Then $\bu$ is globally Lipschitz continuous in $\overline{\Omega}$. 
\end{theorem}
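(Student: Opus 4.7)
The plan is to upgrade the interior Lipschitz regularity supplied by Theorem~\ref{thm: properties minimizers}(4) into a genuinely global Lipschitz bound on $\overline\Omega$ by constructing, at every $x_0 \in \partial \Omega$, a classical barrier that dominates $u_i$ and decays linearly from $\partial \Omega$. The uniform exterior sphere condition is exactly what guarantees that such a barrier can be built with a constant independent of the choice of $x_0$.

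I first collect the relevant facts at my disposal. By Theorem~\ref{thm: properties minimizers}, each $u_i$ is nonnegative, Lipschitz on compact subsets of $\Omega$, satisfies $-\Delta u_i \le \lambda_1(\omega_i)\, u_i$ in the distributional sense in $\Omega$, and has zero trace on $\partial \Omega$ (since $\omega_i \subset \Omega$). Standard elliptic regularity supplies a uniform bound $\|u_i\|_{L^\infty(\Omega)} \le M$. Setting $K := M \max_j \lambda_1(\omega_j)$, I deduce that $u_i$ is a bounded nonnegative subsolution of the Poisson equation $-\Delta v = K$ in $\Omega$ with zero Dirichlet data. This reduction is essential, since the operator $-\Delta - \lambda_1(\omega_i)$ is typically non-coercive on $\Omega$ (one has $\lambda_1(\omega_i) \ge \lambda_1(\Omega)$), which would obstruct a direct comparison for the original subsolution inequality.

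Next comes the boundary decay estimate. Fix $x_0 \in \partial \Omega$, let $B_r(y_0)$ be the exterior ball tangent to $\Omega$ at $x_0$, and pick $R > r$ large enough that $\overline\Omega \subset B_R(y_0)$. I build a nonnegative radial function $\phi(|x-y_0|)$ on the annulus $\{r \le |x-y_0| \le R\}$ that is a supersolution of $-\Delta v = K$, vanishes on the inner sphere and dominates $M$ on the outer sphere, by combining a harmonic profile ($\rho^{2-N}$ for $N \ge 3$, $\log \rho$ for $N=2$) with a negative quadratic correction of the form $-\tfrac{K}{2N}(\rho^2 - r^2)$. A direct computation shows that the coefficients can be chosen so that $\phi'(r)$ is bounded by a constant $C$ depending only on $N$, $r$, $R$, $M$ and $K$, and therefore independent of $x_0$. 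Applying the weak maximum principle to $\phi - u_i$ on $\Omega$, where $\phi \ge 0 = u_i$ on $\partial \Omega$, yields $u_i \le \phi$ throughout $\Omega$, whence the linear estimate
\[
u_i(x) \le C \, \dist(x, \partial \Omega)
\]
holds in a uniform neighborhood of $x_0$.

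The last step combines this decay with interior gradient bounds. For $x \in \Omega$ with $d(x) := \dist(x, \partial \Omega)$ small, $u_i$ satisfies $|{-}\Delta u_i| \le K$ on $B_{d(x)/2}(x) \subset \Omega$, and standard interior estimates give $|\nabla u_i(x)| \le C_1 \bigl( d(x)^{-1} \|u_i\|_{L^\infty(B_{d(x)}(x))} + d(x)\, K \bigr)$, which is uniformly bounded thanks to the previous step; away from $\partial \Omega$, the bound is provided directly by Theorem~\ref{thm: properties minimizers}(4). Together they produce a uniform Lipschitz constant on $\Omega$, which then extends to $\overline\Omega$ by continuity. The principal technical difficulty lies in the barrier construction and, in particular, in the handling of the zeroth-order term; once this is absorbed into the Poisson subsolution inequality, the remaining comparison argument is classical.
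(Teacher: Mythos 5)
Your overall strategy (barrier giving linear decay, then interior gradient estimates on balls whose radius is comparable to the distance to the boundary) is the right one, and it is essentially the mechanism behind the paper's Theorem \ref{thm: lip general}. However, as written your argument has a genuine gap in the last step. You measure the decay of $u_i$ from $\partial\Omega$ and then apply interior gradient estimates for the Poisson equation on balls $B_{d(x)/2}(x)\subset\Omega$ with $d(x)=\dist(x,\partial\Omega)$. But the equation $-\Delta u_i=\lambda_1(\omega_i)u_i$ holds only in the open set $\omega_i=\{u_i>0\}$; across the free boundary $\partial\omega_i\cap\Omega$ the function $u_i$ is merely a distributional subsolution (its distributional Laplacian carries a singular part on $\partial\omega_i$), and interior gradient estimates are not available there. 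Since $\dist(x,\partial\omega_i)\le\dist(x,\partial\Omega)$ for $x\in\omega_i$, and the free boundary $\partial\omega_i\cap\Omega$ will in general meet or accumulate on $\partial\Omega$, there are points $x\in\omega_i$ arbitrarily close to $\partial\Omega$ for which $B_{d(x)/2}(x)$ crosses $\partial\omega_i$; at such points neither your boundary barrier nor the interior local Lipschitz result of Theorem \ref{thm: properties minimizers}\eqref{p4} (whose constant degenerates near $\partial\Omega$) gives a uniform gradient bound.

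The repair is to run your two-step argument relative to $\partial A_i=\partial\omega_i$ rather than relative to $\partial\Omega$, which is exactly what the paper does: every point of $\partial A_i\cap\Omega$ admits an exterior tangent ball of radius $1$ (Lemma \ref{lem: ext sphere}), and every point of $\partial A_i\cap\partial\Omega$ admits an exterior tangent ball of radius $r$ by the hypothesis on $\Omega$; hence $A_i$ satisfies a $\min(1,r)$-uniform exterior sphere condition in the whole of $\R^N$. One then applies Theorem \ref{thm: lip general} with $\Lambda=\R^N$, $A=A_i$ and $f=\lambda_1(\omega_i)u_i\in L^\infty$, obtaining the decay $|u_i(x)|\lesssim\dist(x,\partial A_i)$ and gradient estimates on balls $B_{\dist(x,\partial A_i)}(x)\subset A_i$ where the equation genuinely holds. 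Your barrier construction (Kelvin-transformed quadratic in the paper, harmonic profile plus quadratic correction in your version) and your reduction of the zeroth-order term to a bounded right-hand side are both fine; only the choice of which boundary to measure distance from needs to be corrected.
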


Next, we establish a relation involving the normal derivatives of two ``adjacent components" on the regular part of the free boundary. 

In what follows, for each $i$, $\nu_i(x)$ will denote the exterior normal at a point $x\in \partial S_i$ (at points where such a normal vector does exist).

\medskip 

\noindent \textbf{Assumptions.}  Let $x_0 \in \pa S_i \cap \Omega$, and let us assume that $\Gamma_i^R:=\partial S_i \cap B_R(x_0)$ is a smooth hypersurface, for some $R>0$. By the $1$-uniform exterior sphere condition, we know that the principal curvatures of $\pa S_i$ in $x_0$, denoted by $\chi_{h}^{i}(x_0)$, $h=1,\dots,N-1$, are smaller than or equal to $1$ (where we agree that outward is the positive direction). We further suppose that the strict inequality holds, that is there exists $\delta >0$ such that 
\begin{equation}\label{eq:principalcurvatures}
\chi_1^{i}(x_0),\ldots, \chi_{N-1}^{i}(x_0) \le 1-\delta.
\end{equation}
We know that there exists $j \neq i$ and $y_0 \in \pa \, \supp u_j$ such that $|x_0-y_0|=1$. 

\begin{theorem}\label{thm:curvaturerelations}
Let $\bu=(u_1,\ldots, u_k)$ be either any minimizer of $J_\infty$ in $H_\infty$, or a vector of first eigenfunctions associated to an optimal partition $(\omega_1,\ldots, \omega_k)$ of \eqref{eqn part dist}. Under the previous assumptions and notations, we have that $y_0 = x_0 + \nu_i(x_0)$ is the unique point in $\bigcup_{k \neq i} \pa \, \supp u_k$ at distance $1$ from $x_0$. If $y_0 \in \pa\, \supp u_j \cap \Omega$, then $\pa\, \supp u_j$ is also smooth around $y_0$, and 
\begin{equation}\label{free-bound cond}
\frac{(\partial_\nu u_i(x_0))^2}{(\partial_\nu u_j(y_0))^2}= \left\{
\begin{array}{cl}
\displaystyle \mathop{\prod_{h=1}^{N-1} }_{\chi_h^{i}(x_0)\neq 0} \left|\frac{\chi_h^i(x_0)}{\chi_h^j(y_0)}\right| & \text{ if } \chi_h^i(x_0)\neq 0 \text{ for some $h$},\\
1  	& \text{ if $\chi_h^i(x_0)=0$  for all } h=1,\ldots, N-1.
\end{array}\right.
\end{equation}
\end{theorem}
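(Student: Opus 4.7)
The plan is to deduce the free-boundary condition~\eqref{free-bound cond} from a two-sided inner variation at the touching pair $(x_0, y_0)$, after first pinning down the local geometry of $\partial\,\supp u_j$ near $y_0$. I would start with the uniqueness of $y_0$: by Theorem~\ref{thm: properties minimizers}(5) there exists a touching partner $y\in\bigcup_{k\neq i}\partial\,\supp u_k$ at distance $1$ from $x_0$; since supports of distinct components are at distance at least $1$, the open ball $B_1(y)$ is disjoint from $S_i$ and tangent to $\partial S_i$ at $x_0$. Smoothness of $\partial S_i$ at $x_0$ forces $y$ to lie on the normal line through $x_0$, and exteriority of $B_1(y)$ selects $y=x_0+\nu_i(x_0)=y_0$. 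Since distinct supports are at positive distance, $y_0$ lies on exactly one boundary $\partial\,\supp u_j$.

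Next I would establish that the distance-$1$ normal map $T(x)=x+\nu_i(x)$ sends a neighborhood of $x_0$ in $\partial S_i$ diffeomorphically onto a neighborhood of $y_0$ in $\partial\,\supp u_j$. In principal directions the tangential differential reads $DT(e_h)=(1-\chi_h^i)e_h$, so the strict bound $\chi_h^i\leq 1-\delta$ makes $T$ a smooth embedding with Jacobian $\prod_h(1-\chi_h^i)>0$. Applying the uniqueness argument pointwise to nearby $x'\in\partial S_i$ (again invoking Theorem~\ref{thm: properties minimizers}(5)), together with continuity and the positive separation of distinct supports, one obtains $T(x')\in\partial\,\supp u_j$ for every $x'$ close to $x_0$; combining this inclusion with the Lipschitz regularity of $u_j$ and the exterior sphere condition of Theorem~\ref{thm: properties minimizers}(3) identifies $T(\partial S_i)$ locally with $\partial\,\supp u_j$, yielding simultaneously the smoothness of $\partial\,\supp u_j$ near $y_0$ and the fact that it coincides with the parallel surface of $\partial S_i$ at distance $1$. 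The classical formula for the principal curvatures of parallel surfaces (with the sign flip $\nu_j=-\nu_i$ at identified points) then gives $\chi_h^j(y_0) = -\chi_h^i(x_0)/(1-\chi_h^i(x_0))$, or equivalently
\[
1-\chi_h^i(x_0)=\bigl|\chi_h^i(x_0)/\chi_h^j(y_0)\bigr|\quad\text{whenever } \chi_h^i(x_0)\neq 0.
\]

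Finally I would carry out the inner variation. For $\phi\in C^\infty_c$ supported in a small neighborhood of $x_0$ on $\partial S_i$, construct a smooth family of diffeomorphisms $\Phi_\epsilon$ of $\R^N$ that displaces $\partial S_i$ by $\epsilon\phi\,\nu_i$ and $\partial\,\supp u_j$ by $-\epsilon(\phi\circ T^{-1})\,\nu_j$; because $\nu_j\circ T=-\nu_i$ at identified parallel points, the distance constraint is preserved to first order in $\epsilon$, and a higher-order correction produces a genuine admissible competitor $\mathbf{u}^\epsilon$ for both signs of $\phi$. Taking $u_i^\epsilon,u_j^\epsilon$ to be the harmonic (respectively first-eigenfunction) extensions on the new supports and applying the Hadamard shape-derivative formula (noting that $u_i=u_j=0$ on the moving parts of the free boundary), the first variation reads
\[
\delta J_\infty = -\int_{\partial S_i}\phi\,(\partial_\nu u_i)^2\,d\sigma_x + \int_{\partial\,\supp u_j}(\phi\circ T^{-1})\,(\partial_\nu u_j)^2\,d\sigma_y,
\]
and the change of variables $y=T(x)$ produces the Jacobian $\prod_h(1-\chi_h^i(x))$ in the second integral. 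Stationarity for perturbations of both signs (enabled by the strict curvature bound $\chi_h^i\leq 1-\delta$) then forces the pointwise identity $(\partial_\nu u_i(x_0))^2=(\partial_\nu u_j(y_0))^2\prod_h(1-\chi_h^i(x_0))$, which combined with the parallel-surface relation of the previous step yields~\eqref{free-bound cond}. The argument for problem~(B) is formally identical, replacing the Hadamard formula for the Dirichlet energy by that for the first Dirichlet eigenvalue and differentiating $F(\lambda_1,\dots,\lambda_k)$ by the chain rule; a suitable normalization of the eigenfunctions (e.g.\ $\int u_i^2=\partial_i F$) absorbs the factors $\partial_iF,\partial_jF$.

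The delicate step is the second one: promoting the tangential contact of $\partial\,\supp u_j$ with the parallel surface at the single point $y_0$ to a full local identification, and with it the smoothness of $\partial\,\supp u_j$ around $y_0$. The strict inequality $\chi_h^i\leq 1-\delta$ is crucial both to keep $T$ a diffeomorphism with positive Jacobian and to guarantee the availability of two-sided admissible perturbations in the variational step.
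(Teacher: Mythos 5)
Your overall strategy coincides with the paper's: uniqueness of $y_0$ along the normal (the paper's Lemma \ref{lem: uniqueness y0}, via Federer's characterization of the normal cone), identification of $\pa\,\supp u_j$ near $y_0$ with the parallel hypersurface $\{x+\nu_i(x)\}$, whose smoothness follows from the strict curvature bound exactly as you say, a coupled normal domain variation, the Hadamard shape-derivative formula, and finally the parallel-surface curvature relation (the paper phrases this last step as the limit of the area ratio $|\Gamma_2^R|/|\Gamma_1^R|$ as $R\to0$, quoting \cite[Lemma 9.3]{CaPaQu}, which is the integrated version of your Jacobian computation). Your remark on problem (B) about normalizing the eigenfunctions to absorb $\partial_iF$, $\partial_jF$ is a sensible reading of a point the paper leaves implicit.

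The gap is in the variational step, and it sits exactly where the paper invests its main technical effort. You prescribe the motion of $\pa\,\supp u_j$ as $-\eps(\phi\circ T^{-1})\nu_j$, note that the constraint is ``preserved to first order'', and defer the rest to ``a higher-order correction''. But $\dist(\supp u_i,\supp u_j)\ge1$ is a hard constraint: a competitor violating it at order $\eps^2$ is not in $H_\infty$ at all, and minimality then gives nothing. One must exhibit an \emph{exact} admissible deformation that agrees with the first-order prescription in the normal direction (so the shape derivative is unchanged). This is the content of the paper's Lemma \ref{lemma:dist_perturbedsupp}: the retracted boundary of $S_j$ is defined as the distance-$1$ set $\{x_\eps+\nu^\eps(x_\eps)\}$ along the \emph{deformed} normals, and proving that no point of the deformed $S_j$ comes closer than $1$ — equivalently, that projections onto the deformed $\Gamma_{1,\eps}^R$ are unique — requires a quantitative use of $\chi_h^i\le1-\delta$ through a planar ODE comparison for the curvature (Lemma \ref{lem: curvatura piana}). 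This cannot be dismissed as a higher-order correction; it is the heart of the proof. A secondary point: the paper only uses one-sided variations $\eta\ge0$ (expand $S_i$, retract $S_j$), obtaining an inequality, and derives the reverse inequality by exchanging the roles of $i$ and $j$. Your sign-changing $\phi$ forces you to also justify admissibility on the portion where $S_j$ expands, including checking that the expanded $S_j$ remains at distance $\ge1$ from the other supports $S_l$, $l\ne i,j$ (which uses the strict inequality $\dist(\Gamma_j^R,S_l)>1$ coming from the uniqueness of the touching partner).
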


We stress that, since the sets $S_i$ and $S_j$ are at distance $1$ from each other and \eqref{eq:principalcurvatures} holds, $\chi_h^i(x_0) \neq 0$ if and only if $\chi_h^j(y_0) \neq 0$, and hence the term on the right hand side is always well defined.

The proof of Theorem \ref{thm:curvaturerelations} is based on the introduction of a family of domain variations for the minimizer $\mf{u}$. As we shall see, the possibility of producing admissible domain variations, preserving the constraint on the distance of the supports in $H_\infty$, presents major difficulties. At the moment, we can only overcome such obstructions and produce more or less explicit variations supposing that $\pa S_i$ is locally regular. This is the main problem when trying to study the regularity of the free boundary. Regarding this point, we mention that the proofs of all our results (and also of those in \cite{CaPaQu}, in a nonvariational case) are completely different with respect to the analogue counterpart in problems with point-wise interaction. Indeed, all the local techniques, such as blow-up analysis and monotonicity formulae, cannot be straightforwardly adapted when dealing with long-range interaction; the reason is that the interface between different positivity sets $\{u_i>0\}$ and $\{u_j >0\}$ with $i \neq j$ is now a strip of width at least $1$, and hence with a standard blow-up one cannot catch the interaction on the free-boundary at the limit. 

We also mention that the validity of a uniform exterior sphere condition does not directly imply any extra regularity for $\pa S_i$: if we could show that $\pa S_i$ is a set with positive reach (see \cite{Fed}), then we could argue as in \cite[Corollary 6.3]{CaPaQu} and prove at least that the Hausdorff dimension of $\pa S_i$ is $N-1$ (see also \cite[Theorem 4.2]{ColMar} for a different proof of this fact), but on the other hand sets enjoying the uniform exterior sphere condition are not necessarily of positive reach, as shown in \cite[Section 2]{Nouretal.}.

\begin{remark}
A very interesting feature of Theorem \ref{thm:curvaturerelations} stays in the fact that it reveals a deep difference between segregation models with point-wise interaction, and with long-range interaction. To explain this difference, let us consider a sequence $\{\mf{u}_\beta\}$ of solutions to \eqref{syst p}, with $p=1$ and $\beta \to +\infty$. This is the setting studied in \cite{CaPaQu}. In \cite[Theorem 9.1]{CaPaQu}, the authors derive a free-boundary condition analogous to \eqref{free-bound cond} for the limit configurations in case $p=1$, but in their situation, the left hand side is replaced by the ratio between the normal derivatives, $\partial_\nu u_i(x_0) / \partial_\nu u_j(y_0)$. This difference is in contrast with respect to segregation phenomena with point-wise interaction, where, as proved in \cite{tt}, limit configurations associated with 
\[
-\Delta u_i = -\beta u_i \sum_{j \neq i} u_j \quad \text{or} \quad  -\Delta u_i = -\beta u_i \sum_{j \neq i} u_j^2
\]
belong to the same functional class \cite{DaWaZh,tt}, and hence in particular satisfy the same free-boundary condition, that is $|\partial_\nu u_i(x_0)| = |\partial_\nu u_j(x_0)|$ on the regular part of the free boundary. A similar difference has been observed in \cite{TVZ_half,TVZ_s,VZ_frac} in the case of fractional operators, that is when the non-locality is in the differential operator. 

Finally, in comparison with the free boundary condition derived in \cite{CaPaQu}, it is worthwhile noticing that the analogue of \eqref{free-bound cond} there involves the plain quotient of the normal derivatives, while here we find the squared one.
\end{remark}

\begin{remark}
The previous result may fail if the right hand side in \eqref{eq:principalcurvatures} is replaced by the constant $1$. Indeed, if $\partial S_i \cap B_R(x_0)=\partial B_1(0)\cap B_R(x_0)$ for some $x_0\in \partial B_1(0)$ and $R>0$, and the set $S_i$ is contained in the exterior of $B_1(0)$, then $y_0=0$ is a cusp for $\pa S_j$. 
\end{remark}

\subsection{Structure of the paper}

We first treat problem (A). In Section \ref{sec existence} we prove Theorem \ref{thm: existence} for this problem, relating this segregation problem with a variational competition--diffusion of type \eqref{syst p}. Then some qualitative properties of any possible minimizer of problem (A) are shown in Section \ref{sec:Properties_of_minimizers}, where we prove Theorem \ref{thm: properties minimizers} for this problem. Section \ref{FreeBoundary} contains the proof of the free boundary condition contained in the statement of  Theorem \ref{thm:curvaturerelations} for problem (A).

The analogous statements for problem (B) -- existence and properties of minimizers, and free boundary condition -- are proved in Section \ref{sec:ProblemB}. 

Finally, in Appendix \ref{appendix} we state and prove an Hadamard's type formula which we need along this paper.

\section{Existence of a minimizer for Problem (A)}\label{sec existence}
In this section we prove Theorem \ref{thm: existence}. To this purpose, we introduce a competition parameter $\beta>0$ which allows us to remove the segregation constraint. To be precise, 
let 
\begin{equation}\label{def H}
H=\{\bu\in H^1(\Omega_1,\R^k):\ u_i=f_i \text{ a.e. in } \Omega_1\setminus \Omega\} \supset H_\infty,
\end{equation}
and let $\beta>0$. We consider the minimization of the functional 
\[
J_\beta(\bu)=\sum_{i=1}^k \int_\Omega |\nabla u_i|^2 +\sum_{1\le i<j\le k} \iint_{\Omega_1 \times \Omega_1} \beta\, \mathds{1}_{B_1}(x-y)u_i^2(x) u_j^2(y)\, dx\,dy
\]
in the set $H$. With respect to the search of a minimizer for $\inf_{H_\infty} J_\infty$, the advantage stays in the fact that we can get rid of the infinite dimensional constraint $\dist(\supp u_i,\supp u_j)\ge 1$ for $i\neq j$, and we can easily show that a minimizer for $J_\beta$ in $H$ does exists, and satisfies an Euler-Lagrange equation of type \eqref{syst p} with $p=2$. This allows us to obtain Theorem \ref{thm: existence} as a direct corollary of the following statement:

\begin{theorem}\label{prop:existence}
For every $\beta>0$, there exists a minimizer $\mf{u}_\beta=(u_{1,\beta},\ldots, u_{k,\beta})$ for $\inf_H J_\beta$, which is a solution of
\begin{equation}\label{eq:system}
\begin{cases}
-\Delta u_i=-\beta u_i \sum_{j\neq i} (\mathds{1}_{B_1} \star u_j^2) & \text{in $\Omega$}\\
u_i > 0 & \text{in $\Omega$} \\
u_i=f_i & \text{in  $\Omega_1\setminus \Omega$}.
\end{cases}
\end{equation}
The family $\{\bu_\beta: \beta >0\}$ is uniformly bounded in $H^1(\Omega_1,\R^k)\cap L^\infty(\Omega_1)$, and there exists $\mf{u}=(u_1,\ldots, u_k) \in H$ such that:
\begin{enumerate}
\item $\bu_\beta \to \bu$ strongly in $H^1(\Omega)$ as $\beta\to +\infty$, up to a subsequence;
\item $\dist(\supp u_i,\supp u_j)\ge 1$ for every $i\neq j$, so that $\mf{u} \in H_\infty$;
\item for every $i\neq j$, 
\[
\lim_{\beta\to +\infty} \iint_{\Omega_1\times \Omega_1} \mathds{1}_{B_1}(x-y)u_{i,\beta}^2(x)u_{j,\beta}^2(y) \, dx\,dy=0
\]
\item $\bu$ is a minimizer for $\inf_{H_\infty} J_\infty$. In particular, $\mf{u}$ is a solution to problem (A).
\end{enumerate}
\end{theorem}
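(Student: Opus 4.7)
The plan is to apply the direct method to $J_\beta$ for each fixed $\beta>0$, and then to perform a penalization limit as $\beta\to+\infty$, using a fixed competitor in $H_\infty$ as comparison. For fixed $\beta$, the functional $J_\beta$ is nonnegative, coercive on the affine space $H$ (the prescribed boundary trace via the $f_i$ together with the Dirichlet part controls the full $H^1$ norm), and weakly sequentially lower semicontinuous: the Dirichlet part is convex, and the interaction term is actually weakly continuous on $H^1$ thanks to the Rellich--Kondrachov compactness $H^1(\Omega_1)\hookrightarrow L^2(\Omega_1)$ together with the boundedness of $\mathds{1}_{B_1}$. Hence a minimizer $\mf{u}_\beta\in H$ exists. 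Since $J_\beta$ depends on each $u_i$ only through $u_i^2$ and the boundary data $f_i$ are nonnegative, the replacement $u_{i,\beta}\mapsto |u_{i,\beta}|$ leaves $J_\beta$ unchanged and keeps us in $H$, so we may assume $u_{i,\beta}\ge 0$. Computing the first variation yields the Euler--Lagrange system \eqref{eq:system}; the strong maximum principle applied to $-\Delta u_{i,\beta}+c_{i,\beta}u_{i,\beta}=0$ with the nonnegative bounded coefficient $c_{i,\beta}:=\beta\sum_{j\neq i}(\mathds{1}_{B_1}\star u_{j,\beta}^2)$ then gives $u_{i,\beta}>0$ in $\Omega$.

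For the uniform bounds, observe that $\mf{f}=(f_1,\ldots,f_k)$ itself belongs to $H_\infty$, and the interaction term of $J_\beta$ vanishes on any element of $H_\infty$. Hence $J_\beta(\mf{u}_\beta)\le J_\beta(\mf{f})=J_\infty(\mf{f})=:C_0$ uniformly in $\beta$. This immediately yields the uniform $H^1(\Omega_1,\R^k)$ bound together with
\begin{equation*}
\iint_{\Omega_1\times\Omega_1}\mathds{1}_{B_1}(x-y)\,u_{i,\beta}^2(x)\,u_{j,\beta}^2(y)\,dx\,dy\le \frac{C_0}{\beta},
\end{equation*}
which is item (3). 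The $L^\infty$ bound follows directly from \eqref{eq:system}: its right-hand side is nonpositive, so each $u_{i,\beta}$ is subharmonic in $\Omega$ with boundary datum $f_i$, whence the maximum principle gives $\|u_{i,\beta}\|_{L^\infty(\Omega_1)}\le \max_i\|f_i\|_{L^\infty(\Omega_1)}$.

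Up to a subsequence $\mf{u}_\beta\rightharpoonup \mf{u}$ weakly in $H^1(\Omega_1,\R^k)$ and strongly in $L^2(\Omega_1,\R^k)$. Passing to the limit in the integral estimate above via strong $L^2$-convergence forces $u_i(x)u_j(y)=0$ a.e.\ on $\{|x-y|<1\}$ for $i\neq j$; consequently the open $1$-neighborhood of the largest open set where $u_j\equiv 0$ a.e.\ is itself a set where $u_i\equiv 0$ a.e., so in the weak-support sense of \cite[Proposition 4.17]{Brezis_book_2011} one has $\dist(\supp u_i,\supp u_j)\ge 1$ and $\mf{u}\in H_\infty$, proving item (2). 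To obtain minimality and strong convergence, for any $\mf{v}\in H_\infty$ one has $J_\beta(\mf{v})=J_\infty(\mf{v})$, and weak lower semicontinuity together with nonnegativity of the interaction term yields
\begin{equation*}
J_\infty(\mf{u})\le \liminf_{\beta\to+\infty}\sum_{i=1}^k\int_\Omega|\nabla u_{i,\beta}|^2\le \liminf_{\beta\to+\infty}J_\beta(\mf{u}_\beta)\le J_\infty(\mf{v}),
\end{equation*}
which proves (4). Choosing $\mf{v}=\mf{u}$, all inequalities become equalities; in particular $\|\nabla u_{i,\beta}\|_{L^2(\Omega)}\to \|\nabla u_i\|_{L^2(\Omega)}$, which combined with weak convergence gives the strong $H^1(\Omega)$-convergence of (1). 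The delicate step of this plan is the passage from the measure-theoretic identity $u_i(x)u_j(y)=0$ on $\{|x-y|<1\}$ to the weak-support distance condition in the definition of $H_\infty$; the remaining ingredients are fairly standard applications of the direct method, maximum principle, and semicontinuity.
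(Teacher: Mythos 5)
Your argument follows essentially the same route as the paper's: direct method for each fixed $\beta$, a uniform energy bound obtained by comparing with a fixed competitor in $H_\infty$ (the paper uses $c_\beta\le c_\infty$, you use $\mf{f}$ itself; these are equivalent), subharmonicity plus the maximum principle for the $L^\infty$ bound, weak compactness, vanishing of the interaction term, and the chain of inequalities that simultaneously yields minimality of $\mf{u}$ and strong $H^1$ convergence. Your derivation of item (3) directly from $\beta\iint(\cdots)\le C_0$ is a harmless shortcut compared with the paper, which extracts it from the chain of equalities.

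The one slip is precisely in the step you flag as delicate. From $u_i(x)u_j(y)=0$ for a.e.\ $(x,y)$ with $|x-y|<1$, the set on which $u_i$ vanishes a.e.\ is the open $1$-neighborhood of $\supp u_j$ (equivalently, of the set where $u_j\neq 0$), \emph{not} the $1$-neighborhood of the largest open set where $u_j\equiv 0$ a.e.; the latter neighborhood in general meets $\supp u_i$ (e.g.\ when the two supports are at distance exactly $1$). The correct argument runs: by Fubini, for a.e.\ $y$ with $u_j(y)\neq 0$ one has $u_i=0$ a.e.\ on $B_1(y)$; since every $y_0\in\supp u_j$ is a limit of such points, $u_i=0$ a.e.\ on the open set $\{x:\dist(x,\supp u_j)<1\}$, which is therefore contained in the complement of $\supp u_i$, giving $\dist(\supp u_i,\supp u_j)\ge 1$. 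With this correction the proof is complete and matches the paper's.
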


\begin{remark}
Without any additional complication, we can replace in the previous theorem the indicator function $\mathds{1}_{B_1}$ with a more general function $V\in L^\infty(\R^N)$ satisfying $V>0$ a.e. in $B_1$, $V=0$ a.e. on $\R^N\setminus \overline{B_1}$. 
\end{remark}

The proof of Theorem \ref{prop:existence} is the object of the rest of the section. Before proceeding, we observe that, by the definition of support given in \cite[Proposition 4.17]{Brezis_book_2011}, the set $H_\infty$ can be defined in the following equivalent way:
\[
\begin{split}
H_\infty &=\left\{\mf{u}\in H:\ \iint_{\Omega_1 \times \Omega_1} \mathds{1}_{B_1}(x-y) u_i^2(x)u_j^2(y) \, dx\,dy=0\ \forall i\neq j \right\}
\end{split}
\]
(see the proof of Lemma \ref{lem: harmonicity} below for more details).

\begin{remark}
Here it is worth to stress that we consider the functions $u_i$ as defined in $\Omega_1$, and hence the supports have to be considered in this set (and not only in $\Omega$).
\end{remark}


\begin{proof}[Proof of Theorem \ref{prop:existence}]
The existence of a minimizer $\mf{u}_\beta$ follows by the direct method of the calculus of variations, and the fact that minimizers solve \eqref{eq:system} is straightforward. Observe that $f_i\ge 0$, hence the minimizers are positive in $\Omega$, by the strong maximum principle.

For the uniform $L^\infty$ estimate, since $u_{i,\beta}>0$ is subharmonic in $\Omega$ for every $i=1,\dots,k$, by the maximum principle we have $\|u_{i,\beta}\|_{L^\infty(\Omega)} \le \|f_i\|_{L^\infty(\partial \Omega)}$. Let us set
\[
c_\beta:= \inf_H J_\beta \quad \text{and} \quad c_\infty:= \inf_{H_\infty} J_\infty.
\]
We observe that, since $J_\beta(\mf{v}) = J_\infty(\mf{v})$ for every $\mf{v} \in H_\infty$, we have $c_\beta \le c_\infty$. Then, by the minimality of $\mf{u}_\beta$, for every $\beta>0$ we have $J_\beta(\mf{u}_\beta) \le c_\infty$.
Since moreover $u_{i,\beta}\equiv f_i$ in $\Omega_1 \setminus \Omega$, the uniform $H^1(\Omega_1,\R^k)$ boundedness of $\{\mf{u}_\beta\}$ follows. Hence, up to a subsequence, $\mf{u}_\beta \rightharpoonup \mf{u}$ weakly in $H^1(\Omega_1,\R^k)$ and a.e. in $\Omega$. Moreover
\[
\lim_{\beta\to +\infty} \iint_{\Omega_1 \times \Omega_1} \mathds{1}_{B_1}(x-y)u_i^2(x) u_j^2(y)\, dx\, dy=0 \qquad \forall {i\neq j}
\]
and by the Fatou lemma we have
\[
0 \le \iint_{\Omega_1 \times \Omega_1} \mathds{1}_{B_1}(x-y) u_{i}^2(x)  u_{j}^2(y)  \, dx \,dy \le \liminf_{\beta \to +\infty} \iint_{\Omega_1 \times \Omega_1} \mathds{1}_{B_1}(x-y) u_{i,\beta}^2(x)  u_{j,\beta}^2(y)  \, dx \,dy= 0
\]
for every $i \neq j$. This in particular proves point (2) in the thesis and implies that $\mf{u} \in H_\infty$, defined in \eqref{def H infty}. 

 On the other hand, by the the minimality of $\mf{u}_\beta$ and weak convergence,
\begin{align*}
c_\infty & \le J_\infty(\mf{u}) = \sum_{i=1}^k \int_{\Omega} |\nabla u_{i}|^2 \le \liminf_{\beta \to \infty} \sum_{i=1}^k \int_{\Omega} |\nabla u_{i,\beta}|^2\\
&  \le \limsup_{\beta \to \infty} \sum_{i=1}^k \int_{\Omega} |\nabla u_{i,\beta}|^2 
 \le \limsup_{\beta \to \infty} J_\beta(\mf{u}_\beta) =\limsup_{\beta \to \infty} c_\beta\le c_\infty.
\end{align*}
This means that all the previous inequalities are indeed equalities, and in particular: 
\begin{itemize}
\item we have convergence $\|\nabla u_{i,\beta}\|_{L^2(\Omega)} \to \|\nabla u_i\|_{L^2(\Omega)}$,
which together with the weak convergence ensures that $\mf{u}_\beta \to \mf{u}$ strongly in $H^1(\Omega,\R^k)$ (recall that $\Omega$ is bounded);
\item 
point (3) of the thesis holds;
\item we have $c_\infty=J_\infty(\mf{u})$, which proves the minimality of $\mf{u}\in H_\infty$.  \qedhere
\end{itemize}
\end{proof}

\section{Properties of minimizers for problem (A)}\label{sec:Properties_of_minimizers}

This section is devoted to the proof of Theorem \ref{thm: properties minimizers} for the solutions of problem (A). Let then $\mf{u}$ be a minimizer for $\inf_{H_\infty} J_\infty$. 
Theorem \ref{thm: existence} (see also Theorem \ref{prop:existence}) does not give any information about the continuity of $u_i$, and in particular we do not know if the sets $S_i=\{x\in \Omega: u_i(x)>0\}$ are open. On the other hand it is reasonable to work at a first stage with the functions
\[
\Phi_i:\Omega \to \R,\qquad \Phi_i(x):=\int_{B_1(x)} u_i^2(y)\, dy,
\]
which are clearly continuous due to the Lebesgue dominated convergence theorem.

Let us consider the open sets
\[
C_i=\Omega \cap \left(\bigcup_{y\in \{\Phi_i=0\}} B_1(y)\right),\qquad D_i:= \textrm{int}\left( \Omega \setminus C_i\right),
\]
for $i=1,\ldots, k$, so that
\[
\Omega= C_i \cup D_i \cup (\partial D_i\cap \Omega),\qquad \text{ and } \qquad \partial D_i\cap \Omega=\partial C_i\cap \Omega.
\]
Observe that, by the definition of $\Phi_i$, we have $u_i=0$ a.e. in $C_i$. Moreover
\[
D_i = \{x \in \Omega: \dist(x,\{\Phi_i = 0\}) >1\} \subset \{\Phi_i>0\}.
\]
The strategy of the proof of Theorem \ref{thm: properties minimizers} can be summarized as follows:
\begin{itemize}
\item At first, we prove some simple properties of the set $D_i$ and of the restriction of $u$ on $D_i$.
\item In particular, we show that $S_i$ is the union of connected components of $D_i$, so that the regularity of $u_i$ in $\Omega$ is reduced to the regularity of $u_i$ on $\pa D_i$.
\item Using the basic properties of $D_i$, we show that $u_i$ is locally Lipschitz continuous across $\pa D_i$, and hence in $\Omega$. It follows in particular that $S_i$ is open, and directly inherits from $D_i$ properties \eqref{p3} and \eqref{p6} in Theorem \ref{thm: properties minimizers}. Moreover, points \eqref{p1} and \eqref{p2} holds.
\item As a last step, we prove point \eqref{p5} by using the minimality of $\mf{u}$.
\end{itemize}


\begin{lemma}\label{lem: harmonicity}
The function $u_i$ is harmonic in $D_i$. In particular, if $\tilde D_i$ is any connected component of $D_i$, then either $u_i\equiv 0$ or $u_i>0$ in $\tilde D_i$. 
\end{lemma}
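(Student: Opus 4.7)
The plan is to prove both parts by the classical variational argument: smooth compactly supported perturbations of $u_i$ inside $D_i$ stay in $H_\infty$, so that minimality of $\mf{u}$ yields the Euler--Lagrange equation $\int_\Omega \nabla u_i \cdot \nabla \varphi = 0$ for every $\varphi \in C_c^\infty(D_i)$, and then Weyl's lemma together with the strong maximum principle finish the job.

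First I would verify the key geometric observation: for each $j \neq i$, $\supp u_j$ is contained in $\{\Phi_i = 0\}$. Indeed, for any $y \in \supp u_j$, the constraint defining $H_\infty$ forces $B_1(y) \cap \supp u_i = \emptyset$, hence $u_i \equiv 0$ a.e.\ on $B_1(y) \cap \Omega_1$, so that $\Phi_i(y) = 0$ (with $\Phi_i$ extended in the natural way to $\Omega_1$ if $y \notin \Omega$). Consequently, every $x \in D_i$ satisfies $\dist(x, \supp u_j) > 1$, with a uniformly positive gap on each compact subset of $D_i$.

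Next, for $\varphi \in C_c^\infty(D_i)$, I would consider the variation $\mf{v}_t := \mf{u} + t \varphi \mathbf{e}_i$ for $t \in \R$. Since $\supp \varphi$ is compactly contained in $D_i \subseteq \Omega$, the boundary identity $v_{i,t} = f_i$ on $\Omega_1 \setminus \Omega$ holds automatically. For the long-range distance constraint, $\supp v_{i,t} \subseteq \supp u_i \cup \supp \varphi$: the first piece is at distance $\ge 1$ from $\supp u_j$ by hypothesis on $\mf{u}$, while the second is at distance $>1$ from $\supp u_j$ by the observation above. Hence $\mf{v}_t \in H_\infty$ for every $t \in \R$, and minimality of $\mf{u}$ yields
\[
0 = \left.\frac{d}{dt}\right|_{t=0} J_\infty(\mf{v}_t) = 2 \int_\Omega \nabla u_i \cdot \nabla \varphi \, dx \qquad \forall\, \varphi \in C_c^\infty(D_i),
\]
so that $u_i$ is weakly, and then by Weyl's lemma classically, harmonic in $D_i$.

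For the dichotomy on connected components, I note that $\mf{u}$ may be assumed nonnegative (replacing each $u_i$ by $|u_i|$ leaves $J_\infty$ and the supports invariant, hence preserves both admissibility and the minimum value). On each connected component $\tilde D_i$, $u_i$ is then a nonnegative harmonic function, and the strong maximum principle produces the alternative $u_i \equiv 0$ or $u_i > 0$ throughout $\tilde D_i$. The main technical subtlety is exactly the admissibility check for the variation: the construction of $D_i$ via the buffer set $C_i$ is tailored precisely so that compactly supported perturbations in $D_i$ respect the long-range segregation, and this is where essentially all the work sits; once the inclusion $\supp u_j \subseteq \{\Phi_i = 0\}$ is in place, the remainder is routine.
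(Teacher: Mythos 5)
Your proposal is correct and follows essentially the same route as the paper: the heart of the matter in both is the observation that $\supp u_j\subseteq\{\Phi_i=0\}$ for $j\neq i$ (the paper states it contrapositively as $\{\Phi_i>0\}\subseteq\Omega\setminus\supp u_j$), which gives $\dist(D_i,\supp u_j)\ge 1$ and hence the admissibility of compactly supported perturbations of $u_i$ in $D_i$, after which minimality yields harmonicity and the maximum principle gives the dichotomy. No substantive differences to report.
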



\begin{proof}
The set $D_i$ is open. If we know that $\dist(D_i, \supp u_j) \ge 1$, then we can consider any $\phi\in C^\infty_c(D_i)$ and observe that, by the minimality of $\mf{u}$ for $J_\infty$ on the set $H_\infty$, the function
\[
f(\eps):=J_\infty(u_1,\ldots, u_{i-1}, u_i+\eps \phi, u_{i+1},\ldots, u_k)
\]
has a minimum at $\eps=0$. This implies that $u_i$ is harmonic in $D_i$, and all the other conclusions follow immediately. Therefore, in what follows we have to show that 
\begin{equation}\label{D supp}
\dist(D_i, \supp u_j) \ge 1 \qquad \forall j \neq i.
\end{equation}
By definition of $H_\infty$  we have $u_i^2(x)u_j^2(y)\mathds{1}_{B_1}(x-y)=0$ for a.e. $x,y\in \Omega_1$,
that is
\[
u_i^2(x)u_j^2(y)=0 \quad \text{ for a.e. } x,y\in \Omega_1,\ |x-y|<1.
\]
As a consequence, $u_j (x)\Phi_i(x) = 0$ for a.e. $x\in \Omega$ and every $j \neq i$. In particular, this implies that 
\begin{equation}\label{2353}
\{\Phi_i>0\} \subset \left( \Omega \setminus \supp u_j\right).
\end{equation}
Let $x_0 \in D_i$. Then by definition of $D_i$, $\dist(x_0,\{\Phi_i=0\}) > 1$, and hence $B_1(x_0) \subset \{\Phi_i>0\}$. But then, due to \eqref{2353}, and since $x_0$ has been arbitrarily chosen, we deduce that \eqref{D supp} holds.
\end{proof}

Let $A_i$ be the union of the connected components of $D_i$ on which $u_i>0$, and let $N_i$ be the union of those on which $u_i \equiv 0$, so that $D_i=A_i\cup N_i$. We know that $u_i$ is positive and harmonic in $A_i$, while $u_i =0$ a.e. in $N_i \cup C_i$. Since $A_i$, $N_i$ and $C_i$ are open, this means that (if necessary replacing $u_i$ with a different representative in its same equivalence class) $u_i$ is continuous in $A_i$, $N_i$, and $C_i$. 
To discuss the continuity of $u_i$ in $\Omega$, we have to derive some properties of the boundary $\pa D_i \cap \Omega= (\pa A_i \cup \pa N_i)\cap \Omega = \pa C_i \cap \Omega$. In the next lemma we show that $D_i$ satisfies a uniform exterior sphere condition.

\begin{lemma}\label{lem: ext sphere}
For each $i$, the set $D_i$ satisfies the $1$-uniform exterior sphere condition in $\Omega$, in the following sense: for every $x_0 \in \pa D_i \cap \Omega$ there exists a ball $B$ of radius $1$ such that 
\[
D_i \cap B = \emptyset \quad \text{and} \quad x_0 \in \overline{D_i} \cap \overline{B}.
\]
Moreover, in $B$ we have $u_i \equiv 0$.
\end{lemma}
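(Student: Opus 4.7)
The plan is to identify the exterior ball $B$ explicitly as $B_1(y_0)$, for a suitable point $y_0$ arising from the defining inequality of $D_i$. The key preliminary step is to verify the alternative description
\[
D_i = \{x \in \Omega : \dist(x, Z_i) > 1\}, \quad \text{with } Z_i := \{x \in \Omega : \Phi_i(x) = 0\}.
\]
The inclusion ``$\supseteq$'' follows from the continuity of $x \mapsto \dist(x, Z_i)$ together with $D_i \subset \Omega$. For ``$\subseteq$'', if $x_0 \in \Omega$ had $\dist(x_0, Z_i) = 1$, then pushing $x_0$ slightly toward an almost-minimizer $y \in Z_i$ would yield points of $C_i$ arbitrarily close to $x_0$, forcing $x_0 \notin \textrm{int}(\Omega \setminus C_i) = D_i$.

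With this description in hand, for $x_0 \in \partial D_i \cap \Omega$ one shows $\dist(x_0, Z_i) = 1$ exactly: the bound ``$\ge 1$'' comes from the fact that $C_i$ is open and disjoint from $D_i$, hence from $\overline{D_i} \ni x_0$; the bound ``$\le 1$'' follows by approximating $x_0$ by points of $D_i$ and using continuity. The next step is to extract $y_0$ by compactness: choose $\{y_n\} \subset Z_i$ with $|x_0 - y_n| \to 1$; since $\Omega$ is bounded, a subsequence converges to some $y_0 \in \overline{\Omega}$ with $|x_0 - y_0| = 1$. After extending $u_i$ by zero outside $\Omega_1$, the map $\Phi_i$ extends continuously to $\R^N$, so that $\Phi_i(y_0) = \lim_n \Phi_i(y_n) = 0$; equivalently, $u_i \equiv 0$ a.e.\ in $B_1(y_0)$, which already yields the last assertion of the lemma.

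Finally I would set $B := B_1(y_0)$ and check the three required conditions. We have $x_0 \in \overline{B}$ because $|x_0 - y_0| = 1$, and $x_0 \in \overline{D_i}$ by hypothesis. To see $B \cap D_i = \emptyset$: any $x \in B$ satisfies $|x - y_n| \to |x - y_0| < 1$, so for $n$ large $|x - y_n| < 1$ with $y_n \in Z_i$, forcing $\dist(x, Z_i) < 1$ and hence $x \notin D_i$. The only subtlety in the whole argument is that the limit $y_0$ may land on $\partial \Omega$ and need not itself lie in $Z_i$; but this is painless, since the argument never uses $y_0 \in Z_i$ — only the approximating sequence $\{y_n\}$, together with the continuity of the extended $\Phi_i$ at $y_0$.
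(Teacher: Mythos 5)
Your proof is correct and follows essentially the same route as the paper's: the exterior ball is $B_1(y_0)$ for a point $y_0$ at distance $1$ from $x_0$ near which $\Phi_i$ vanishes, and your extra care about the minimizing sequence possibly converging to a point of $\partial \Omega$ is a genuine refinement, since the paper's proof tacitly assumes the infimum over $\{\Phi_i=0\}$ is attained inside $\Omega$. One small slip: the bound $\dist(x_0,Z_i)\le 1$ does not follow by approximating $x_0$ with points of $D_i$ (those have distance $>1$ to $Z_i$, which gives the opposite inequality); it follows instead from $x_0\notin D_i$ (as $D_i$ is open and $x_0\in\partial D_i$) combined with your characterization $D_i=\{x\in\Omega:\dist(x,Z_i)>1\}$.
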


\begin{proof}
This comes directly from the definitions: we have  
\[
\partial D_i \cap \Omega = \pa C_i \cap \Omega = \left\{x: \dist(x,\{\Phi_i=0\}) = 1\right\}\cap \Omega.
\]
Thus, given $x\in  \pa D_i\cap \Omega$, there exists $y\in \partial B_1(x)$ with $\Phi_i(y)=0$. The ball $B_1(y)$ is the desired exterior tangent ball, since $B_1(y)\subset C_i$, and hence $B_1(y)\cap D_i=\emptyset$.
\end{proof}

The exterior sphere condition permits to deduce that $\pa D_i$ has zero Lebesgue measure.

\begin{lemma}\label{lem: porosity}
The boundary $\pa D_i$ is a porous set, and in particular it has $0$ Lebesgue measure and $\textrm{dim}_{\mathfrak{H}} (\pa D_i) < N$.
\end{lemma}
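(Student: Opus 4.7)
The plan is to use the uniform exterior sphere condition of Lemma~\ref{lem: ext sphere} to establish the porosity of $\partial D_i \cap \Omega$, and then invoke a standard geometric measure theory result asserting that porous subsets of $\R^N$ have Hausdorff dimension strictly smaller than $N$ (and hence zero $N$-dimensional Lebesgue measure). Recall that a set $E \subset \R^N$ is called porous if there exists $c \in (0,1)$ and $r_0 > 0$ such that for every $x \in E$ and every $r \in (0, r_0)$, some open ball of radius $cr$ is contained in $B_r(x) \setminus E$.

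First I would fix $x_0 \in \pa D_i \cap \Omega$. By Lemma~\ref{lem: ext sphere} (and the proof thereof), there is $y \in \pa B_1(x_0)$ with $\Phi_i(y) = 0$, so that $B_1(y)$ is tangent to $\overline{D_i}$ at $x_0$ from outside and, by the very definition of $C_i$, satisfies $B_1(y) \cap \Omega \subset C_i$. Since $C_i$ is open and $\pa D_i \cap \Omega = \pa C_i \cap \Omega$, this already gives the key fact
\[
B_1(y) \cap \pa D_i \cap \Omega = \emptyset.
\]
Next, for $r > 0$ small I would set $z_r = x_0 + \tfrac{r}{2}(y - x_0)$. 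Since $|y - x_0| = 1$, a direct computation gives $|z_r - x_0| = r/2$ and $|z_r - y| = 1 - r/2$, so that $B_{r/4}(z_r) \subset B_r(x_0) \cap B_1(y)$. Choosing $r_0 = \dist(x_0, \pa \Omega) \wedge 1$ ensures in addition that $B_{r/4}(z_r) \subset \Omega$ for $r < r_0$, hence $B_{r/4}(z_r) \subset B_1(y) \cap \Omega \subset C_i$, which is disjoint from $\pa D_i$. This proves porosity of $\pa D_i \cap \Omega$ with absolute constant $c = 1/4$.

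To conclude, I would cite the classical fact (see e.g.\ Mattila's book or Luukkainen's work on porous sets) that every porous subset of $\R^N$ has Hausdorff dimension strictly less than $N$; the zero Lebesgue measure then follows a fortiori. The only minor subtlety is the localization of the porosity estimate to points of $\pa D_i \cap \Omega$, since the uniform exterior sphere condition is established only there; points of $\pa D_i$ lying on $\pa\Omega$ can be ignored for the measure/dimension bound, as $\pa \Omega$ is itself negligible in the relevant sense once $\Omega$ is a (mildly regular) bounded domain, but this does not really matter since the statement of property~\eqref{p6} in Theorem~\ref{thm: properties minimizers} only concerns $\partial \{u_i > 0\} \cap \Omega$. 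No step presents a serious obstacle: the whole argument reduces to the geometric observation that the exterior ball $B_1(y) \subset C_i$ provides, at every scale $r$, an interior ball of comparable size inside the complement of $\pa D_i$.
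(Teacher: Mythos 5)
Your proof is correct and follows essentially the same route as the paper: the unit exterior ball $B_1(y)\subset C_i$ at each point of $\pa D_i\cap\Omega$ yields, at every scale $r$, a ball of radius comparable to $r$ contained in $B_r(x_0)\setminus \pa D_i$, which gives porosity, after which the zero-measure and Hausdorff-dimension bounds follow from the standard theory of porous sets (the paper cites Petrosyan--Shahgholian--Uraltseva for exactly this). The only cosmetic differences are your porosity constant $1/4$ in place of the paper's $1/2$ and your explicit localization away from $\pa\Omega$.
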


For the definition of ``porosity", we refer to \cite[Section 3.2]{PeShUr}, while here and in what follows $\textrm{dim}_{\mathfrak{H}}$ denotes the Hausdorff dimension.

\begin{proof}
Since $\pa D_i \subset \Omega$ is bounded, to prove its porosity it is sufficient to show that there exists $\delta>0$ such that: for every ball $B_r(x_0)$ with $x_0 \in \pa D_i$, there exists $y \in B_r(x_0)$ with $B_{\delta r}(y) \subset B_r(x_0) \setminus \pa D_i$ (see \cite[Exercise 3.4]{PeShUr}). 

The existence of such $\delta = 1/2$ follows immediately by the exterior sphere condition: given $x_0 \in  \pa D_i$, there exists $z \in \Omega_1$ such that $B_1(z)$ is exterior to $D_i$. Let then $y$ be the point on the segment $x_0 z$ at distance $r/2$ from $D_i$. The ball $B_{r/2}(y)$ is contained both in $\Omega_1 \setminus \pa D_i$ and in $B_r(x_0)$, and this proves that $\pa D_i$ is porous. The rest of the proof follows by \cite[Page 62]{PeShUr}.
\end{proof}

It is not difficult now to deduce that $u_i$ is continuous at every point of $\pa N_i$. Indeed, notice that $\pa N_i \subset \pa C_i$, and in both $N_i$ and $C_i$ we have $u_i \equiv 0$. Since $\pa N_i \subset \pa D_i$ has $0$ Lebesgue measure, we deduce that $u_i=0$ a.e. in $\overline{N_i} \cup C_i = \Omega \setminus \overline{A_i}$. That is, up to the choice of a different representative,  $u_i \equiv 0$ in $\Omega \setminus \overline{A_i}$, and hence it is real analytic therein. At this stage, it remains to discuss the continuity of $u_i$ on $\pa A_i$. This is the content of the forthcoming Corollary \ref{thm lip}, where we show that actually $\mf{u}$ is locally Lipschitz continuous in $\Omega$. We postpone the proof, proceeding here with the conclusion of Theorem \ref{thm: properties minimizers}. The continuity of $u_i$ implies in particular that $\{u_i>0\}$ is open for every $i$, so that $\{u_i>0\}=A_i$. Thus, Lemmas \ref{lem: harmonicity}-\ref{lem: porosity} establish the validity of points \eqref{p2} and \eqref{p6} in Theorem \ref{thm: properties minimizers}. The subharmonicity of $u_i$, point \eqref{p1}, follows from \eqref{p2}.

Regarding point \eqref{p3}, the existence of an exterior sphere $B$ of radius $1$ for $\{u_i>0\}$ at any boundary point $x_0$ comes directly from Lemma \ref{lem: ext sphere}. We also know that $u_i \equiv 0$ in $B$, and furthermore, by \eqref{D supp}, $B_1(x_0) \cap \supp u_j = \emptyset$ for every $j \neq i$. This proves the validity of \eqref{p3}.

It remains only to show that also point \eqref{p5} holds.

\begin{proof}[Proof of Theorem \ref{thm: properties minimizers}-\eqref{p5}]
This is a consequence of the minimality. Take $x_0\in \partial S_i\cap \Omega$ and assume, in view of a contradiction, that $\dist(x_0,\supp u_j)>1$ for some $x_0\in \partial S_i \cap \Omega$, for every $j\neq i$. Then there exists $\rho>0$ such that $B_\rho(x_0)\subset \Omega$ and
\begin{equation}\label{2551}
\dist(B_\rho(x_0),\supp u_j)> 1 \qquad \forall j \neq i.
\end{equation}
Let $v$ be the harmonic extension of $u_i$ in $B_\rho(x_0)$:
\[
\begin{cases}
\Delta v=0 & \text{in $B_\rho(x_0)$} \\
v=u_i & \text{on $\pa B_\rho(x_0)$}.
\end{cases}
\]
Since $u_i \not \equiv 0$ on $\pa B_\rho(x_0)$, we infer that $v>0$ in $B_\rho(x_0)$, and in particular $v \not \equiv u_i$ in $B_\rho(x_0)$. Let now $\tilde{\bu}$ be defined by
\[
\tilde u_i=\begin{cases} u_i & \text{ in } \Omega \setminus B_\rho(x_0)\\
v & \text{ in } B_\rho(x_0)
\end{cases},
\qquad \tilde u_j=u_j \quad \forall j\neq i.
\]
Due to \eqref{2551}, it belongs to $H_\infty$, so that by minimality $J_\infty(\mf{u}) \le J_\infty(\tilde{\mf{u}})$. On the other hand, by the definition of harmonic extension we have also $J_\infty(\tilde{\mf{u}}) < J_\infty(\mf{u})$ (the strict inequality comes from the fact that $v \not \equiv u_i$ in $B_\rho(x_0)$), a contradiction. 
\end{proof}

\begin{remark}
In \cite{CaPaQu}, the authors proved harmonicity, local Lipschitz continuity, and exterior sphere condition for limits of any sequence of solutions to \eqref{eq:system}. Nevertheless, the result here is not contained in \cite{CaPaQu}, since we establish harmonicity, Lipschitz continuity, and exterior sphere condition for any minimizer of $\inf_{H_\infty} J_\infty$, independently on wether it can be approximated with a sequence of solutions to \eqref{eq:system} or not. Also, it is worth to point out that the approach is completely different: while in \cite{CaPaQu} the authors proceed with careful uniform estimates for viscosity solution of \eqref{syst p}, here we use the variational structure of the limit problem. 
\end{remark}

\subsection{Lipschitz continuity of the minimizers}\label{sec: Lip}
In this subsection we show that the solutions of problem (A) are Lipschitz continuous inside $\Omega$, which is the highest regularity one can expect for the minimizers of $J_\infty$ (by the Hopf lemma). This is a consequence of the following general statement.

\begin{theorem}\label{thm: lip general}
Let $\Lambda$ be a domain of $\R^N$, and let $A \subset \Lambda$ be an open subset, satisfying the $r$-uniform exterior sphere condition in $\Lambda$: for any $x_0 \in \pa A \cap \Lambda$ there exists a ball $B$ with radius $r$ which is exterior to $A$ and tangent to $\pa A$ at $x_0$, i.e.
\[
A \cap B = \emptyset \quad \text{and} \quad x_0 \in \overline{A} \cap \overline{B}.
\]
Let $f \in L^\infty(\Lambda)$, and let $u \in H^1(\Lambda) \cap L^\infty(\Lambda)$ satisfy
\begin{equation}\label{eq u}
\begin{cases}
-\Delta u = f & \text{in $A$} \\
u = 0 & \text{a.e. in $\Lambda \setminus A$}
\end{cases}
\end{equation}
Then $u$ is locally Lipschitz continuous in $\Lambda$, and for every compact set $K \Subset \Lambda$ there exists a constant $C=C(r,N,K)>0$ such that 
\[
\|\nabla u\|_{L^\infty(K)} \le  C\left(  \|u\|_{L^\infty(\Lambda)} + \|f\|_{L^\infty(\Lambda)}\right).
\]
\end{theorem}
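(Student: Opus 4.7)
The proof follows the classical pattern of combining a boundary barrier with an interior gradient estimate. Inside $A$, standard $L^p$-theory for $-\Delta u=f$ with $f\in L^\infty$ yields $u\in W^{2,p}_{\mathrm{loc}}(A)$ for every $p<\infty$, and in particular the interior gradient bound
\[
|\nabla u(x)|\le \frac{C_N}{d(x)}\,\|u\|_{L^\infty(B_{d(x)/2}(x))}+C_N\,d(x)\,\|f\|_{L^\infty(\Lambda)},\qquad d(x):=\dist(x,\partial A),
\]
holds for each $x\in A$. To upgrade this to a uniform Lipschitz bound on $K\Subset \Lambda$ it suffices to establish the \emph{linear boundary decay}
\[
|u(x)|\le C(r,N)\,\bigl(\|u\|_{L^\infty(\Lambda)}+\|f\|_{L^\infty(\Lambda)}\bigr)\,d(x)
\]
for every $x\in A\cap K$ whose nearest boundary point lies in $\Lambda$.

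\textbf{Radial barrier and linear decay.} Fix $x_0\in \partial A\cap \Lambda$ and let $B_r(y_0)$ be the exterior ball given by the hypothesis, so that $x_0\in \overline{B_r(y_0)}\cap \overline{A}$. In the annulus $\Ocal':=B_{2r}(y_0)\setminus \overline{B_r(y_0)}$ let $w$ be the unique radial classical solution of
\[
-\Delta w=\|f\|_{L^\infty(\Lambda)}\ \text{in }\Ocal',\qquad w=0\ \text{on }\partial B_r(y_0),\qquad w=\|u\|_{L^\infty(\Lambda)}\ \text{on }\partial B_{2r}(y_0).
\]
An explicit ODE computation shows that $w\ge 0$, $w$ is smooth up to $\overline{\Ocal'}$, and
\[
w(x)\le C(r,N)\,\bigl(\|u\|_{L^\infty(\Lambda)}+\|f\|_{L^\infty(\Lambda)}\bigr)\,\bigl(|x-y_0|-r\bigr),\qquad x\in \overline{\Ocal'}.
\]
Since $-\Delta(u-w)\le 0$ in $\Ocal'\cap A$, and $u$ has zero trace on $\partial A$ (it lies in $H^1(\Lambda)$ and vanishes a.e.\ outside $A$), while $u\le \|u\|_{L^\infty(\Lambda)}=w$ on $\partial B_{2r}(y_0)$, the weak maximum principle applied to $(u-w)^+$ yields $u\le w$ in $\Ocal'\cap A$; the same argument for $-u$ gives $|u|\le w$. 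Combined with $d(x)\le |x-y_0|-r$ for $x\in A\cap\Ocal'$, this proves the linear decay with a constant independent of $x_0$.

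\textbf{Patching and conclusion.} Let $K\Subset \Lambda$ and set $\rho:=\min\{r/2,\,\dist(K,\partial\Lambda)/2\}$. For $x\in K\cap A$ two cases arise. If $d(x)\ge \rho$, the interior gradient bound directly yields $|\nabla u(x)|\le C(r,N,K)(\|u\|_{L^\infty(\Lambda)}+\|f\|_{L^\infty(\Lambda)})$. If instead $d(x)<\rho$, the nearest boundary point $y^*\in \partial A$ satisfies $|x-y^*|<\dist(K,\partial\Lambda)/2$, hence $y^*\in \Lambda$ and the barrier constructed at $y^*$ is available; one checks that $B_{d(x)/2}(x)\subset \Ocal'$, so the linear decay gives $\|u\|_{L^\infty(B_{d(x)/2}(x))}\le C(r,N)(\|u\|_{L^\infty(\Lambda)}+\|f\|_{L^\infty(\Lambda)})\cdot \tfrac{3}{2}d(x)$, and inserting this into the interior bound makes the factor $d(x)$ cancel. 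In both cases $|\nabla u(x)|\le C(r,N,K)(\|u\|_{L^\infty(\Lambda)}+\|f\|_{L^\infty(\Lambda)})$; since $u\equiv 0$ outside $A$ and the linear decay controls $u$ near $\partial A$, a standard path argument converts the $L^\infty$ gradient bound into a Lipschitz estimate on $K$. The main obstacle is precisely that the exterior sphere condition is only assumed at points of $\partial A\cap \Lambda$: near $\partial\Lambda$ no barrier is available, and it is this issue that forces the statement to be limited to compact subsets of $\Lambda$ and produces the dependence on $K$ in the final constant.
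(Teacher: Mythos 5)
Your overall strategy (a barrier at the exterior sphere giving linear decay $|u|\lesssim \dist(\cdot,\partial A)$, then interior gradient estimates on balls $B_{d(x)/2}(x)\subset A$) is exactly the one used in the paper, and the interior and patching steps are fine. However, the barrier step as written has a genuine gap: your comparison region is $A\cap\mathcal{O}'$ with $\mathcal{O}'=B_{2r}(y_0)\setminus\overline{B_r(y_0)}$, a set of diameter $\ge 2r$ centered at $y_0$ rather than a small neighbourhood of $x_0$, and nothing guarantees that it is contained in $\Lambda$. On the portion of $\partial\bigl(A\cap B_{2r}(y_0)\bigr)$ lying on $\partial\Lambda$ you have no usable boundary information: the identity ``$u=0$ a.e.\ in $\Lambda\setminus A$'' gives a zero trace only on $\partial A\cap\Lambda$, not on $\partial A\cap\partial\Lambda$, and the crude bound $u\le\|u\|_{L^\infty(\Lambda)}$ does not help there because your $w$ equals $\|u\|_{L^\infty}$ only on the outer sphere $\partial B_{2r}(y_0)$ and is small near $\partial B_r(y_0)$. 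So the inequality $u\le w$ on $\partial(A\cap\mathcal{O}')$, hence the maximum principle, is not justified. This is not a pathological scenario: in the intended application $r=1$ and $\Lambda=\Omega$ is a bounded domain whose boundary $A$ may well reach, so $B_{2r}(y_0)\not\subset\Lambda$ is the generic case.

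The obvious repair --- intersecting the comparison domain with a ball $B_\rho(x_0)\Subset\Lambda$ --- does not work with your barrier, and this is where the missing idea lies. After localizing, the new boundary piece $A\cap\partial B_\rho(x_0)$ requires $w\ge\|u\|_{L^\infty}$ there; but your $w$ vanishes on all of $\partial B_r(y_0)$, and $A\cap\partial B_\rho(x_0)$ can come arbitrarily close to (indeed touch) $\partial B_r(y_0)$ away from the tangency point, so $w$ has no positive lower bound on that piece. The paper's construction is designed precisely to avoid this: it replaces $B_r(y_0)$ by the ball $B_{r/2}(z_0)$ internally tangent to it at $x_0$ (with $z_0$ the midpoint of $x_0y_0$) and uses as barrier the Kelvin transform $w^*_{r/2}$ of the paraboloid, which vanishes only on $\partial B_{r/2}(z_0)$. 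Since $\partial B_{r/2}(z_0)\cap\partial B_r(y_0)=\{x_0\}$, the set $A\cap\partial B_\rho(x_0)$ stays at distance $\ge r/2+\delta(r,\rho,N)$ from $z_0$, so $w^*_{r/2}\ge m(r,\rho,N)>0$ there and can be scaled to dominate $\|u\|_{L^\infty}$ on the whole localized boundary. You need either this shrinking-of-the-tangent-ball device (with any barrier vanishing on the smaller sphere and growing linearly off it) or some equivalent way of keeping the comparison domain both compactly inside $\Lambda$ and away from the zero set of the barrier; without it the linear decay estimate, and hence the Lipschitz bound, is not established.
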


For the sake of generality, we required no sign condition on the function $u$, even though we will apply the result only to nonnegative solutions.

\begin{corollary}\label{thm lip}
Let $\mf{u}$ be any minimizer of $J_\infty$ in $H_\infty$. Then $\mf{u}$ is locally Lipschitz continuous in $\Omega$.
\end{corollary}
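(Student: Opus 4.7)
The plan is to reduce the statement to a direct application of Theorem \ref{thm: lip general}, applied separately to each component $u_i$ on the open set $A_i$ where $u_i>0$. The three ingredients I need to assemble are: (i) $u_i$ satisfies an equation of the type $-\Delta u_i = f_i$ in $A_i$ with $u_i\equiv 0$ on $\Omega\setminus A_i$; (ii) $A_i$ satisfies a uniform exterior sphere condition inside $\Omega$; (iii) $u_i$ is in $H^1(\Omega)\cap L^\infty(\Omega)$.

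For (i), recall the decomposition $D_i=A_i\cup N_i$ introduced before Lemma \ref{lem: harmonicity}. By that lemma, $u_i$ is harmonic in each connected component of $D_i$, hence $-\Delta u_i=0$ in $A_i$, and by construction $u_i\equiv 0$ on $N_i\cup C_i$. Since by Lemma \ref{lem: porosity} the set $\pa D_i$ has zero Lebesgue measure and $\Omega\setminus \overline{A_i}\subset N_i\cup C_i\cup \pa D_i$, up to choosing a representative we have $u_i=0$ a.e. in $\Omega\setminus A_i$. This places us exactly in the setting of \eqref{eq u} with $\Lambda=\Omega$, $A=A_i$ and $f\equiv 0$.

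For (ii), I want to show that the $1$-uniform exterior sphere condition proved for $D_i$ in Lemma \ref{lem: ext sphere} is automatically inherited by $A_i$. The key observation is that $A_i$ and $N_i$ are unions of connected components of the open set $D_i$: if $x_0\in \pa A_i\cap \Omega$ belonged to $D_i$, then a small ball around $x_0$ would sit inside a single connected component of $D_i$, forcing $x_0\in A_i$ or $x_0\in N_i$, both contradicting $x_0\in \pa A_i$. Therefore $\pa A_i\cap \Omega\subset \pa D_i\cap \Omega$, and the exterior ball given by Lemma \ref{lem: ext sphere} at such $x_0$ is a fortiori exterior to $A_i\subset D_i$. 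Ingredient (iii) is immediate: $u_i\in H^1(\Omega_1)$ by construction and $\|u_i\|_{L^\infty(\Omega)}\le \|f_i\|_{L^\infty(\pa\Omega)}$ by the maximum principle used in the proof of Theorem \ref{prop:existence}.

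With (i)–(iii) in hand, Theorem \ref{thm: lip general} gives, for every compact $K\Subset \Omega$, a constant $C=C(N,K)$ such that
\[
\|\nabla u_i\|_{L^\infty(K)} \le C\,\|u_i\|_{L^\infty(\Omega)},
\]
and summing over $i=1,\dots,k$ yields the local Lipschitz continuity of $\mathbf{u}$ in $\Omega$. I do not expect any substantial obstacle at this level, since the entire work has been deferred to Theorem \ref{thm: lip general}; the only point requiring a little care is the elementary topological remark that the connected components of the open set $D_i$ are mutually separated, so that the exterior sphere property transfers from $D_i$ to $A_i$.
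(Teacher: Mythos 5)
Your proposal is correct and follows exactly the paper's route: the paper's proof of this corollary is the one-line application of Theorem \ref{thm: lip general} to the harmonic functions $u_i$ with $A=A_i$, $\Lambda=\Omega$, $r=1$ and $f\equiv 0$, and your verifications of the hypotheses (in particular that $\pa A_i\cap\Omega\subset \pa D_i\cap\Omega$ because $A_i$ is a union of connected components of $D_i$, so the exterior sphere condition of Lemma \ref{lem: ext sphere} transfers, and that $u_i=0$ a.e.\ off $A_i$ by Lemma \ref{lem: porosity}) are precisely the details the paper leaves implicit. The only slightly loose point is your justification of $u_i\in L^\infty(\Omega)$ for an \emph{arbitrary} minimizer: the maximum principle invoked in the proof of Theorem \ref{prop:existence} is applied to the subharmonic approximating solutions $u_{i,\beta}$, not to $\mf{u}$ itself, so for a general minimizer one should instead observe, e.g., that truncating $u_i$ at the level $\|f_i\|_{L^\infty(\Omega_1\setminus\Omega)}$ produces an admissible competitor with no larger energy, which forces the bound; the paper glosses over this in the same way, so this is not a substantive gap.
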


\begin{proof}
We apply Theorem \ref{thm: lip general} to the harmonic functions $u_i$ in $A:=A_i$, with $\Lambda:=\Omega$ and $r=1$.
\end{proof}

The proof of Theorem \ref{thm: lip general} is based upon a simple barrier argument. For any $R>0$, let us define
\[
w_R(x):= \frac{1}{2N}(R^2-|x|^2)^+  \quad \implies \quad \begin{cases}
-\Delta w_R =1 & \text{in $B_R$} \\
w_R=0 & \text{in $\R^N \setminus B_R$},
\end{cases}
\]
and let 
\begin{equation}\label{def w *}
w_R^*(x):= \left(\frac{R}{|x|}\right)^{N-2} w_R\left(\frac{R^2}{|x|^2}x\right) = \frac{R^N}{2N |x|^N} \left( |x|^2-R^2 \right)^+
\end{equation}
be its Kelvin transform with respect to the sphere of radius $R$. It is not difficult to check that 
\begin{equation}\label{delta w r}
-\Delta w_R^*(x) = -\left(\frac{R}{|x|}\right)^{N+2} \Delta w_R\left(\frac{R^2}{|x|^2}x\right) = \left(\frac{R}{|x|}\right)^{N+2}.
\end{equation}
With this preliminary observation, we can easily prove the following estimate:

\begin{lemma}
Let $x_0 \in \pa A \cap \Lambda$, and let $\rho>0$ be such that $B_\rho(x_0) \Subset \Lambda$. Under the assumptions of Theorem \ref{thm: lip general}, there exists a constant $C>0$ depending on the dimension $N$, on $r$ and on $\rho$, such that
\[
|u(x)| \le C \left( \|u\|_{L^\infty(\Lambda)} + \|f\|_{L^\infty(\Lambda)}\right) |x-x_0| \qquad \forall x \in B_{\rho}(x_0).
\]
\end{lemma}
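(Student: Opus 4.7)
The plan is a standard barrier argument using the Kelvin transform $w_r^*$ already introduced in \eqref{def w *}. By the $r$-uniform exterior sphere condition, there exists $y_0$ with $|y_0 - x_0| = r$ and $B_r(y_0) \cap A = \emptyset$. We may assume $\rho < r$, so that $y_0 \notin \overline{B_\rho(x_0)}$; otherwise the bound for $|x-x_0| \ge \rho' < r$ can be patched with the trivial estimate $|u(x)| \le \|u\|_{L^\infty(\Lambda)} \le \|u\|_{L^\infty(\Lambda)} |x-x_0|/\rho'$ valid on $B_\rho(x_0) \setminus B_{\rho'}(x_0)$.

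Set $R := r + \rho$, so that $B_\rho(x_0) \subset \overline{B_R(y_0)}$, and consider the barrier $V(x) := a\, w_r^*(x - y_0)$, with $a > 0$ to be chosen. In view of \eqref{delta w r}, $-\Delta V(x) = a(r/|x-y_0|)^{N+2} \ge a(r/R)^{N+2}$ in the annulus $\{r < |x-y_0| < R\}$, while on $\partial B_R(y_0)$ one has $V = a \cdot \frac{r^N(R^2-r^2)}{2N R^N}$. Choosing
\[
a := C_1(N,r,\rho)\bigl(\|u\|_{L^\infty(\Lambda)} + \|f\|_{L^\infty(\Lambda)}\bigr)
\]
with $C_1$ large enough yields simultaneously $-\Delta V \ge \|f\|_{L^\infty(\Lambda)}$ in the annulus and $V \ge \|u\|_{L^\infty(\Lambda)}$ on $\partial B_R(y_0)$.

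Next, the weak maximum principle is applied to $u - V$ on the open set $U := A \cap B_R(y_0)$. In $U$ we have $-\Delta(u-V) = f - (-\Delta V) \le 0$ in distributional sense; on $\partial U$ the inequality $u \le V$ holds because $V \ge 0 = u$ on $\partial A \cap B_R(y_0) \subset \{|x-y_0| \ge r\}$ (using $B_r(y_0) \cap A = \emptyset$), and $V \ge \|u\|_{L^\infty(\Lambda)} \ge u$ on $\partial B_R(y_0) \cap \overline{A}$. Since $u \in H^1(\Lambda)$ with $u = 0$ a.e.\ in $\Lambda \setminus A$, the extension of $(u-V)^+$ by zero outside $U$ belongs to $H^1_0(U)$, which validates the weak maximum principle and gives $u \le V$ a.e.\ in $U$. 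Running the same argument for $-u$ (which satisfies $-\Delta(-u) = -f$ in $A$) produces $|u| \le V$ a.e.\ in $U$.

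It remains to estimate $V$ near $x_0$. Since $V$ is smooth in $\R^N \setminus \{y_0\}$, vanishes at $x_0$, and $\overline{B_\rho(x_0)} \cap \{|x-y_0| \ge r\}$ is bounded away from $y_0$, a direct computation yields $|\nabla V| \le C_2(N,r,\rho)\, a$ on this set, hence $V(x) \le C_2 a\,|x-x_0|$ there. On the complementary region $B_\rho(x_0) \cap B_r(y_0)$ we have $x \notin A$, so $u(x) = 0$ a.e.\ and the bound is trivial. Combining these estimates gives $|u(x)| \le C(N,r,\rho)(\|u\|_{L^\infty(\Lambda)} + \|f\|_{L^\infty(\Lambda)})\,|x-x_0|$ a.e.\ in $B_\rho(x_0)$, as claimed. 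The main technical point is the application of the weak maximum principle on the possibly irregular set $U$; this works precisely because the hypothesis $u = 0$ a.e.\ in $\Lambda\setminus A$ provides the zero extension of $(u-V)^+$ needed for $H^1_0(U)$.
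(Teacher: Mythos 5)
Your overall strategy (a Kelvin-transform barrier combined with the exterior sphere condition) is the same as the paper's, but your choice of comparison domain introduces a genuine gap. You run the maximum principle on $U = A \cap B_R(y_0)$ with $R = r+\rho$, and you verify $u \le V$ on $\pa U$ only on the portion lying on $\pa A$ (where you invoke $u=0$) and on the portion lying on $\pa B_R(y_0)$ (where $V \ge \|u\|_{L^\infty(\Lambda)}$). The problem is that the hypothesis only gives $B_\rho(x_0)\Subset \Lambda$; nothing guarantees $B_{r+\rho}(y_0)\subset\Lambda$, and even the exterior ball $B_r(y_0)$ itself need not lie in $\Lambda$. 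Consequently $\pa U$ may contain a portion of $\pa A\cap\pa\Lambda$, where ``$u=0$ in the trace sense'' is not available (it comes from $u=0$ a.e.\ in $\Lambda\setminus A$, which says nothing across $\pa\Lambda$) and where $V$ is strictly below $\|u\|_{L^\infty(\Lambda)}$, since such points are interior to $B_R(y_0)$ and $V$ is radially increasing. The comparison therefore cannot be closed, and the same obstruction invalidates the claim that $(u-V)^+$ extends by zero to an element of $H^1_0(U)$: the extension across $\pa\Lambda$ is uncontrolled. This matters in the actual application, where the touching point $x_0\in\pa A$ may be arbitrarily close to $\pa\Omega$ while $r=1$ is fixed.

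The obvious repair --- intersecting the comparison domain with $B_\rho(x_0)$ so that it stays compactly inside $\Lambda$ --- does not work with your barrier: $A\cap\pa B_\rho(x_0)$ comes arbitrarily close to the sphere $\pa B_r(y_0)$ (the two spheres intersect, since $|x_0-y_0|=r$ and $\rho<2r$), and $w_r^*(\cdot-y_0)$ vanishes there, so there is no uniform lower bound $V\ge\|u\|_{L^\infty(\Lambda)}$ on the outer boundary. This is precisely the difficulty the paper's proof is engineered to avoid: it takes the Kelvin transform with respect to the ball $B_{r/2}(z_0)$ centered at the midpoint $z_0$ of the segment $x_0\,y_0$. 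Since $B_{r/2}(z_0)\subset B_r(y_0)$ with tangency only at $x_0$, the set $A\cap\pa B_\rho(x_0)$ stays at distance at least $r/2+\delta(r,\rho,N)$ from $z_0$, which yields the uniform lower bound $m(r,\rho,N)>0$ for the barrier on the outer boundary while keeping the whole comparison region inside $B_\rho(x_0)\Subset\Lambda$. You would need this device (or an equivalent one) to complete your argument; the remaining steps of your proof (choice of the constant $a$, the Lipschitz estimate for the barrier near $x_0$, and the symmetric argument for $-u$) are fine.
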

\begin{proof}
Let $y_0 \in \R^N$ be the center of the exterior sphere in $x_0$: 
\[
A \cap B_r(y_0) = \emptyset \quad \text{and} \quad x_0 \in \overline{A} \cap \overline{B_r(y_0)}
\]
Let $z_0$ be the medium point on the segment $x_0 \, y_0$.
Up to a rigid motion, we can suppose that $z_0=0$ and that $x_0= (0',r/2)$, where $0'$ denotes the $0$ vector in $\R^{N-1}$. In this setting, we aim at proving that $u \le w_{r/2}^*$ in $B_{\rho}(x_0) \cap A$, with $w_{r/2}^*$ defined by \eqref{def w *}. Since $u=0$ a.e. in $\Omega \setminus A$, we have (in the sense of traces) that $u=0$ on $\pa A \cap \overline{B_{\rho}(x_0)}$. Moreover, since $B_{r/2}(z_0) \subset B_r(y_0)$, and $\pa B_{r/2}(z_0) \cap \pa B_r(y_0) =\{x_0\}$, there exists a value $\delta = \delta (r,\rho, N)$ (independent on the point $x_0$) such that $\dist(z_0, A \cap \pa B_{\rho}(x_0)) \ge \dist(z_0,\partial B_r(y_0)\cap \partial B_\rho(x_0))> r/2+\delta$. Hence
\[
\inf_{A \cap \pa B_{\rho}(x_0)} w_{r/2}^* \ge m( r,\rho, N )>0,
\]
and we can define
\[
\varphi(x):=\left( \frac{\|u\|_{L^\infty(\Omega)}}{m( r,\rho, N )}+ \left(\frac{2\rho}{r}\right)^{N+2}\|f\|_{L^\infty(\Lambda)} \right) w_{r/2}^*(x).
\]
It is now not difficult to check that
\[
\begin{cases}
-\Delta (\varphi-u) \ge 0 & \text{in $A \cap B_{\rho}(x_0)$} \\
(\varphi-u) \ge 0 & \text{on $\pa (A \cap B_{\rho}(x_0))$}
\end{cases}
\]
Indeed, in $A\cap B_\rho(x_0)$, by recalling  \eqref{delta w r},
\[
-\Delta u=f\leq \|f\|_{L^{\infty}(\Lambda)} \quad \text{ and } \quad  -\Delta w_{r/2}^*=\left(\frac{r}{2|x|}\right)^{N+2}\geq \left(\frac{r}{2\rho}\right)^{N+2}
\]
The boundary $\pa (A \cap B_{\rho}(x_0))$ splits into two parts. On the first part $\pa A \cap \overline{B_{\rho}(x_0)}$ we know that $u=0$ in the sense of traces, and since $\varphi \ge 0$ there, we have $\varphi-u\ge 0$ on $\pa A \cap \overline{B_{\rho}(x_0)}$ in the sense of traces. On the remaining part $A \cap \pa B_{\rho(x_0)}$, the function $u$ can be evaluated point-wisely, since in the interior of $A$ the function $u$ is of class $\mathcal{C}^{1,\alpha}$; therefore, it makes sense to write that $u(x) \le \|u\|_{L^\infty(\Omega)} \le \varphi$ for any $x \in A \cap \pa B_{\rho(x_0)}$. All together, we obtain that $u \le \varphi$ on $\pa (A \cap B_{\rho}(x_0))$ in the sense of traces.

In conclusion, we have $u \le \varphi$ in $A \cap B_{\rho}(x_0)$ by the maximum principle. Observing that 
\begin{align*}
\frac{r^N}{2^{N+1} N |x|^N} \left( |x|^2-\left(\frac{r}2\right)^2 \right) &= \frac{r^N}{2^{N+1}N |x|^N} \left(|x|+\frac{r}{2}\right)\left(|x|-\frac{r}{2}\right)\\
& \le \frac{r^N}{2^{N+1}N(r/2)^N}\left(\rho+\frac{r}{2}\right) \left(|x|-|x_0|\right)\le \frac{2^{N-1}(2\rho+r)}{N}|x-x_0|.
\end{align*}
for every $x \in B_{\rho}(x_0)$, we obtain the desired upper estimate for $u$. Arguing in the same way on $-u$, we obtain also the lower estimate, and the proof is complete. 
\end{proof}

As an immediate consequence:

\begin{corollary}\label{lem: 5 gen 1}
For every compact set $K \Subset \Lambda$ there exists $C=C(K,r,N)>0$ such that
\[
|u(x)| \le C \left( \|u\|_{L^\infty(\Lambda)} + \|f\|_{L^\infty(\Lambda)}\right) \dist(x,\pa A)
\]
whenever $x\in K$ with $\dist(x,\pa A) < \dist(K,\pa \Lambda)$.
\end{corollary}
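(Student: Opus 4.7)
The plan is to reduce the distance-to-boundary estimate to the pointwise bound of the preceding lemma via metric projection onto $\pa A$. Set $d:=\dist(K,\pa\Lambda)>0$ and fix $x\in K$ with $\dist(x,\pa A)<d$. If $x$ lies in $\Lambda\setminus A$, I take the representative of $u$ vanishing pointwise there, so $u(x)=0$ and the inequality is trivial; hence I may assume $x\in A$. Choose $x_{0}\in\pa A$ with $|x-x_{0}|=\dist(x,\pa A)$. Since $|x-x_{0}|<d\le\dist(x,\pa\Lambda)$, the point $x_{0}$ belongs to $\pa A\cap\Lambda$, so the hypotheses of the preceding lemma apply at $x_{0}$.

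The second step is to fix a radius $\rho$ that works uniformly in $x\in K$. I would split into two regimes. When $\dist(x,\pa A)\ge d/2$, the trivial upper bound
\[
|u(x)|\le \|u\|_{L^{\infty}(\Lambda)}\le \frac{2}{d}\|u\|_{L^{\infty}(\Lambda)}\,\dist(x,\pa A)
\]
already gives the desired inequality with constant $2/d$. When instead $\dist(x,\pa A)<d/2$, the reverse triangle inequality yields $\dist(x_{0},\pa\Lambda)\ge d-|x-x_{0}|>d/2$, so $B_{d/2}(x_{0})\Subset\Lambda$ and $x\in B_{d/2}(x_{0})$. Applying the preceding lemma at $x_{0}$ with $\rho:=d/2$ produces a constant $C$ depending only on $N$, $r$ and $d$, hence only on $N$, $r$ and $K$, such that
\[
|u(x)|\le C\bigl(\|u\|_{L^{\infty}(\Lambda)}+\|f\|_{L^{\infty}(\Lambda)}\bigr)|x-x_{0}|=C\bigl(\|u\|_{L^{\infty}(\Lambda)}+\|f\|_{L^{\infty}(\Lambda)}\bigr)\dist(x,\pa A).
\]
Taking the maximum of the two constants concludes.

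I do not anticipate any real obstacle: the whole analytic content is already encoded in the preceding lemma, and the corollary only rephrases that pointwise bound by choosing $x_{0}$ to be the metric projection of $x$ onto $\pa A$ and by fixing a uniform radius across $K$. The only mild subtlety is verifying that the projection point $x_{0}$ stays strictly inside $\Lambda$ and that one can pick a ball around $x_{0}$ of fixed radius sitting in $\Lambda$ and containing $x$, both of which follow from the hypothesis $\dist(x,\pa A)<\dist(K,\pa\Lambda)$ together with the two-case split above.
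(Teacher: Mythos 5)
Your proof is correct and follows essentially the same route as the paper's: project $x$ onto $\pa A$ at minimal distance and invoke the preceding lemma with a radius fixed in terms of $d=\dist(K,\pa \Lambda)$. Your two-case split on whether $\dist(x,\pa A)\ge d/2$ is in fact a welcome extra precaution, since it guarantees both that $x\in B_{d/2}(x_0)$ and that $B_{d/2}(x_0)\Subset\Lambda$, points the paper's one-line proof passes over silently.
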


\begin{proof}
Let $x\in A\cap K$ such that $\dist(x,\partial A)<\dist(K,\partial \Lambda)$. Then take $x_0\in \partial A$ such that $|x-x_0|=\dist(x,\partial A)$. Then we can apply the previous theorem to $B_{\dist(K,\pa \Lambda)/2}(x_0)$.
\end{proof}

We are ready to proceed with the:

\begin{proof}[Proof of Theorem \ref{thm: lip general}]
Recall that $-\Delta u=f$ in $A$, hence there the function $u$ is of class $\mathcal{C}^{1,\alpha}$. Since moreover $u \in H^1(\Lambda)$ and $\nabla u=0$ a.e. in $\Lambda \setminus A$,  it is sufficient to obtain a uniform estimate for $\nabla u$ in a neighborhood of $\partial A$ (and actually only in $A$). Notice that in $A$ it makes sense to consider point-wise values of the gradient of $u$. 

We use the notation $d_x:=\dist(x,\partial A)$, for every $x\in \Omega$. Take $x_0\in \partial A\cap \Lambda$ and let $\delta>0$ be small enough such that, considering the compact set
\[
\displaystyle K:= \overline{\bigcup_{x\in B_\delta(x_0)} B_{d_x}(x)}
\]
then
\[
\dist(x,\partial A)<\dist(K,\partial \Lambda)\qquad \forall x\in K.
\]
By Corollary \ref{lem: 5 gen 1}, there exists $C=C(K,N,r)>0$ such that
\begin{equation}
|u(x)| \le C\left(  \|u\|_{L^\infty(\Lambda)} + \|f\|_{L^\infty(\Lambda)}\right) d_x \qquad \forall x\in K.
\end{equation}
In particular, for every $x\in A\cap B_\delta(x_0)$, since $B_{d_{x}}(x)\subset K$, then
\begin{equation}\label{eq 5 gen 2}
\|u\|_{L^\infty(B_{d_x}(x))}\leq 2C\left(  \|u\|_{L^\infty(\Lambda)} + \|f\|_{L^\infty(\Lambda)}\right) d_x.
\end{equation}
Now, let 
\[
Q_x:= \left\{ y \in \R^N: |y_i-x_i|<\frac{d_x}{m}, \ i=1,\dots,N\right\},
\]
where $m>0$ is chosen so large that the cube $Q_x$ is contained in the ball $B_{d_x}(x)$ ($m>0$ is a universal constant, depending only on the dimension $N$). Since $B_{d_x}(x) \subset A$, then $-\Delta u=f$ in $B_{d_x}(x)$ and we can combine \eqref{eq 5 gen 2} with interior gradient estimates for the Poisson equation (see \cite[Formula 3.15)]{GiTr}), deducing that  
\[
|\nabla u(x)| \le \frac{Nm}{d_x} \sup_{\pa Q_x} |u| + \frac{ d_x}{2m} \sup_{Q_x} |f| \le C'\left(  \|u\|_{L^\infty(\Lambda)} + \|f\|_{L^\infty(\Lambda)}\right) \qquad \forall x\in A\cap B_\delta(x_0).
\qedhere \]
\end{proof}

\section{Free-boundary condition for problem (A)}\label{FreeBoundary}

In this section we prove Theorem \ref{thm:curvaturerelations}. We briefly recall the setting.

Let $x_0 \in \pa S_i \cap \Omega$, and let us assume that $\Gamma_i^R:=\partial S_i \cap B_R(x_0)$ is a smooth hypersurface, for some $R>0$. We suppose that, for a positive $\delta$, condition \eqref{eq:principalcurvatures} holds on $\Gamma_i^R$:
\[
\chi_1^i(x),\ldots, \chi_{N-1}^i(x) \le 1-\delta \qquad \forall x \in \Gamma_i^R,
\]
where $\chi_1^i, \dots, \chi_{N-1}^i$ denote the principal curvatures of $\pa S_i$. Without loss of generality, we can suppose that $\Gamma_i^R$ is a graph:
\[
\Gamma_i^R=\{(x',\psi(x')):\ x'\in B_R^{N-1}(x_0')\}, \quad \text{ and } \quad S_i\cap B_R(x_0)=\{(x',z)\in B_R(x_0):\ z\le \psi(x')\}
\] 
for a function $\psi:B_R^{N-1}(x_0')\to \R$, where $B_R^{N-1}(x_0')$ denotes the ball of radius $R$ in $\R^{N-1}$ centered at $x_0'=(x_0^1,\ldots, x_0^{N-1})$. We know from Theorem \ref{thm: properties minimizers}-(6) that there exists $j \neq i$ and $y_0 \in \pa \, \supp u_j$ such that $|x_0-y_0|=1$. 

The proof of Theorem \ref{thm:curvaturerelations} is divided into several steps. We start with the uniqueness and characterization of $y_0$.
\begin{lemma}\label{lem: uniqueness y0}
If $x \in \pa S_i \cap \Omega$ and $\pa S_i$ is smooth in a neighbourhood of $x$, then $y= x+ \nu_i(x)$ is the unique point in $\bigcup_{l \neq i} \pa \, \supp u_l$ at distance $1$ from $x$.
\end{lemma}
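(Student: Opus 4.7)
The plan is to reduce the lemma to the uniqueness of an exterior tangent ball of radius $1$ at a smooth boundary point of $S_i$.

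\emph{Existence.} Applying Theorem \ref{thm: properties minimizers}-(6) at the point $x\in\partial S_i\cap\Omega$ gives at least one $l\neq i$ and one $y\in\overline{B_1(x)}\cap\partial\,\supp u_l$. Since $x\in\partial S_i\subset\overline{S_i}\subset\supp u_i$ and $\dist(\supp u_i,\supp u_l)\ge 1$, we must have $|x-y|=1$.

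\emph{Uniqueness.} I would pick any $l\neq i$ and any $y\in\partial\,\supp u_l$ with $|x-y|=1$, and show $y=x+\nu_i(x)$. The key claim is that the open ball $B_1(y)$ is an exterior tangent ball to $S_i$ at $x$. Indeed $y\in\supp u_l$ (as the support is closed), so the segregation constraint gives $\dist(y,\supp u_i)\ge 1$, hence $B_1(y)\cap\supp u_i=\emptyset$ and in particular $B_1(y)\cap S_i=\emptyset$; on the other hand $x\in\partial B_1(y)\cap\overline{S_i}$. The smoothness of $\partial S_i$ near $x$ then takes over: writing $\partial S_i$ locally as a smooth graph with unit outward normal $\nu_i(x)$ and testing the condition $B_1(y)\cap S_i=\emptyset$ along rays $t\mapsto x+tv$ that enter $B_1(y)$, i.e.\ with $v\cdot(y-x)>0$, a first-order expansion of the defining function forces $\nu_i(x)\cdot v\ge 0$ on the open half-space $\{v\cdot(y-x)>0\}$; since $y-x$ and $\nu_i(x)$ are both unit vectors, this forces $y-x=\nu_i(x)$.

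The main obstacle is this last geometric step: the exterior sphere condition from Theorem \ref{thm: properties minimizers}-(3) by itself does not a priori rule out additional exterior tangent balls at a generic boundary point, but the smoothness assumption on $\partial S_i$ pins the outward normal direction down and thus closes the loophole.
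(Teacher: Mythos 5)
Your proof is correct and follows essentially the same route as the paper: existence of $y$ from Theorem \ref{thm: properties minimizers}-(6) plus the segregation constraint, and uniqueness by showing that any such $y$ determines an exterior tangent ball whose direction must coincide with $\nu_i(x)$ at a smooth boundary point. The only difference is cosmetic: where the paper invokes Federer's normal-cone theorem \cite[Theorem 4.8-(2)]{Fed} to conclude that $y-x$ lies in the normal cone, you carry out the first-order expansion by hand, which is a valid, self-contained substitute.
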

\begin{proof}
By Theorem \ref{thm: properties minimizers} (points (3) and (6)), we know that there exists a point $y \in \bigcup_{l \neq i} \pa \, \supp u_l$ such that
\[
|x-y|=1, \quad \text{and} \quad |x-z| \ge 1 \quad \text{for all }z \in \bigcup_{l \neq i} \pa \, \supp u_l.
\]
This means that $y-x \in Q:=\{v: \dist(x+v,S_i) = |v|\}$. By \cite[Theorem 4.8-(2)]{Fed}, $Q$ is a subset of the normal cone to $S_i$ in $x$, and since $\pa S_i$ is smooth in $x$, we deduce that $y-x= \nu_{i}(x)$. 
\end{proof}

The previous lemma implies that there exists a unique $j$ and a unique $y_0 \in \pa \, \supp u_j$ at distance $1$ from $x_0$.  In order to simplify the notation, let $i=1$ and $j=1$, and so $x_0\in \partial S_1\cap \Omega$, $y_0\in \partial \, \supp u_2$. Assume from now on that $y_0\in \Omega$, so that $y_0\in \partial S_2\cap \Omega$. We denote $\Gamma_{1}^R:=  \partial S_1\cap B_R(x_0)$ and $\Gamma_{2}^R:=\{x+\nu_1(x):\ x\in \Gamma_{1}^R\}$. Notice that by Lemma \ref{lem: uniqueness y0} and by continuity, we have that $y_0\in \Gamma_{2}^R\subset \partial S_2\cap \Omega$, where the last inclusion holds for sufficiently small $R>0$. 

\begin{lemma}
The set $\Gamma_{2}^R$ is a smooth hypersurface.
\end{lemma}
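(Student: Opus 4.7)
My plan is to realize $\Gamma_2^R$ as the image of the smooth map
\[
F\colon \Gamma_1^R \to \R^N, \qquad F(x) = x + \nu_1(x),
\]
and to show that, under the curvature assumption \eqref{eq:principalcurvatures}, $F$ is a smooth embedding once $R$ is chosen sufficiently small. Since $\Gamma_1^R$ is already a smooth hypersurface with smooth outward normal field $\nu_1$, the map $F$ is smooth; proving that $\Gamma_2^R = F(\Gamma_1^R)$ is a smooth $(N-1)$-dimensional manifold therefore reduces to checking that $dF$ has full rank at every point, together with injectivity of $F$.

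The key computation is the differential $dF_x$. For $x \in \Gamma_1^R$ and $v \in T_x \Gamma_1^R$, one has $dF_x(v) = v + d\nu_1|_x(v)$; since $|\nu_1|^2 \equiv 1$ forces $d\nu_1|_x(v) \perp \nu_1(x)$, the vector $d\nu_1|_x(v)$ actually lies in $T_x \Gamma_1^R$, and so $dF_x$ is an endomorphism of $T_x \Gamma_1^R$. Identifying $-d\nu_1|_x$ with the Weingarten (shape) operator $W_x$, whose eigenvalues are the principal curvatures $\chi_h^1(x)$, one obtains $dF_x = \id - W_x$, so that the eigenvalues of $dF_x$ on $T_x \Gamma_1^R$ are $1 - \chi_h^1(x)$ for $h = 1, \dots, N-1$. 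By the uniform curvature gap \eqref{eq:principalcurvatures}, each of them is bounded below by $\delta > 0$, so $dF_x$ is a linear isomorphism and $F$ is a smooth immersion, with tangent hyperplane $T_{F(x)} \Gamma_2^R = T_x \Gamma_1^R$ at every point --- exactly what one expects for parallel hypersurfaces.

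Once $F$ is known to be an immersion, the inverse function theorem yields, around each $x \in \Gamma_1^R$, a neighborhood on which $F$ is a diffeomorphism onto its image; a short Lipschitz estimate of the form $|F(x_1)-F(x_2)| \geq |x_1 - x_2|\bigl(1 - \mathrm{Lip}(\nu_1)\, R\bigr)$ then gives global injectivity on $\Gamma_1^R$ for $R$ small enough. This is consistent with the convention just preceding the lemma, where $R$ is allowed to be shrunk. Hence $F$ is a smooth embedding and $\Gamma_2^R$ inherits a smooth $(N-1)$-dimensional manifold structure from $\Gamma_1^R$.

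The main subtlety is precisely the strict gap $\delta > 0$ in \eqref{eq:principalcurvatures}: without it one could have $\chi_h^1(x_0) = 1$, which would make $dF_{x_0}$ degenerate and the parallel surface develop a cusp at $F(x_0)$ --- exactly the pathology described in the remark following Theorem \ref{thm:curvaturerelations}, where $\partial \supp u_j$ becomes cuspidal at $y_0$ when $\partial S_i$ coincides locally with the sphere $\partial B_1(0)$.
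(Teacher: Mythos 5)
Your argument is correct and follows essentially the same route as the paper's proof: there, $\Gamma_2^R$ is parametrized in graph coordinates and the Jacobian is shown to equal $\mathrm{Id}_{N-1}$ minus the curvature tensor of $\Gamma_1^R$, which is invertible precisely because of \eqref{eq:principalcurvatures} --- this is your identity $dF_x=\id-W_x$ written in coordinates. Your extra injectivity step addresses a point the paper leaves implicit, although the estimate $|F(x_1)-F(x_2)|\ge |x_1-x_2|\bigl(1-\mathrm{Lip}(\nu_1)\,R\bigr)$ is not quite right as stated (shrinking $R$ does not shrink $\mathrm{Lip}(\nu_1)$); global injectivity follows more cleanly from the uniqueness of the nearest-point projection in Lemma \ref{lem: uniqueness y0}.
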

\begin{proof}
 
The set $\Gamma_2^R$ can be parametrized by $\Phi:B_R^{N-1}(x_0') \to \R^N$, 
\[
\Phi(x')= (x',\psi(x'))+\nu_1(x',\psi(x'))=\left(x'-\frac{\nabla \psi(x')}{\sqrt{1+|\nabla \psi(x')|^2}},\psi(x')+\frac{1}{\sqrt{1+|\nabla \psi(x')|^2}}\right),
\] 
and hence we need to prove that $D\Phi(x')$ has maximum rank. We have
\begin{equation}\label{eq:Gamma_2_hypersurface}
D \left(x'-\frac{\nabla \psi(x')}{\sqrt{1+|\nabla \psi(x')|^2}}\right)=\textrm{Id}_{N-1}-D\left(\frac{\nabla \psi(x')}{\sqrt{1+|\nabla \psi(x')|^2}}\right),
\end{equation}
where $\textrm{Id}_{N-1}$ denotes the identity in $\R^{N-1}$. Observe that $D\left(\nabla \psi(x')/\sqrt{1+|\nabla \psi(x')|^2}\right)$ is the curvature tensor of $\Gamma_1^R$ at $(x',\psi(x'))$ (see for instance \cite[p.356]{GiTr}). Assumption \eqref{eq:principalcurvatures} implies that all its eigenvalues are strictly smaller than one. Then the determinant of \eqref{eq:Gamma_2_hypersurface} does not vanish, and the result follows.
\end{proof}

Observe that, with the previous notations,
\begin{equation}\label{eq:relation_between_normals}
\nu_1(x)=-\nu_2(x+\nu_1(x))\ \forall x\in \Gamma_{1}^R\quad \text{ and } \quad \nu_2(x)=-\nu_1(x+\nu_2(x))\ \forall x\in \Gamma_{2}^R.
\end{equation}
Let $\eta\in \mathcal{C}^\infty_{\rm c}(B_R(x_0))$ be a nonnegative test function. We define two deformations, one acting on $S_1$, and the other on $S_2$. The first one, which deforms $S_1$, is a function denoted by $F_{1,\eps}: \R^N \to \R^N$, $\eps\in [0,\bar \eps)$, such that, 
\[
F_{1,\eps}(x)=
\begin{cases}
x & \text{ if } x\not\in B_R(x_0)\\
x+\eps \eta(x) \nu_1(x) & \text{ if } x\in \Gamma_{1}^R,
\end{cases}
\]
extended to the whole $\R^N$ in such a way that $(\eps,x)\in [0,\bar \eps) \times  \R^N \mapsto F_{1,\eps}(x)$ is of class $\mathcal{C}^1$, and $F_{1,0}(\cdot)=\textrm{Id}$. We denote 
\[
S_{1,\eps}:=F_{1,\eps}(S_1):=S_1\cup \{x+s \eta(x)\nu_1(x):\ x\in \Gamma_{1}^R,\ 0\le s<\eps\}
\] 
 and  
 \[
\Gamma_{1,\eps}^R:=F_{1,\eps}(\Gamma_{1}^R)=\{x+\eps \eta(x) \nu_1(x):\ x\in \Gamma_{1}^R\}.
\]

\begin{lemma}
The set $\Gamma_{1,\eps}^R$ is a smooth hypersurface. Moreover, if we denote its exterior normal at a point $x+\eps \eta(x) \nu_1(x)$ (for $x\in \Gamma_{1}^R$) by $\nu^\eps(x)$, then $\eps\mapsto \nu^\eps(x)$ is differentiable at $\eps=0$ and 
\begin{equation}\label{eq:derivative_normal_orthogonal}
\left. \frac{d}{d\eps} \nu^\eps(x)\right|_{\eps=0} \text{ is orthogonal to } \nu_1(x), \text{ for every } x\in \Gamma_{1}^R.
\end{equation}
\end{lemma}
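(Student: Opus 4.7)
The plan is to follow the same template as the previous lemma, but in a much simpler form: instead of moving by distance $1$ along the normal, we move by an infinitesimal amount $\eps \eta(x)$, so the perturbation is close to the identity and regularity is inherited from $\Gamma_1^R$ by a continuity (inverse function theorem type) argument.

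Concretely, I would parametrize $\Gamma_{1,\eps}^R$ by
\[
\Phi_\eps(x')=\Phi_0(x')+\eps\,\tilde\eta(x')\,\tilde\nu_1(x'),\qquad x'\in B_R^{N-1}(x_0'),
\]
where $\Phi_0(x')=(x',\psi(x'))$ parametrizes $\Gamma_1^R$ and $\tilde\eta:=\eta\circ \Phi_0$, $\tilde\nu_1:=\nu_1\circ \Phi_0$ are smooth functions of $x'$. The map $(\eps,x')\mapsto \Phi_\eps(x')$ is jointly $\mathcal C^1$ (indeed smooth in $x'$ and affine in $\eps$), and
\[
D_{x'}\Phi_\eps(x')=D_{x'}\Phi_0(x')+\eps\, D_{x'}\bigl(\tilde\eta(x')\tilde\nu_1(x')\bigr).
\]
Since $D_{x'}\Phi_0(x')$ has rank $N-1$ everywhere, and the $(N-1)\times(N-1)$ minors of $D_{x'}\Phi_\eps$ depend continuously on $\eps$, by compactness of $\overline{B_R^{N-1}(x_0')}\cap\supp\tilde\eta$ we can find $\bar\eps>0$ so small that $D_{x'}\Phi_\eps$ has full rank $N-1$ uniformly in $x'$ and in $\eps\in[0,\bar\eps)$. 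This shows that $\Gamma_{1,\eps}^R$ is a smooth hypersurface.

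For the second assertion, I would compute the exterior unit normal via the wedge (Gram) formula
\[
\nu^\eps(\Phi_\eps(x'))=\frac{\partial_1 \Phi_\eps(x')\wedge\cdots\wedge \partial_{N-1}\Phi_\eps(x')}{|\partial_1 \Phi_\eps(x')\wedge\cdots\wedge \partial_{N-1}\Phi_\eps(x')|}.
\]
By the previous step the denominator is bounded away from $0$ for $\eps\in[0,\bar\eps)$, so $\nu^\eps$ is a $\mathcal C^1$ function of $\eps$ (with values in the unit sphere of $\R^N$), which gives differentiability at $\eps=0$. Finally, since $|\nu^\eps(x)|^2\equiv 1$ for every $\eps$, differentiating at $\eps=0$ yields
\[
2\,\nu^0(x)\cdot \tfrac{d}{d\eps}\nu^\eps(x)\Big|_{\eps=0}=0,
\]
and because $\nu^0(x)=\nu_1(x)$, this is exactly \eqref{eq:derivative_normal_orthogonal}.

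I do not anticipate any genuine obstacle: the full-rank step is routine once the parametrization is written down, and the orthogonality \eqref{eq:derivative_normal_orthogonal} is a generic fact about any one-parameter family of unit vectors, requiring no special use of the exterior-sphere or curvature hypotheses. The only point demanding a little care is to ensure that the smoothness parameter $\bar\eps$ can be chosen uniformly on $\supp\eta$, which is achieved by the compactness of the support of the test function $\eta\in \mathcal C^\infty_c(B_R(x_0))$.
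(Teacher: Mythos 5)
Your proof is correct and follows essentially the same route as the paper: the paper simply asserts that smoothness of $\Gamma_{1,\eps}^R$ and differentiability of $\eps\mapsto\nu^\eps(x)$ follow from the smoothness of $\Gamma_1^R$ and $\eta$, and then obtains \eqref{eq:derivative_normal_orthogonal} by differentiating the identity $|\nu^\eps(x)|^2=1$, exactly as you do. Your write-up merely fills in the routine full-rank and wedge-product details that the paper leaves implicit.
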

\begin{proof}
By the smoothness of $\Gamma_{1}^{R}$ and of the perturbation $\eta$, it follows that $\nu^\eps$ is differentiable in $\eps$ for $\eps$ small.
By deriving the identity $|\nu^\eps(x)|^2=1$ in $\eps$ for each $x\in \Gamma_{1}^R$, we have $\frac{d}{d\eps} \nu^\eps(x)\cdot \nu^\eps(x)=0$. Since $\nu^0(x)=\nu_1(x)$, the statement \eqref{eq:derivative_normal_orthogonal} follows.
\end{proof}

Now we consider an open neighbourhood $B_{y_0}^R$ of $y_0$ such that $B_{y_0}^R\cap \partial S_2=\Gamma_{2}^R$ and $\dist(B_{y_0}^R,\partial \Omega)>0$. In order to deform $S_2$, we take  $F_{2,\eps}:\R^N \to \R^N$, $\eps\in [0,\bar \eps)$, such that
\[
F_{2,\eps}(y)=
\begin{cases}
y & \text{ if } y\not\in B_{y_0}^R,\\
x + \eps \eta(x) \nu_1(x)+ \nu^\eps(x), & \text{ if } x=y+\nu_2(y),\ y\in \Gamma_{2}^R\\
\end{cases}
\]
extended to the whole $\R^N$ in such a way that $(\eps,x) \in [0,\bar \eps)\times \R^N \mapsto F_{2,\eps}(x)$ is of class $\mathcal{C}^1$, and $F_{2,0}(\cdot)=\textrm{Id}$. Define
\[
S_{2,\eps}:=F_{2,\eps}(S_2):=S_2\backslash \{ x + s \eta(x) \nu_1(x)+ \nu^\eps(x):\ x\in \Gamma_{1}^R,\ 0\le s<\eps\} 
\]
and 
\[
\Gamma_{2,\eps}^R:= F_{2,\eps}(\Gamma_{2}^R)=\{x + \eps \eta(x) \nu_1(x)+ \nu^s(x):\ x\in \Gamma_{1}^R \}.
\]
Notice that, since $\eta \ge 0$, we have $\Gamma_{2,\eps}^R \subset \overline{S_2}$ for every $\eps > 0$.

\begin{figure}
\centering
\includegraphics[width=\linewidth]{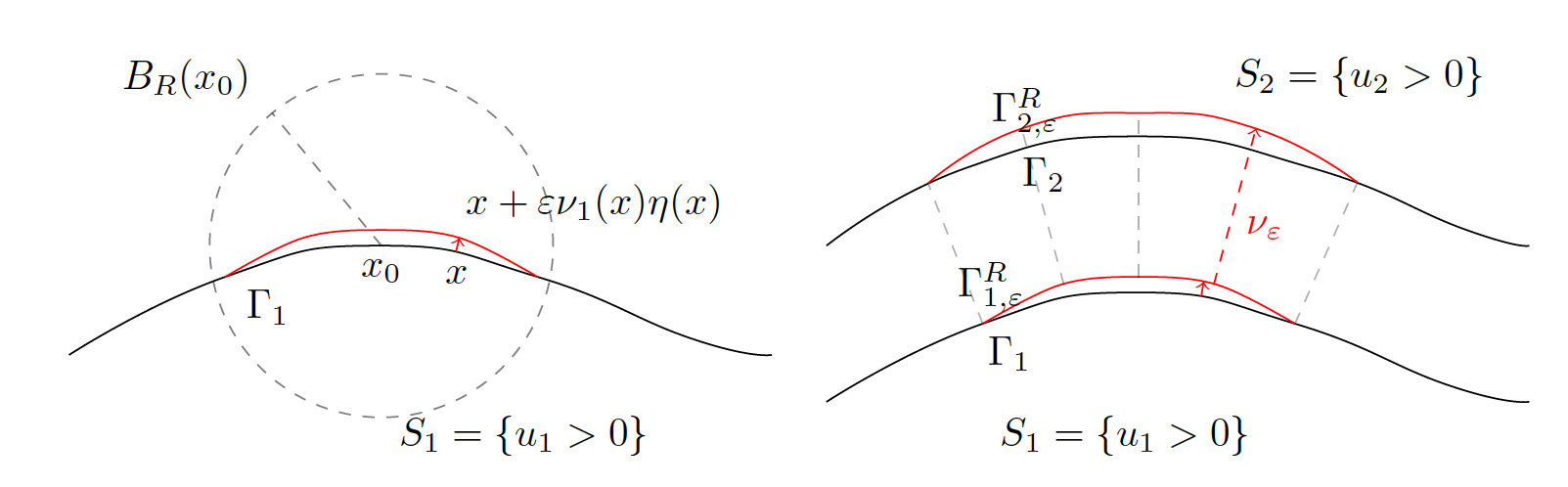}
\caption{The picture on the left represents the deformation acting on $S_1$. The picture on the right represents the deformation acting on $S_2$.}
\end{figure}

\begin{remark}\label{rem: gamma gamma eps}
We observe that the map $x \in \Gamma_1^R \mapsto x_\eps:= x+ \eps \eta(x) \nu_1(x) \in \Gamma_{1,\eps}^R$ is a diffeomorphism for $\eps>0$ small enough. 
For this reason, we can see the normal $\nu^\eps$ as defined on $\Gamma_{1,\eps}^R$, and use the notation 
\[
\nu^\eps(x_\eps):= \nu^\eps(x) \quad \iff x_\eps =x + \eps \eta(x) \nu(x).
\]  
\end{remark}

The crucial point in our argument is the following:

\begin{lemma}\label{lemma:dist_perturbedsupp}
We have $\dist(S_{1,\eps},S_{2,\eps})\ge 1$. Moreover, $\dist(S_{i,\eps},S_j)\ge 1$ for every $i\in \{1,2\}$, $j\neq 1,2$.
\end{lemma}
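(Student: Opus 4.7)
The plan is to verify the three distance inequalities separately. The simplest case is $\dist(S_{2,\eps},S_j)\ge 1$ for $j\notin\{1,2\}$, which follows immediately from the inclusion $S_{2,\eps}\subset S_2$ (the perturbation $F_{2,\eps}$ only removes points from $S_2$) together with the admissibility condition $\dist(S_2,S_j)\ge 1$ inherited from $\bu\in H_\infty$. The case $\dist(S_{1,\eps},S_j)\ge 1$ with $j\neq 1,2$ is handled via Lemma \ref{lem: uniqueness y0}: for each $x\in\Gamma_1^R$, the unique point of $\bigcup_{l\neq 1}\pa\supp u_l$ at distance $1$ from $x$ belongs to $\Gamma_2^R\subset\pa\supp u_2$, which forces $\dist(x,\supp u_j)>1$. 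Compactness of $\overline{\Gamma_1^R}$ upgrades this strict inequality to $\dist(\Gamma_1^R,\supp u_j)\ge 1+c_j$ for some $c_j>0$, and since $S_{1,\eps}\setminus S_1$ is contained in an $\eps\|\eta\|_\infty$-neighborhood of $\Gamma_1^R$, the bound $\dist(S_{1,\eps},S_j)\ge 1$ persists for $\eps$ small.

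The heart of the proof is the inequality $\dist(S_{1,\eps},S_{2,\eps})\ge 1$, which rests on the curvature hypothesis \eqref{eq:principalcurvatures}. By continuous dependence of $F_{1,\eps}$ on $\eps$, the principal curvatures of $\Gamma_{1,\eps}^R$ satisfy $\chi_h^\eps\le 1-\delta/2$ for $\eps$ small, and consequently the normal exponential map
\[
\Psi_\eps(x_\eps,t)=x_\eps+t\,\nu^\eps(x_\eps),\qquad (x_\eps,t)\in\Gamma_{1,\eps}^R\times(-a,1+a),
\]
is a diffeomorphism onto a tubular neighborhood of $\Gamma_{1,\eps}^R$ for some $a>0$, with the identity $\dist(\Psi_\eps(x_\eps,t),\Gamma_{1,\eps}^R)=|t|$ holding inside. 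By Remark \ref{rem: gamma gamma eps} and the definition of $\Gamma_{2,\eps}^R$, one has $\Gamma_{2,\eps}^R=\Psi_\eps(\Gamma_{1,\eps}^R\times\{1\})$. The crux is the claim that the removed strip $S_2\setminus S_{2,\eps}$ corresponds, in these tubular coordinates, to heights $t<1$, so that $S_{2,\eps}\cap B_{y_0}^R$ lies at heights $t\ge 1$ and hence $\dist(S_{2,\eps}\cap B_{y_0}^R,\Gamma_{1,\eps}^R)\ge 1$. Since the interior of $S_{1,\eps}$ lies on the opposite side of $\Gamma_{1,\eps}^R$, any straight segment joining $q\in S_{2,\eps}\cap B_{y_0}^R$ to $p\in S_{1,\eps}$ must cross $\Gamma_{1,\eps}^R$, yielding $|p-q|\ge 1$. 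Points $q\in S_{2,\eps}$ outside $B_{y_0}^R$ are treated by compactness exactly as in the previous paragraph, using $\dist(S_2\setminus B_{y_0}^R,\Gamma_1^R)\ge 1+c$ for some $c>0$ (again a consequence of Lemma \ref{lem: uniqueness y0}), and absorbing the $\eps$-perturbation of $\Gamma_1^R$.

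The main obstacle is verifying the identification of the strip in the tubular coordinates. Writing a strip point as $x_s+\nu^s(x_s)$ with $x_s=x+s\eta(x)\nu_1(x)$ belonging to the intermediate perturbed boundary $\Gamma_{1,s}^R$, and re-expressing everything in terms of $x_\eps\in\Gamma_{1,\eps}^R$, one finds that to leading order the strip point lies at tubular height $t=1-(\eps-s)\eta(x)<1$. This computation relies crucially on \eqref{eq:derivative_normal_orthogonal}: the first-order variation of $\nu^s$ is tangent to $\Gamma_{1,\eps}^R$ and therefore does not contribute in the normal direction. In other words, the construction of $F_{2,\eps}$ -- which displaces $\Gamma_2^R$ inward into $S_2$ by precisely the normal amount that $\Gamma_1^R$ was translated toward $\Gamma_2^R$ -- is designed to preserve the unit normal distance between the two boundary hypersurfaces, and the curvature bound \eqref{eq:principalcurvatures} guarantees that this local statement extends to a global distance estimate through the tubular parametrization.
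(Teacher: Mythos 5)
Your overall architecture coincides with the paper's: the cases $j\neq 1,2$ and the unperturbed portions of the boundaries are dispatched by the strict inequality $\dist(\Gamma_1^R,\supp u_j)>1$ (a consequence of Lemma \ref{lem: uniqueness y0}), compactness, and the smallness of the perturbation, while the inclusion $S_{2,\eps}\subset S_2$ settles the remaining easy case; the whole weight of the lemma then falls on showing that the perturbed interfaces $\Gamma^R_{1,\eps}$ and $\Gamma^R_{2,\eps}$ stay at distance $1$. Up to that reduction your argument is correct and matches the paper.

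The gap is in the central claim that the normal map $\Psi_\eps(x_\eps,t)=x_\eps+t\,\nu^\eps(x_\eps)$ satisfies $\dist(\Psi_\eps(x_\eps,t),\Gamma^R_{1,\eps})=|t|$ up to $t=1+a$. This is precisely the paper's property (C) --- uniqueness of the nearest-point projection onto $S_{1,\eps}$ for points at distance $1$ --- and it is the crux of the proof, not an automatic consequence of $\chi_h^\eps\le 1-\delta/2$. An upper bound on the principal curvatures only excludes focal points of the outward normal map before distance $(1-\delta/2)^{-1}>1$, i.e.\ it makes $\Psi_\eps$ an immersion; it does not by itself give injectivity of $\Psi_\eps$ nor the identity $\dist=|t|$, both of which can fail for reasons unrelated to curvature (two distinct, mutually distant points of a flat hypersurface can both realize the distance to an exterior point). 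Here the property does hold because $\Gamma^R_{1,\eps}$ is a graph over a ball of radius $R<1/2$, but establishing it requires an actual comparison argument: the paper intersects with the plane through $y,x_1,x_2$ and applies the planar ODE comparison of Lemma \ref{lem: curvatura piana} to exclude a second contact point of $\pa B_1(y)$ with the curve. Your proposal needs to supply such an argument (or an equivalent quantitative reach estimate for graph patches) rather than assert the tubular-neighborhood property. Relatedly, your identification of the removed strip with the region $\{t<1\}$ is justified only ``to leading order'': the expansion $t=1-(\eps-s)\eta(x)+o(\eps)$ gives nothing where $\eta(x)=0$ and is not uniform in $s\in[0,\eps)$, and in any case what is needed is the reverse inclusion, namely that every point of $S_2\cap B_{y_0}^R$ at height $t<1$ has been removed. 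The clean route, as in the paper, is to characterize $\{y\in S_{1,\eps}^{c}:\dist(y,\Gamma^R_{1,\eps})=1\}$ as exactly $\Gamma^R_{2,\eps}$ --- which again rests on property (C).
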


For the proof we will need the following elementary fact. 
\begin{lemma}\label{lem: curvatura piana}
Let $(x_1,y_1)$, $(x_2,y_2)$, two points on the lower semi-circle $\pa B_1^-:= \{x^2 + y^2 = 1, y <0\}$ in $\R^2$. Let $\gamma$ be the graph of a $\mathcal{C}^2$ function $f:[x_1,x_2] \to \R$, and let us suppose that:
\begin{itemize}
\item the curvature of $\gamma$ is strictly smaller than $1$;
\item $f(x_1)= y_1$, i.e. $(x_1,y_1)$ is the initial point of $\gamma$;
\item there exists $\rho>0$ such that $f(t) \le -\sqrt{1-t^2}$ for $t \in (x_1,x_1+\rho)$.
\end{itemize}
Then $f(x_2) < y_2$, i.e. $\gamma$ cannot contain any other point on $\pa B_1$.
\end{lemma}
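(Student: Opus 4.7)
The plan is to compare $f$ directly with $g(t) := -\sqrt{1-t^2}$ on $[x_1,x_2]$ via a simple maximum-principle argument applied to the difference $h := f-g$. A direct computation shows that the lower unit semicircle, viewed as the graph of $g$, has signed curvature
\[
\kappa_g(t) = \frac{g''(t)}{(1+g'(t)^2)^{3/2}} = 1 \qquad \forall t\in(-1,1),
\]
so the hypothesis on $\gamma$ reads $\kappa_f(t) < \kappa_g(t)$ throughout $[x_1,x_2]$. In this language, the hypotheses are $h(x_1)=0$ and $h \le 0$ on $(x_1,x_1+\rho)$, and the goal is $h(x_2)<0$.

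I would argue by contradiction: suppose $h(x_2)\ge 0$. The first step is to extract an interval on which $h$ vanishes at both endpoints and is nonpositive inside. Define
\[
t_1 := \sup\{\, t\in[x_1,x_2] : h\le 0 \text{ on } [x_1,t]\,\}.
\]
The local assumption forces $t_1 \ge x_1+\rho>x_1$, while continuity together with $h(x_2)\ge 0$ gives $h(t_1)=0$ (by maximality if $t_1<x_2$, by assumption if $t_1=x_2$). So on $I:=[x_1,t_1]$ we have $h\le 0$ and $h(x_1)=h(t_1)=0$.

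The second step is the local curvature comparison on $I$. If $h\equiv 0$ on $I$, then $f\equiv g$ there and $\kappa_f=\kappa_g=1$ on $I$, contradicting $\kappa_f<1$. Otherwise $h$ attains a strictly negative minimum at some interior point $t_0\in(x_1,t_1)$, where the second derivative test yields $h'(t_0)=0$ and $h''(t_0)\ge 0$. From $f'(t_0)=g'(t_0)$ we get $(1+f'(t_0)^2)^{3/2}=(1+g'(t_0)^2)^{3/2}$, hence
\[
\kappa_f(t_0) \;=\; \frac{f''(t_0)}{(1+f'(t_0)^2)^{3/2}} \;\ge\; \frac{g''(t_0)}{(1+g'(t_0)^2)^{3/2}} \;=\; \kappa_g(t_0) \;=\; 1,
\]
contradicting the curvature hypothesis on $\gamma$.

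The only genuinely delicate point is the bookkeeping in the first step: $h$ could a priori oscillate in sign on $(x_1+\rho, x_2)$, so one must be careful to locate an interval where $h$ is nonpositive with matching boundary values before invoking the pointwise comparison. Once the interval $I$ is identified, the rest is the standard second-derivative test, so I do not expect any serious analytic obstacle.
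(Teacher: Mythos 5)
Your proof is correct and follows essentially the same route as the paper: both compare $f$ with the lower semicircle $g(t)=-\sqrt{1-t^2}$, which solves the curvature ODE $v''=(1+(v')^2)^{3/2}$, and exploit the strict differential inequality $f''<(1+(f')^2)^{3/2}$. The only (harmless) difference is in the implementation of the comparison: the paper extracts the initial slope condition $f'(x_1)\le x_1/\sqrt{1-x_1^2}$ from the third hypothesis and appeals to a standard ODE comparison, while you use that hypothesis to isolate an interval on which $h=f-g$ is nonpositive with vanishing boundary values and conclude via the second-derivative test at an interior minimum --- a fully detailed and equally valid realization of the same argument.
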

\begin{proof}
In terms of $f$, the curvature of $\gamma$ is defined by 
\[
k(t):= \frac{f''(t)}{\left(1+(f'(t))^2\right)^{3/2}}.
\]
Thus, by assumption:
\[
f''(t) < \left(1+(f'(t))^2\right)^{3/2} \text{in $[x_1,x_2]$},\qquad  f'(x_1) \le \frac{x_1}{\sqrt{1-x_1^2}}\ \text{ and }\ f(x_1) = y_1.
\]
Recalling that $v(t)=-\sqrt{1-t^2}$ solves $v''=(1+(v')^2)^{3/2}$, the thesis follows by a comparison argument for solutions to ODEs.\end{proof}

\begin{proof}[Proof of Lemma \ref{lemma:dist_perturbedsupp}]
The second statement of the lemma comes from the fact that $\dist(S_i,S_j)\geq 1$ and $\dist(\Gamma_{i}^R,S_j)>1$ for $i=1,2$ and $j>2$. As for the first statement, observe that it is enough to show that
\[
\dist(\partial S_{1,\eps}\cap \Omega, \partial S_{2,\eps}\cap \Omega)\ge 1.
\]
By construction, $\partial S_{i,\eps}\setminus \Gamma^R_{i,\eps}= \partial S_i\setminus \Gamma_i^R$ for $i=1,2$, and since $\dist(\partial \, \supp u_1,\partial \, \supp u_2)=1$, then 
\[
\dist ( \partial S_{1,\eps}\setminus \Gamma^R_{1,\eps},\partial S_{2,\eps}\setminus \Gamma^R_{2,\eps})\ge 1.
\] 
Since every point in $\Gamma_i^R$ admits a unique point on $\partial S_j$ at distance exactly one, we have that $\dist(\Gamma_i^R,\partial S_j\setminus \Gamma_j^R)>1$ for every $i\neq j$, $i,j\in \{1,2\}$. Thus, by the continuity of the deformations $F_{1,\eps},F_{2,\eps}$, 
\[
\dist(\Gamma_{i,\eps}^R,\partial S_{j,\eps}\setminus \Gamma^R_{j,\eps})>1\qquad \forall i\neq j,\ i,j\in \{1,2\}.
\]
It remains to show that
\[
\dist(\Gamma_{1,\eps}^R,\Gamma_{2,\eps}^R)=1.
\]
This follows from the following property (we use the notation introduced in Remark \ref{rem: gamma gamma eps}):
\begin{itemize}
\item[(C)] there exists $\eps>0$ small enough such that any point in $y \in S_{1,\eps}^c$ such that $\dist(y, \Gamma^R_{1,\eps}) = 1$ has unique projection at minimal distance onto $S_{1,\eps}$, this projection lies in $\Gamma_{1,\eps}^R$, and moreover $y= x_\eps +  \nu^\eps(x_\eps)$ for some $x_\eps \in \Gamma_{1,\eps}^R$. 
\end{itemize}
Indeed, (C) implies, by definition of $\Gamma_{2,\eps}^R$, that
\begin{align*}
 \{y \in S_{1,\eps}^c: \dist(y, \Gamma^R_{1,\eps}) = 1\} &= \{y \in \overline{S_2}: \dist(y, \Gamma_{1,\eps}^R) = 1\} \\
&= \{y \in \overline{S_2}: y= x_\eps +  \nu^\eps(x_\eps), \ x_\eps \in \Gamma_{1,\eps}^R\} = \Gamma_{2,\eps}^R,
\end{align*}
and completes the proof.

\smallbreak

Let us now prove property (C). That any point at minimal distance from $S_{1,\eps}$ stays on $\Gamma_{1,\eps}^R$ is a consequence of Lemma \ref{lem: uniqueness y0} for $\eps=0$; the case $\eps>0$ small follows by continuity of $F_{1,\eps}$, and recalling that $\eta$ has compact support. Take $y \in \overline{S_2} \cap \{ \dist(z, \Gamma_{1,\eps}^R) = 1\}$. To prove the uniqueness of the projection, suppose by contradiction that there exist two points $x_1$ and $x_2$ in $\Gamma_{1,\eps}^R$ such that $|x_1-y| = |x_2-y|=1$. Since our argument is local in nature, it is not restrictive to suppose that we chose $R<1/2$ from the beginning, and hence in particular $|x_1-x_2|<1$.

Let $\Pi$ be the plane containing $x_1,x_2$ and $y$, and let $\gamma$ be the arc of the curve $\Gamma_{1,\eps}^R \cap \Pi$ connecting $x_1$ and $x_2$. The basic idea which we develop in what follows is that the existence of both $x_1$ and $x_2$ is forbidden by the fact that, thanks to \eqref{eq:principalcurvatures}, the curvature at every point of $\gamma$ is smaller than $1$. 

Since $\Gamma_{1,\eps}^R$ is a graph of a function of $x_N$,
\begin{equation}\label{pt1}
\text{also $\gamma$ can be seen as the graph of a function of $x_N$ for $\eps$ small enough.}
\end{equation} 
Also, since the principal curvatures of $\pa S_1$ are all smaller than $1-\delta$ on $\Gamma_{1}^R$, for $\eps$ small enough the principal curvatures of $\pa S_{1,\eps}$ are all smaller than $1-\delta/2)$ on $\Gamma_{1,\eps}^R$. Combining this with the fact that $x_1$ is a projection of $y$ onto $S_{1,\eps}$, it follows the existence of $r>0$ small (possibly depending on $\eps$) such that 
\begin{equation}\label{pt2}
B_r(x_1) \cap \Gamma_{1,\eps}^R \cap B_{1}(y) = \{x_1\}.
\end{equation}
Moreover, 
\begin{equation}\label{pt3}
\text{the (planar) curvature of $\gamma$ is also smaller than $1-\delta/2$.}
\end{equation}

Collecting together \eqref{pt1}, \eqref{pt2}, \eqref{pt3}, we are in position to apply \footnote{after a translation and a possible rotation} Lemma \ref{lem: curvatura piana} to the curve $\Gamma$ on the plane $\Pi$, deducing that $\Gamma$ cannot meet $B_{1}(y)$ in any other point than $x_1$, in contradiction with the existence of $x_2$. 

It remains to show that $y= x_\eps +  \nu^\eps(x_\eps)$ for some $x_\eps \in \Gamma_{1,\eps}^R$. Having proved the uniqueness of the projection, this follows directly from \cite[Theorem 4.8-(2)]{Fed} and the smoothness of $\Gamma_{1,\eps}^R$.
\end{proof}

Lemma \ref{lemma:dist_perturbedsupp} is crucial since it allows us to produce a family of admissible variations of the minimizer $\mf{u}$ in the following way. For $i\in \{1,2\}$, let $u_{i,\eps}\in H^1(S_{i,\eps})$ be such that
\[
\begin{cases}
\Delta u_{i,\eps}=0 & \text{ in } S_{i,\eps}\\
u_{i,\eps}=u_i & \text{ on } \partial S_{i,\eps}\backslash \Gamma^{R}_{i,\eps}=\partial S_i\backslash \Gamma_{i}^R\\
u_{i,\eps}=0 & \text{ on } \Gamma_{i,\eps}^R
\end{cases}
\]
extended by zero to  $\Omega\backslash S_{i,\eps}$. Observe that $S_{i,\eps}=\{x\in \Omega:\ u_{i,\eps}(x)>0\}$, and that for $\eps \ge 0$ small the vector $(u_{1,\eps}, u_{2,\eps},u_3,\ldots, u_k)$ belongs to the set $H_\infty$ --- defined in \eqref{def H infty}--- by Lemma \ref{lemma:dist_perturbedsupp}.

\begin{proposition}\label{prp shape derivative}
We have
\begin{align}
	&\left. \frac{d}{d \eps} \int_\Omega |\nabla u_{1,\eps}|^2 \right|_{\eps = 0^+} = - \int_{\Gamma_{1}^R} \eta(x) (\partial_{\nu_1}u_1)^2,  \label{eq:derivative_on_S1}\\	
	&\left. \frac{d}{d \eps} \int_\Omega |\nabla u_{2,\eps}|^2 \right|_{\eps = 0^+} = \int_{\Gamma_{2}^R} \eta(x+\nu_2(x)) (\partial_{\nu_2}u_2)^2.\label{eq:derivative_on_S2}
\end{align}
\end{proposition}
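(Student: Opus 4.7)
The plan is to recognize both identities as one-sided Hadamard-type shape derivative formulas applied to the one-parameter families $(S_{i,\eps})_{\eps \ge 0}$ and to the associated harmonic extensions $u_{i,\eps}$. Each $u_{i,\eps}$ solves a pure Dirichlet problem: it is harmonic inside $S_{i,\eps}$, its boundary data are fixed (and equal to $u_i$) on $\partial S_i \setminus \Gamma_i^R$, and they are identically zero on the moving part $\Gamma_{i,\eps}^R$. Under these conditions, the Hadamard formula proved in the Appendix reduces, at $\eps = 0^+$, to
\[
\frac{d}{d\eps}\int_{S_{i,\eps}} |\nabla u_{i,\eps}|^2 \bigg|_{\eps=0^+} = -\int_{\Gamma_i^R} (V_i \cdot \nu_i)\,(\partial_{\nu_i} u_i)^2,
\]
where $V_i$ is the initial velocity of $F_{i,\eps}$ restricted to $\Gamma_i^R$; the integral localizes there because $F_{i,\eps}$ coincides with the identity outside a neighbourhood of $\Gamma_i^R$. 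Since $u_{i,\eps}$ vanishes outside $S_{i,\eps}$, $\int_\Omega$ and $\int_{S_{i,\eps}}$ coincide. Thus the whole task reduces to computing $V_i \cdot \nu_i$ on $\Gamma_i^R$.

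For $F_{1,\eps}$ this is immediate from the definition: $V_1(x) = \eta(x)\nu_1(x)$ on $\Gamma_1^R$, hence $V_1 \cdot \nu_1 = \eta$, and substitution gives \eqref{eq:derivative_on_S1}. For $F_{2,\eps}$, I parametrize $\Gamma_{2,\eps}^R$ by $y_\eps(x) = x + \eps\eta(x)\nu_1(x) + \nu^\eps(x)$ with $x \in \Gamma_1^R$, so that $y_0(x) = x + \nu_1(x) \in \Gamma_2^R$ and, thanks to \eqref{eq:relation_between_normals}, $\nu_2(y_0(x)) = -\nu_1(x)$. Differentiating in $\eps$,
\[
V_2(y_0(x)) = \eta(x)\nu_1(x) + \left.\frac{d\nu^\eps(x)}{d\eps}\right|_{\eps=0}.
\]
By \eqref{eq:derivative_normal_orthogonal} the second summand is orthogonal to $\nu_1(x)$, so
\[
V_2(y_0(x)) \cdot \nu_2(y_0(x)) = \eta(x)\,\nu_1(x)\cdot(-\nu_1(x)) = -\eta(x) = -\eta\bigl(y_0(x) + \nu_2(y_0(x))\bigr),
\]
where I used the inverse relation $x = y_0(x) + \nu_2(y_0(x))$, again from \eqref{eq:relation_between_normals}. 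Pushing this forward along the diffeomorphism $y_0 \colon \Gamma_1^R \to \Gamma_2^R$ and inserting into the Hadamard formula yields \eqref{eq:derivative_on_S2}, the positive sign in the statement being produced by the minus sign in $V_2 \cdot \nu_2$, which encodes the fact that the deformation shrinks $S_2$ rather than expanding it.

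The main obstacle I expect is the bookkeeping for the second identity: one has to combine the orthogonality property \eqref{eq:derivative_normal_orthogonal}, the relation $\nu_1(x) = -\nu_2(x+\nu_1(x))$, and the reparametrization of $\Gamma_{2}^R$ through $y_0$ to extract both the magnitude and the sign of $V_2 \cdot \nu_2$. The remaining input, namely the classical Hadamard formula with homogeneous Dirichlet condition on the moving boundary, is the content of the Appendix; its one-sided version at $\eps = 0^+$ applies here because the maps $(\eps, x) \mapsto F_{i,\eps}(x)$ are $\mathcal{C}^1$ on $[0,\bar\eps) \times \R^N$ and the hypersurfaces $\Gamma_{i,\eps}^R$ are smooth for $\eps$ small, so no extra regularity issue arises.
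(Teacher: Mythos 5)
Your proposal is correct and follows essentially the same route as the paper: both identities are obtained from the localized Hadamard formula of the Appendix (Lemma \ref{lemma:shapederivative_boundary}), with the normal component of the initial velocity computed via the definition of $F_{1,\eps}$ for the first identity, and via the orthogonality \eqref{eq:derivative_normal_orthogonal} together with the relation \eqref{eq:relation_between_normals} for the second. Your intermediate computation $V_2\cdot\nu_2=-\eta$ is the correct one (the paper's displayed text states $\langle Y_2(y),\nu_2(y)\rangle=\eta(y+\nu_2(y))$, an evident sign slip, since $\nu_1(y+\nu_2(y))=-\nu_2(y)$), and it combines with the minus sign in the Hadamard formula to give the stated positive sign in \eqref{eq:derivative_on_S2}.
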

\begin{proof}The identity  \eqref{eq:derivative_on_S1} is a direct consequence of Lemma \ref{lemma:shapederivative_boundary} in the appendix, with $S:=S_1$ and $\omega=B_R(x_0)$, since
\[
Y_1:= \left. \frac{d}{d\eps}F_{1,\eps}(x) \right|_{\eps=0}  =\eta(x) \nu_1(x).
\]

As for \eqref{eq:derivative_on_S2}, we apply the same lemma with $S=S_2$ and $\omega=B_{y_0}^R$. We have
\[
Y_2(y):= \left.\frac{d}{d\eps} F_{2,\eps} (y) \right|_{\eps=0} = \eta(y+\nu_2(y)) \nu_1(y+\nu_2(y)) + \left. \frac{d}{d\eps}\nu^\eps(y+\nu_2(y)) \right|_{\eps=0}
\]
for every $y\in \Gamma_{2}^R$. Recalling \eqref{eq:relation_between_normals} and taking into account \eqref{eq:derivative_normal_orthogonal}, we have
\[
\left\langle \left. \frac{d}{d\eps}\nu^\eps(y+\nu_2(y)) \right|_{\eps=0}  ,\nu_2(y) \right \rangle = \left\langle \left. \frac{d}{d\eps}\nu^\eps(y+\nu_2(y)) \right|_{\eps=0}  ,-\nu_1(y+\nu_2(y))\right\rangle=0.
\]
Therefore, using \eqref{eq:relation_between_normals} once again, $\langle Y_2(y), \nu_2(y) \rangle = \eta(y+\nu_2(y))$, 
and \eqref{eq:derivative_on_S2} follows by Lemma \ref{lemma:shapederivative_boundary}. 
\end{proof}

\begin{proof}[Proof of Theorem \ref{thm:curvaturerelations}] Without loss of generality we work in the case $i=1$ and $j=2$, and use the notations previously introduced. Take, for $\eps\ge 0$ small, the vector $(u_{1,\eps}, u_{2,\eps},u_3,\ldots, u_k)$, which by Lemma \ref{lemma:dist_perturbedsupp} belongs to the set $H_\infty$. Since $u_{1,0}=u_1$ and $u_{2,0}=u_2$, then by the minimality of $\mf{u}$ we have that
\[
\left.\frac{d}{d\eps} J_\infty(u_{1,\eps},u_{2,\eps},u_3,\ldots, u_k) \right|_{\eps=0^+} \ge 0.
\] 
By Proposition \ref{prp shape derivative}, this is equivalent to
\[
\int_{\Gamma_{1}^R} \eta(x) (\partial_{\nu_1}u_1)^2 \le \int_{\Gamma_{2}^R} \eta(x+\nu_2(x)) (\partial_{\nu_2}u_2)^2.
\]
This identity holds true for every nonnegative $\eta\in \mathcal{C}^\infty_{\rm c}(B_R(x_0))$. In particular, by taking $\eta=\eta_\delta$ such that $\eta_\delta(x)=1$ for $x\in B_{R-2\delta}(x_0)$ and $\eta_\delta(x)=0$ in $B_{R}(x_0)\backslash B_{R-\delta}(x_0)$, and by making $\delta\to 0$, we can easily conclude that
\[
\int_{\Gamma_{1}^R} (\partial_{\nu_1}u_1)^2\le \int_{\Gamma_{2}^R} (\partial_{\nu_2}u_2)^2.
\]
Arguing exactly in the same way, but deforming first $\Gamma_{2,R}$, and afterwards $\Gamma_{1,R}$, we can prove that also the opposite inequality holds, and hence
\[
\int_{\Gamma_{1}^R} (\partial_{\nu_1}u_1)^2=\int_{\Gamma_{2}^R} (\partial_{\nu_2}u_2)^2.
\]
Therefore
\begin{equation}\label{eq:curvaturerelations_aux1}
\frac{\fint_{\Gamma_{1}^R} (\partial_{\nu_1}u_1)^2}{\fint_{\Gamma_{2}^R} (\partial_{\nu_2}u_2)^2}= \frac{|\Gamma_{2}^R|}{|\Gamma_{1}^R|},
\end{equation}
and we can thus end the proof by applying \cite[Lemma 9.3]{CaPaQu}, which states that the right-hand-side of \eqref{eq:curvaturerelations_aux1} tends to the right-hand-side of \eqref{free-bound cond} as $R\to 0$. We point out that, with respect to \cite{CaPaQu}, the modulus is present in our formula \eqref{free-bound cond}. This is only a consequence of the different convention that we adopted regarding the sign of the curvatures.
\end{proof}


\section{Existence and properties of solutions to problem (B)}\label{sec:ProblemB}

We focus now on problem (B). It is convenient to restate the problem as follows. Letting, for all $\bu \in H^1_0(\Omega; \R^k)$,
\[
J(\bu) = F\left(\int_\Omega|\nabla u_1|^2, \dots, \int_\Omega |\nabla u_k|^2 \right),
\]
we define
\begin{equation}\label{eqn eig dist}
c:= \inf_{ \bu \in H_{\infty}} J(\bu)
\end{equation}
where
\[
H_{\infty}=\left\{\mf{u} = (u_1,\dots,u_k) \in H^1(\Omega,\R^k)\left| \begin{array}{c} \dist(\supp u_i,\supp u_j)\ge 1\quad  \forall i\neq j \\ 
\displaystyle \int_\Omega u_i^2 = 1 \ \forall i
\end{array}\right. \right\}.
\]
Clearly, since to each set $\omega_i$ of an element in $\mathcal{P}_{k}$ we can associate an eigenvalue $u_i\in H^1_0(\omega_i)$, we have
\[
c\le 	\inf_{ (\omega_1, \dots, \omega_k) \in \mathcal{P}_{k}(\Omega) } F(\lambda_1(\omega_1), \dots, \lambda_1(\omega_k)).
\]
We show below that these levels coincide.
\subsection{Existence of a minimizer and its first properties}
We first address the problem of existence of optimal partitions, and derive some preliminary properties of the sets composing the minimal solutions. This part is close the results in Section \ref{sec existence} and for this reason we shall only give a brief sketch of the methodology.

We consider the auxiliary problem: for any $\bu \in H^1_0(\Omega, \R^k)$ we let
\begin{equation}\label{eqn eig penalized}
J_{\beta}(\bu) = F\left(\int_{\Omega} |\nabla u_1|^2, \dots, \int_{\Omega} |\nabla u_k|^2 \right) + \sum_{1 \le i < j \le k} \iint_{\Omega \times \Omega} \beta \mathds{1}_{B_1}(x-y) u_i^2(x) u_j^2(y) \, dx\, dy.
\end{equation}
We have, similarly to Theorem \ref{prop:existence}:

\begin{theorem}\label{thm eig existence}
	For every $\beta>0$, there exists a nonnegative minimizer $\mf{u}_{\beta}=(u_{1,\beta},\ldots, u_{k,\beta})$ of $J_{\beta}$ in the set
	\begin{equation}\label{eqn L2 constraint}
	H:= \left\{ \mf{u}=(u_{1},\ldots, u_{k}) \in H^1_0(\Omega, \R^k) : \int_{\Omega} u_i^2 = 1 \qquad \forall i = 1, \dots, k \right\}.
	\end{equation}
	There exist $\mu_{1,\beta}, \dots, \mu_{k,\beta}>0$ such that  $\mf{u}_{\beta}$ is a nonnegative solution of
	\begin{equation}\label{eqn system eig pen}
	- \partial_i F\left(\int_{\Omega} |\nabla u_{1}|^2, \dots, \int_{\Omega} |\nabla u_{k}|^2 \right) \Delta u_{i} = \mu_{i,\beta} u_{i} - \beta u_{i}\sum_{j\neq i} \left(\mathds{1}_{B_r} \star u_j^2 \right).
	\end{equation}
Moreover, the family $\{\bu_{\beta}: \beta >0\}$ is uniformly bounded in $H^1_0 \cap L^\infty(\Omega,\R^k)$,
and	there exists $\mf{u}=(u_{1},\ldots, u_{k}) \in H$ such that:
	\begin{enumerate}
		\item $\bu_{\beta} \to \bu$ strongly in $H^1(\Omega,\R^k)$ as $\beta\to +\infty$, up to a subsequence;
		\item $\dist(\supp u_{i}, \supp u_{j})\ge 1$, for every $i\neq j$, so that $\bu\in H_{\infty}$;
		\item for every $i\neq j$, 
		\[
		\lim_{\beta\to +\infty} \iint_{\Omega\times \Omega} \mathds{1}_{B_1}(x-y) u_{i,\beta}^2 (x)u_{j,\beta}^2(y) \, dx\,dy=0;
		\]
		\item $\bu$ is a minimizer for $c$, defined in \eqref{eqn eig dist}.\end{enumerate}
\end{theorem}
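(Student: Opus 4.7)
The plan is to follow the same strategy used to prove Theorem \ref{prop:existence}: solve the penalized problem $\inf_H J_\beta$ for each finite $\beta$ by the direct method of the calculus of variations, then send $\beta\to+\infty$ and identify the limit as a minimizer of \eqref{eqn eig dist}. Compared with problem (A) there are two new features. First, the admissible set $H$ carries the $L^2$-normalization constraint $\int u_i^2 = 1$, so Lagrange multipliers $\mu_{i,\beta}$ will appear in the Euler--Lagrange system and they must be controlled uniformly in $\beta$. Second, $F$ is a general cost function rather than a sum of Dirichlet integrals, so the weak lower semicontinuity of $\mf{u} \mapsto F(\int|\nabla u_1|^2,\dots,\int|\nabla u_k|^2)$ has to be deduced from the continuity of $F$ combined with its componentwise monotonicity.

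For the existence at fixed $\beta$, I would take a minimizing sequence and invoke the coercivity hypothesis on $F$ (divergence to $+\infty$ as any single component diverges) together with $\int u_i^2 = 1$ to obtain an $H^1_0$ bound. Compactness of $H^1_0(\Omega) \hookrightarrow L^2(\Omega)$ preserves the normalization in the limit, and the lower semicontinuity argument above together with Fatou's lemma for the interaction term yields a minimizer $\mf{u}_\beta$. Nonnegativity follows by replacing each component by its absolute value, which leaves every term of $J_\beta$ invariant. The Lagrange multiplier rule yields the system \eqref{eqn system eig pen}; testing against $u_{i,\beta}$ itself gives the explicit representation
\[
\mu_{i,\beta} = \partial_i F(\cdots)\int_\Omega |\nabla u_{i,\beta}|^2 + \beta \sum_{j\ne i}\iint_{\Omega\times\Omega} \mathds{1}_{B_1}(x-y)\,u_{i,\beta}^2(x)\,u_{j,\beta}^2(y)\,dx\,dy,
\]
which is positive because $\partial_i F > 0$ and $\int|\nabla u_{i,\beta}|^2 \ge \lambda_1(\Omega) > 0$.

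For the uniform estimates, I would compare $\mf{u}_\beta$ with any competitor $\mf{v} \in H_\infty$ (on which the penalty term vanishes), obtaining $J_\beta(\mf{u}_\beta) \le J(\mf{v})$. Taking the infimum over $H_\infty$ gives $J_\beta(\mf{u}_\beta) \le c$, whence coercivity of $F$ bounds each Dirichlet energy uniformly and the interaction term is bounded, leading directly to (3) and, via Fatou, (2). The $L^\infty$ bound is the point where the argument most clearly departs from problem (A), where it followed immediately from the boundary data via the maximum principle. Here the identity above, combined with the uniform bound on the interaction and with the fact that $(\int|\nabla u_{i,\beta}|^2)_i$ lies in a compact subset of $(\R^+)^k$ on which $\partial_i F$ is bounded above and bounded below by a positive constant, shows that $\mu_{i,\beta}/\partial_i F(\cdots)$ is uniformly bounded. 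Consequently $\mf{u}_\beta$ satisfies a subsolution inequality of the form $-\Delta u_{i,\beta} \le C\,u_{i,\beta}$, and a Brezis--Kato/Moser iteration provides the uniform $L^\infty$ bound. This step is the main obstacle.

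Finally, to pass to the limit $\beta\to+\infty$, I would use weak $H^1$ convergence together with strong $L^2$ convergence (from Rellich) to preserve the normalization, and the vanishing of the interaction term with Fatou to place $\mf{u}$ in $H_\infty$. The chain
\[
c \le J(\mf{u}) \le \liminf_{\beta\to+\infty} J(\mf{u}_\beta) \le \limsup_{\beta\to+\infty} J_\beta(\mf{u}_\beta) \le c
\]
forces all inequalities to be equalities, yielding the minimality of $\mf{u}$. Using that $\partial_i F > 0$ is continuous and that $F$ is strictly increasing in each variable, one then deduces from $\lim F(\int|\nabla u_{i,\beta}|^2)_i = F(\int|\nabla u_i|^2)_i$ the convergence of each individual Dirichlet energy, which combined with weak convergence upgrades to strong $H^1$ convergence. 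Thus the principal conceptual difficulty is the uniform control of the Lagrange multipliers $\mu_{i,\beta}$, which has no analogue in problem (A) and rests crucially on the strict positivity of $\partial_i F$ together with the variational bound on the interaction.
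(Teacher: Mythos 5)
Your proposal is correct and follows essentially the same route as the paper: direct method with lower semicontinuity from the componentwise monotonicity of $F$, the uniform bound $J_\beta(\mf{u}_\beta)\le c$ via a competitor in $H_\infty$, two-sided bounds on $\partial_i F$ from $\lambda_1(\Omega)\le \int_\Omega|\nabla u_{i,\beta}|^2\le M$, the tested-equation identity to control the Lagrange multipliers $\mu_{i,\beta}$ uniformly, Brezis--Kato for the $L^\infty$ bound, and the same chain of inequalities (plus strict monotonicity of $F$) to pass to the limit and upgrade to strong $H^1$ convergence. You also correctly identify the uniform control of the multipliers as the genuinely new point relative to problem (A), which is exactly what the paper singles out.
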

\begin{proof}
All the listed properties can be shown by very similar arguments of Theorem \ref{prop:existence}, we shall only consider here those that are new. In particular, we focus on the uniform bounds on $\{\mf{u}_{\beta}\}$.

The existence of a nonnegative minimizer $\mf{u}_{\beta}$ for $J_{\beta}$ on $H$ is given by the direct method of the calculus of variations ($J$ is lower-semicontinuous because $F$ is component-wise increasing). Since $H_{\infty}$ is not empty, it contains a smooth function $\mf{v}=(v_1,\ldots, v_k)$. Thus, $J_{\beta}(\mf{u}_{\beta}) \le c \le J(\mf{v})<+\infty$ for every $\beta >0$, and this implies that $\{\mf{u}_{\beta}, \beta>0\}$ is bounded in $H^1_0$.  Notice also that, by definition,
\[
	\int_\Omega |\nabla u_{i,\beta}|^2 \ge \lambda_1(\Omega) \qquad \text{for any $i = 1, \dots, k$ and $\beta >0$}.
	\]
Therefore, by the assumptions on $F$, there exists $a > 0$ such that
\[
a < \partial_i F\left(\int_{\Omega} |\nabla u_{1,\beta}|^2, \dots, \int_{\Omega} |\nabla u_{k,\beta}|^2 \right) < \frac{1}{a} \qquad  \text{for any $i = 1, \dots, k$ and $\beta >0$}.
\]
It follows, by the method of the Lagrange multipliers, that any minimizer $\mf{u}_{\beta}$ is a weak solution to \eqref{eqn system eig pen}. Testing such equations by $\mf{u}_{\beta}$ itself and using the uniform bound on $J_\beta(\mf{u}_{\beta})$, we obtain that the exists $\mu > 0$ such that
	\[
	0 < \mu_{i,\beta} < \mu \quad  \text{for any $i = 1, \dots, k$ and $\beta >0$}. 
	\]
	The proof of the uniform $L^\infty$ bounds is then a rather standard consequence of the Brezis-Kato iteration technique, since $-\Delta u_{i,\beta} \leq \mu u_{i,\beta}$. 
The remaining properties can be shown reasoning exactly as in the proof of Theorem \ref{prop:existence}.
\end{proof}

The previous result shows the existence of minimizers for problem $c$, in connection with an elliptic system with long-range competition. Since both $H_{\infty}$ and $J$ are invariant under the transformation $(u_1,\ldots, u_k)\mapsto (|u_1|,\ldots, |u_k|)$, we can work from now on, without loss of generality, with nonnegative functions. In what follows, we will show that all the minimizers for $c$ are continuous (actually, we will show that they are Lipschitz continuous in $\Omega$), and this will imply that \eqref{eqn part dist} and \eqref{eqn eig dist} coincide, and there is a one-to-one correspondence between (open) optimal partitions $(\omega_1,\ldots, \omega_k)$ of  \eqref{eqn part dist} and minimizers $\bu$ of \eqref{eqn eig dist}: for every $\bu$ minimizer of $c$, the sets $\omega_{i}=\{u_{i}>0\}$ constitute an optimal partition at distance 1 of $\Omega$.


\subsection{Proof of Theorems \ref{thm: properties minimizers} and \ref{thm: improved reg} for problem (B)}

By following exactly the same lines of the proof of Theorem \ref{thm: properties minimizers}, (1)--(2)--(3), (5)--(6) for problem (A), we can show the exact same properties for any minimizer $\mf{u}$ of the level $c$.  

Regarding the regularity of the eigenfunctions, using the notations of Section \ref{sec:Properties_of_minimizers}, we observe that $\mf{u}=0$ on $\pa \Omega$, and that $\Omega$  satisfies the $r$-uniform exterior sphere condition for some $r>0$. Then the Lipschitz continuity in $\overline{\Omega}$ is a direct application of Theorem \ref{thm: lip general} with $f=\lambda_1(\omega_i) u_i$, $\Lambda=\R^N$ and $A:=A_i$ (this shows Theorem {\ref{thm: improved reg}).

Observe that the continuity of $\mf{u}$ implies that then $\omega_{i}=\{u_{i}>0\}$, $i=1,\ldots, k$ are minimizers for problem (B). Thus $c$ and \eqref{eqn part dist} coincide, and given any optimal partition of \eqref{eqn part dist}, then the conclusions of Theorem \ref{thm: properties minimizers} hold also for the associated eigenvalues $\bu$. 

\subsection{Proof of Theorem \ref{thm:curvaturerelations} for problem (B)}

The proof of this result for problem (B) follows word by word the lines of the proof for problem (A), replacing only Lemma \ref{lemma:shapederivative_boundary} by the classical Hadamard's variational formula \cite[Theorem 2.5.1]{Hen}.

\appendix

\section{Shape Derivatives}\label{appendix}

In this appendix we establish a formula which relates the change of the energy of the harmonic extension of a function $\varphi$, defined on a boundary $\pa S$ and vanishing on a portion $\pa S \cap \omega$ of $\pa S$. The domain variation is localized on $\pa S \cap \omega$. Although similar results are by now well known, and excellent references are available (we refer for instance to \cite[Chapter 5]{HePi}), we could not find exactly the result we needed, and therefore we provide here a short discussion for the sake of completeness.
 
 \medskip
 
Let $S \subset \R^N$ be a open set, and let $\omega\subset \R^N$ be a bounded smooth domain  such that $\partial S\cap \textrm{int}\,(\omega) \neq \emptyset$. For a function $\varphi:\partial S \to \R $ such that  $\varphi \in Lip(\partial S)$ and $\varphi(x) = 0$ if $x \in   \partial S\cap \overline \omega$, we consider its harmonic extension in $S$, that is the function $u \in H^1(S)$ solution to
\[
	\begin{cases}
		\Delta u = 0 &\text{in $S$}\\
		u = \varphi &\text{on $\partial S$}
	\end{cases}\quad  \text{ or, equivalently, }\quad 	\int_{S} |\nabla u| ^2 = \min\left\{ \int_{ S} |\nabla v| ^2 :
	\begin{array}{l}
		v \in H^1( S), \\
		v = \varphi \text{ on $\partial S$}
			\end{array}\right\}.
\]
The question we want to address is how a smooth deformation of a regular part of $\partial S$ where $u=0$ impacts the energy of the corresponding harmonic extension.  We start by analyzing the derivative with respect to a global homotopy $F : [0,T) \times \R^N \to\R^N$, for some $T>0$, 
satisfying:
\begin{enumerate}
\item[(H1)] $t\in [0,T) \mapsto F(t,\cdot)\in W^{1,\infty}(\R^N,\R^N) $ is differentiable at 0;
\item[(H2)] $F(0,\cdot)=\id$;
\item[(H3)] $F(t,x)=x$ for every $t\in [0,T)$, $x\in \partial S\backslash \omega$.
\end{enumerate} 
For notation convenience, we let $F_t(x) = F(t,x)$, 
while $DF_t(x):=D_xF(t,x)$. We can assume that $T>0$ is sufficiently small so that $D_xF(t,x)$ is an invertible matrix for $(t,x) \in [0,T[\times \R^N$. Moreover, we define
\[
Y= F'_0:=\left. \frac{d}{d t}F_t(\cdot) \right|_{t=0}\in W^{1,\infty}(\R^N,\R^N),
\]
so that, by (H1), $F_t(x)= x+ t Y(x) + \textrm{o}(t)$  in  $W^{1,\infty}(\R^N,\R^N)$, as $t\to 0$.

For every $t \in [0,T)$ we let $S_t = F_t(S)$ and $\Gamma_t=F_t(\partial S\cap \omega)$. Let $u_t \in H^1(S_t)$ be such that
\[
	\begin{cases}
		\Delta u_t = 0 &\text{in $S_t$}\\
		u = \varphi &\text{on $\partial S\backslash \omega$}\\
		u = 0 & \text{on $\Gamma_t$}
	\end{cases} \ \ \text{that is}\ \ 	I_t := \int_{S_t} |\nabla u_t| ^2 = \min\left\{ \int_{S_t} |\nabla v| ^2 :
	\begin{array}{l}
		v \in H^1(S_t), \\
		v = \varphi \text{ on $\partial S \backslash \omega$}, \\
		v = 0 \text{ on $\Gamma_t$}
	\end{array}\right\}
\]
\begin{lemma}\label{lem first shape}
Under the previous assumptions, the function $I_t$ is differentiable at $t=0$, with
\[
	\left. \frac{d}{d t} I_t \right|_{t = 0} = \int_{S} \langle (\div Y\, \id - 2 D Y) \nabla u, \nabla u\rangle
\]
\end{lemma}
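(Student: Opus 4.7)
The plan is to pull everything back to the fixed domain $S$ via the diffeomorphism $F_t$, so that $I_t$ becomes an integral on $S$ of an energy with $t$-dependent coefficient matrix.

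First I would set $v_t(x) := u_t(F_t(x))$ for $x \in S$. Because of (H3), on $\partial S \setminus \omega$ the map $F_t$ is the identity, so $v_t = \varphi$ there; on $\partial S \cap \omega$ we have $F_t(\partial S \cap \omega) = \Gamma_t$, where $u_t$ vanishes, so $v_t = 0$ on $\partial S \cap \omega$. The chain rule gives $\nabla u_t(F_t(x)) = (DF_t(x))^{-T}\nabla v_t(x)$, and the change of variables formula yields
\[
I_t = \int_S \langle A_t \nabla v_t, \nabla v_t\rangle\, dx, \qquad A_t(x) := |\det DF_t(x)|\, (DF_t(x))^{-1}(DF_t(x))^{-T}.
\]
Equivalently, $v_t$ minimizes $v \mapsto \int_S \langle A_t \nabla v, \nabla v\rangle$ among $H^1(S)$ functions with the fixed boundary data $\varphi$ on $\partial S \setminus \omega$ and $0$ on $\partial S \cap \omega$, so in particular $v_0 = u$.

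Next I expand $A_t$ at $t=0$. From (H1)--(H2), $DF_t = \id + t\, DY + o(t)$ in $L^\infty$, hence $(DF_t)^{-1} = \id - t\, DY + o(t)$, $(DF_t)^{-T} = \id - t\, DY^T + o(t)$, and $|\det DF_t| = 1 + t\, \div Y + o(t)$. Multiplying these expansions gives
\[
A_t = \id + t\,\bigl(\div Y\,\id - DY - DY^T\bigr) + o(t) \quad \text{in } L^\infty(\R^N, \R^{N\times N}).
\]

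The remaining step is to differentiate $I_t = \int_S \langle A_t \nabla v_t, \nabla v_t\rangle$ at $t=0$. A standard application of the implicit function theorem to the linear elliptic problem satisfied by $v_t$ (with $t$-smooth coefficients $A_t$ and fixed boundary data) gives that $t \mapsto v_t$ is differentiable at $0$ in $H^1(S)$, with derivative $v'_0 \in H^1_0(S)$ (since the boundary data are independent of $t$). Then
\[
\left. \frac{d}{dt} I_t\right|_{t=0} = \int_S \langle A'_0 \nabla u, \nabla u\rangle + 2\int_S \langle \nabla u, \nabla v'_0\rangle.
\]
Since $u$ is harmonic in $S$ and $v'_0$ vanishes on $\partial S$, an integration by parts shows the cross term is zero. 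Finally, the identity $\langle DY^T \nabla u, \nabla u\rangle = \langle DY\,\nabla u, \nabla u\rangle$ collapses $A'_0 = \div Y\,\id - DY - DY^T$ to the desired $\div Y\,\id - 2\,DY$ inside the quadratic form.

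The main technical point is justifying the $H^1$-differentiability of $t \mapsto v_t$ (equivalently the smooth dependence of $u_t \circ F_t$ on $t$); this can be avoided by an envelope-type argument, using $u$ as an upper-bound competitor in the minimization defining $I_t$ and a matching lower bound via a first-order Taylor expansion of $A_t$ around $\id$ together with coercivity, but the direct implicit function theorem proof is the cleanest.
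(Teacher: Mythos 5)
Your proposal follows essentially the same route as the paper: pull back to the fixed domain $S$ via $F_t$, expand $A_t = \id + t\left(\div Y\,\id - DY - DY^T\right) + o(t)$, differentiate the quadratic form, kill the cross term using harmonicity of $u$ together with $v_0'\in H^1_0(S)$, and use symmetry of the quadratic form to replace $DY+DY^T$ by $2\,DY$. The only point where the paper's argument differs is the justification of the $H^1$-differentiability of $t\mapsto v_t$: since (H1) only gives differentiability of $t\mapsto F_t$ at $t=0$ (not $C^1$ dependence on a neighborhood, as the implicit function theorem would require), the paper establishes this step directly by showing that the difference quotients $(v_t-v_0)/t$ converge strongly in $H^1_0(S)$ to the solution $w_0$ of $-\Delta w_0=\div(A_0'\nabla u)$ with zero boundary data, via an energy estimate obtained by subtracting the two equations and testing with the difference.
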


\begin{proof} \textbf{Step 1:}  Fixing the domain through a change of variables. For any $t \in [0,T[$, let $v_t \in H^1(S)$ be defined as $v_t := u_t \circ F_t$. Observe that for every $v\in H^1( S_t)$ one has
\[
\int_{S_t}|\nabla v(y)|^2\, dy=\int_{F_t( S)}|\nabla v(y)|^2\, dy=\int_{S} |[(DF_t(x))^{-1}]^T\nabla (v(F_t(x))|^2 \det(DF(x)) \, dx.
\]
Thus $v_t$ is the minimizer of
\[
	I_t = \min\left\{ \int_{S} \det(D F_t ) |[(D F_t)^{-1}]^T \nabla w|^2  :
	\begin{array}{l}
		w \in H^1(S), \\
		w = \varphi \text{ on $\partial S$}
	\end{array}\right\}
\]
(recall that $\varphi=0$ on $\partial S\cap \omega$)
and a solution to the problem
\[
	\begin{cases}
		-\div(A_t \nabla v_t) = 0 &\text{in $S $}\\
		v_t = \varphi &\text{on $\partial S$}
	\end{cases}
\]
with $A_t(x) = \det(D F_t(x)) (D F_t(x))^{-1}[(D F_t(x))^{-1}]^T$. Observe that $A_t(x)$ is symmetric and there exist $0 < \lambda < \Lambda$ such that
\[
\lambda |\xi|^2 \le \langle A_t(x) \xi, \xi \rangle \le \Lambda |\xi|^2 \qquad \text{for all $x \in \R^N, t \in [0,T), \xi \in \R^N$};
\]
the map	$t\in [0,T)\mapsto A_t\in L^\infty(\R^N)$ is differentiable at $t=0$,   and
$		\lim_{t \to 0} A_t =A_0 =\id$ uniformly in $\R^N$;
		and by  recalling that $Y := F'_0$, we have by Jacobi's formula
\[
\left. \frac{d}{d t}A_t(x)\right|_{t = 0}=\div Y \,\id - (D Y + D Y^T) \qquad \text{uniformly in $\R^N$. }
	\]

\noindent \textbf{Step 2:}  Differentiability of the map $t\in [0,T) \mapsto v_t\in H^1(S)$ at $t=0$. We introduce the incremental quotients
\[
	w_{t,0} := \frac{v_t - v_0}{t-0}=\frac{v_t-u}{t} \in H_0^1( S), \qquad t\in ]0,T[.
\]
Each $w_{t,0}$ is a solution to
\begin{equation}\label{eqn wt}
	\begin{cases}
		-\div(A_t \nabla w_{t,0}) = \div\left( \frac{A_t - \id}{t} \nabla u\right) &\text{in $S$}\\
		w_{t,0} = 0 &\text{on $\partial S$}.
	\end{cases}
\end{equation}
We introduce the function $w_0 \in H^1_0(S)$ solution to
\begin{equation}\label{eqn w0}
	\begin{cases}
		-\Delta w_0 = \div(A_0' \nabla u) &\text{in $ S$}\\
		w_0 = 0 &\text{on $\partial S$}.
	\end{cases}
\end{equation}
and show that indeed $w_{t,0}\to w_0$ as $t\to 0$, strongly in $H^1_0(S)$, so that $t \mapsto v_t$ is differentiable at $t=0$, with $v_0'=w_0$. To do this, we subtract \eqref{eqn w0} from \eqref{eqn wt} and obtain the identity
\[
	-\div(A_t \nabla(w_{t,0} -w_0)) = \div((A_t - A_0) \nabla w_0) + \div\left( \left(\frac{A_t - A_0}{t} - A_0'\right) \nabla v_0\right)
\]
Testing this equation by $w_{t,0} - w_0\in H^1_0(S)$, we can conclude that
\[
	\left(\int_{S} |\nabla (w_{t,0} - w_0)|^2\right)^\frac12\le \frac{1}{\lambda} \left( \|A_t-A_0\|_{\infty} \|w_0\|_{H^1} +  \left\|\frac{A_t - A_0}{t} - A_0'\right\|_{\infty} \|v_0\|_{H^1} \right)
\]
and the claim follows recalling the properties of the functions $A_t$. 

\smallbreak

\noindent \textbf{Step 3:}  Differentiability of the map $t\in [0,T)\mapsto I_t\in \R$ at $t=0$. As a result of the previous step, the derivative of $I_t$ at $t=0$ is equal to
\[
 \lim_{t \to 0}  \int_{S} \left( \left\langle \frac{A_t - A_0}{t} \nabla v_t+ A_0 \nabla w_{t,0}, \nabla v_t \right\rangle + \left\langle A_0 \nabla v_0, \nabla w_{t,0} \right\rangle \right)= \int_{S} \langle A_0' \nabla u, \nabla u\rangle + 2 \int_{S}\langle \nabla w_0, \nabla u\rangle\\
\]
By testing the equation of $u$ by $w_0\in H^1_0(S)$, we see that the last term in the previous expression is zero, and by exploiting the symmetry of the scalar product we obtain
\[
	\left. \frac{d}{d t} I_t \right|_{t = 0} = \int_{S} \langle A_0' \nabla u, \nabla u\rangle   = \int_{S} \langle \left(\div Y \id - 2 D Y \right) \nabla u, \nabla u\rangle \qedhere
\]
\end{proof}

We now show that, if $F_t$ leaves invariant a neighborhood of $\partial S \backslash \omega$, then the derivatives in Lemma \ref{lem first shape} can be expressed only in terms of the value of the first order behavior of $F$ around $\partial S \cap \omega$.
\begin{lemma}\label{lemma:shapederivative_boundary}
Assume $(H1)$,$(H2)$, and instead of $(H3)$ assume the stronger condition 
\begin{itemize}
\item[(H3')] $F(t,x)=x$ for every $t\in [0,T)$, $x\in S\backslash \omega' $, for some $\omega'\Subset  \omega$;
\end{itemize} 
and assume also that $\partial S\cap \omega$  is a smooth hypersurface Then we have
\[
	\left. \frac{d}{d t} I_t \right|_{t = 0} =- \int_{\omega \cap \partial S}  (Y \cdot \nu) (\partial_\nu u)^2
\]
In particular, the first derivative of the energy at 0, $I'_0$, depends on $F_t$ only through the value of $Y = F'_0 $ over $\omega\cap \partial S$.
\end{lemma}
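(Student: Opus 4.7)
The plan is to transform the volume expression provided by Lemma \ref{lem first shape},
\[
\left.\frac{d}{dt} I_t\right|_{t=0} = \int_S \langle (\div Y \, \id - 2DY)\nabla u, \nabla u \rangle,
\]
into a surface integral on $\partial S \cap \omega$ by writing the integrand as the divergence of an explicit vector field and applying the divergence theorem, exploiting crucially the harmonicity of $u$ together with the fact that $u$ vanishes on $\partial S \cap \omega$.

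The central step I would carry out is to introduce the vector field
\[
V := |\nabla u|^2 Y - 2(Y\cdot\nabla u)\nabla u
\]
on $S$, and verify the pointwise identity
\[
\div V = |\nabla u|^2 \div Y - 2\langle DY \nabla u, \nabla u\rangle - 2(Y\cdot \nabla u)\Delta u.
\]
This follows by expanding the divergence componentwise and observing that the two second-order contributions, $\nabla|\nabla u|^2 \cdot Y$ (produced by differentiating the first summand of $V$) and $2\nabla(Y\cdot\nabla u)\cdot\nabla u$ (from the second), cancel after using the symmetry of the Hessian $\partial_{ij}u = \partial_{ji}u$ and a relabelling of indices. Since $\Delta u = 0$ in $S$, the integrand appearing in Lemma \ref{lem first shape} agrees pointwise with $\div V$ on $S$.

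Next I would apply the divergence theorem on $S$. Hypothesis (H3') gives that $Y$, and hence $V$, is compactly supported in $\omega' \Subset \omega$, so the only possible boundary contribution comes from $\partial S \cap \omega$. Since $\partial S \cap \omega$ is a smooth hypersurface on which $u = \varphi = 0$, standard elliptic regularity up to the boundary yields $u \in C^\infty$ in a neighborhood of $\overline{\omega'}\cap \overline S$, which is enough to justify the integration by parts for the compactly supported field $V$. On $\partial S \cap \omega$, the vanishing of $u$ forces the tangential gradient to vanish, so $\nabla u = (\partial_\nu u)\nu$, and substituting yields
\[
V\cdot \nu = (\partial_\nu u)^2 (Y\cdot \nu) - 2(Y\cdot \nu)(\partial_\nu u)^2 = -(Y\cdot\nu)(\partial_\nu u)^2,
\]
which gives exactly the claimed formula. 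The main technical point is the cancellation in the divergence identity above, which is elementary but index-heavy; observe how the harmonicity of $u$ is essential, as otherwise one would retain a volumetric term involving $\Delta u$.
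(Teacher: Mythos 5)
Your proof is correct and is essentially the paper's argument in repackaged form: the divergence identity for $V=|\nabla u|^2\,Y-2(Y\cdot\nabla u)\nabla u$ encodes exactly the two integrations by parts the paper performs after testing the harmonic equation with $Y\cdot\nabla u$, and the boundary term $V\cdot\nu=(Y\cdot\nu)|\nabla u|^2-2(Y\cdot\nabla u)(\nu\cdot\nabla u)$ is the same one that appears there. The justification via (H3') for the localization of the boundary contribution and the use of $\nabla u=(\partial_\nu u)\nu$ on $\{u=0\}$ also match the paper.
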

\begin{proof}
Observe that the assumptions imply that $Y\in W^{1,\infty}(\R^N)$ satisfies $Y=0$ in $S\setminus  \omega$. Moreover, since $u$ is harmonic in $S$, $u\in H^2(O)$, for every $O\Subset \omega\cap S$. Thus we can test the equation of $u$ with  $Y \cdot \nabla u\in H^1(S)$, obtaining
\begin{multline*}
	0 = \int_{ S} \nabla u \cdot \nabla (Y \cdot \nabla u) - \int_{\omega \cap \partial  S} (Y \cdot \nabla u)( \nu \cdot \nabla u )\\
	= \int_{\omega \cap S} \left(\langle \nabla u, D Y \nabla u \rangle +  \langle \nabla u,  D^2 u  Y \rangle \right)- \int_{ \omega \cap \partial S} (Y \cdot \nabla u)( \nu \cdot \nabla u )\\
	=  \int_{\omega \cap S} \left( \langle \nabla u, D Y \nabla u \rangle +  \frac12 \langle \nabla |\nabla u|^2,Y\rangle \right) - \int_{\omega \cap \partial S} (Y \cdot \nabla u)( \nu \cdot \nabla u )
\end{multline*}
(the boundary term is well defined since $\omega \cap \partial S$ is a smooth hypersurface). A further integration by parts and the observation that, since $u=0$ on $\omega \cap \partial S$, we have $|\nabla u| = |\partial_\nu u|$ and $\nabla u= (\nu \cdot \nabla u )\nu$ on $\omega \cap \partial S$, yields the identities 
\[
	\int_{\omega \cap S}\langle (\div Y \,\id - 2 D Y) \nabla u, \nabla u\rangle = \int_{\omega \cap \partial S}  \left( (Y \cdot \nu) |\nabla u|^2- 2(Y \cdot \nabla u)( \nu \cdot \nabla u )\right)=-\int_{\omega \cap \partial S}  (Y \cdot \nu) (\partial_\nu  u)^2.\qedhere
\]
\end{proof}

\bigbreak

\textbf{Acknowledgments.} The authors are partially supported by the ERC Advanced Grant 2013 n. 339958 ``Complex Patterns for Strongly Interacting Dynamical Systems - COMPAT''.

N. Soave is partially supported by the PRIN-2015KB9WPT\texttt{\char`_}010 Grant: ``Variational methods, with applications to problems in mathematical physics and geometry".

H. Tavares is partially supported by FCT - Portugal through the project PEst-OE/EEI/LA0009/2013.

A. Zilio is partially supported by the ERC Advanced Grant 2013 n. 321186 ``ReaDi -- Reaction-Diffusion Equations, Propagation and Modelling''.

\end{document}